\documentclass[12pt, onside]{amsart}
\usepackage[margin=1in]{geometry}

\usepackage[T1]{fontenc}
\usepackage{amsmath,amssymb,mathtools,amsthm}
\usepackage{enumitem}
\usepackage{microtype}
\usepackage{tikz}
\usetikzlibrary{arrows.meta}
\usepackage[hidelinks]{hyperref}

\newcommand{\R}{\mathbb{R}}
\newcommand{\N}{\mathbb{N}}
\newcommand{\Nzero}{\mathbb{N}_0}
\newcommand{\eps}{\varepsilon}
\DeclareMathOperator{\diam}{diam}
\DeclareMathOperator{\conv}{conv}

\theoremstyle{plain}
\newtheorem{theorem}{Theorem}[section]
\newtheorem{proposition}[theorem]{Proposition}
\newtheorem{lemma}[theorem]{Lemma}
\newtheorem{corollary}[theorem]{Corollary}
\theoremstyle{definition}
\newtheorem{definition}[theorem]{Definition}
\theoremstyle{remark}
\newtheorem{remark}[theorem]{Remark}
\newtheorem{claim}[theorem]{Claim}

\newcommand{\lp}{\left(}
\newcommand{\rp}{\right)}
\newcommand{\lb}{\left[}
\newcommand{\rb}{\right]}

\numberwithin{equation}{section}

\title[Arithmetic-progression gap sets]{Arithmetic-progression gap sets in Cantor sets}

\author{S. Sandberg-Clark}
\address{Department of Mathematical Sciences, United States Military Academy at West Point}
\email{samantha.sandbergclark@westpoint.edu}

\author{K. Taylor}
\address{Department of Mathematics, The Ohio State University}
\email{taylor.2952@osu.edu}

\author{A. Yavicoli}
\address{Department of Mathematics, The University of British Columbia}
\email{yavicoli@math.ubc.ca}

\date{}

\begin{document}

\begin{abstract}
We address the question of which common differences can arise in arithmetic progressions contained in fractal sets. For a compact set $C\subset\mathbb R$, we investigate not only whether arithmetic progressions occur in $C$, but the full collection of their common differences. More generally, for a finite pattern $P$, we study the set of scales at which affine copies of $P$ appear in $C$.

For affine self-similar sets satisfying strong separation, 
we obtain explicit restrictions on admissible common differences.
 Specializing to middle-$\varepsilon$ Cantor sets, we prove that the longest arithmetic progression has length four whenever $3-2\sqrt2<\varepsilon\le 1/3$, showing that the maximal progression length drops immediately from six at the critical parameter $\varepsilon=3-2\sqrt2$.
We further develop recursive bounds for the sets of admissible common differences and derive explicit blackout intervals, namely ranges of scales for which arithmetic progressions cannot occur.

On the positive side, sufficiently thick Cantor sets exhibit the opposite behavior. Combining a refinement of the Hunt-Kan-Yorke construction with the Newhouse Gap Lemma, we prove that every sufficiently small common difference occurs in a three-term arithmetic progression. 
In particular, if the largest bounded gap of a Cantor set is at most $0.067\,\diam(C)$ and its thickness is at least $6.96268\ldots$, then every common difference in $(0,0.435\,\diam(C)]$ occurs in a three-term arithmetic progression contained in $C$.
Analogous interval results are obtained for four-term arithmetic progressions and asymmetric three-point patterns.
\end{abstract}

\subjclass[2020]{Primary 28A80; Secondary 11B25, 28A78}
\keywords{Cantor sets, arithmetic progressions, Newhouse thickness, pattern scales, Hunt--Kan--Yorke theorem}

\maketitle

The occurrence of finite configurations in large or structured sets is a
central theme in additive combinatorics and fractal geometry.  In the
integer setting, largeness is often expressed through density.  For compact
subsets of Euclidean space, Hausdorff dimension, Fourier decay, and Newhouse
thickness provide different quantitative notions of size.  The present paper
asks not only whether a compact set contains a prescribed configuration, but
also at which scales that configuration occurs.

For a compact set $C\subset\R$ and an integer $k\ge2$, define the 
\emph{arithmetic-progression gap set}
\begin{equation}\label{eq:def-gap-set}
G^k_{AP}(C)
:=\bigl\{t>0:\ \exists x\, \in\R\text{ with }
 x,x+t,\ldots,x+(k-1)t\in C\bigr\},
\end{equation}
where we refer to $t\in G^k_{AP}(C)$ as a \textit{common difference}. 
More generally, let
$$
P=\{0=p_1<p_2<\cdots<p_k=1\}\subset[0,1]
$$
be a normalized finite pattern.  
Its \emph{scale set} in $C$ is
\begin{equation}\label{eq:def-pattern-scale}
G_P(C):=\{s>0:\ \exists x\, \in\R\text{ such that }x+sP\subset C\}.
\end{equation}
Here $s$ is the diameter of the embedded copy.  We write
$$
\delta_j(P)=p_j-p_{j-1},\qquad
\delta_{\min}(P)=\min_{2\le j\le k}\delta_j(P),\qquad
\delta_{\max}(P)=\max_{2\le j\le k}\delta_j(P).
$$
For
$$
\mathcal A_k=\left\{0,\frac1{k-1},\ldots,1\right\},
$$
the two conventions satisfy
\begin{equation}\label{eq:AP-pattern-conversion}
G_{\mathcal A_k}(C)=(k-1)G^k_{AP}(C).
\end{equation}
For asymmetric three-point patterns we use
$$
P_\theta=\{0,\theta,1\},\qquad 0<\theta<1.
$$

The scale-set problem is an affine-configuration analogue of the distance-set
problem.  Falconer's conjecture asks whether $\dim_H E>d/2$ forces the
distance set $\Delta(E)=\{|x-y|:x,y\in E\}$ to have positive Lebesgue
measure \cite{FalconerDistance1985}; Mattila and Sj\"olin proved nonempty
interior under the stronger condition $\dim_H E>(d+1)/2$
\cite{MattilaSjolin1999}.  Arithmetic progressions impose a simultaneous
multipoint relation, and dimensional information alone does not determine
their existence.  Keleti constructed full-dimensional compact subsets of
$\R$ avoiding prescribed three-point similarity patterns
\cite{keleti2008construction}.  Yavicoli strengthened this to positive
measure for gauges arbitrarily close to the ambient-dimensional gauge while
avoiding countably many linear patterns \cite{Yavicoli_Avoiding}.  Shmerkin
constructed full-dimensional Salem sets without nontrivial three-term
arithmetic progressions \cite{Shmerkin_2016}.  Conversely, sufficiently
strong dimensional and Fourier-decay estimates do force three-term
progressions \cite{laba_2009}.  More recently, Carnovale and Senger studied
Lebesgue- and Hausdorff-size conclusions for the set of three-term
progression step sizes under Fourier-analytic hypotheses
\cite{CarnovaleSenger}.  Our emphasis is complementary: we seek exact or
recursive information for specific Cantor sets, explicit blackout intervals,
and guaranteed intervals of scales arising from self-similarity and
thickness.  These results motivate the use of geometric structure, and
especially thickness, rather than dimension alone.

Arithmetic progressions in middle Cantor sets and in homogeneous
self-similar sets have also been studied directly.  Chaika proved that the
middle-$N$th Cantor set contains arithmetic progressions of length at least a
constant multiple of $N/\log_2 N$ \cite{ChaikaMiddleNth}.  Broderick,
Fishman, and Simmons obtained, for sufficiently small $\eps$, quantitative
bounds of the form
$$\frac{1/\eps}{\log(1/\eps)}\lesssim L_{AP}(C_\eps) \le \frac1\eps+1,$$
where $L_{AP}(E)$ denotes the maximal length of a nontrivial arithmetic
progression in $E$; they also recorded the Newhouse-Gap-Lemma consequence
$L_{AP}(C_\eps)\ge4$ for $0<\eps\le1/3$
\cite{BroderickFishmanSimmons}.  Xi, Jiang, and Pei investigated existence
and maximal-length questions for homogeneous self-similar sets using
multiple $\beta$-expansions and asked whether $L_{AP}(C_\eps)$ is an
increasing staircase function of the contraction ratio
\cite{XiJiangPei}.  
Tian, Lou, and Shang answered that question negatively; in particular, they proved the exact endpoint value
\begin{equation}\label{eq:TLS-endpoint}
 L_{AP}(C_{3-2\sqrt2})=6.
\end{equation}
Thus the strict lower endpoint in our maximal-length-four theorem is essential \cite{TianLouShang}.  
In contrast with this earlier maximal-length literature, the main objects here are the entire gap and scale sets $G^k_{AP}(C)$ and $G_P(C)$, including their recursive structure, blackout intervals, and intervals of admissible scales.

The middle-$\eps$ Cantor set $C_\eps$ is our principal model.  
It is generated by the iterated function system
$$f_0(x)=\lambda x,\qquad f_1(x)=1-\lambda+\lambda x,$$ where $\lambda=(1-\eps)/2$.
In the case where $\eps=1/3$, we get the middle-third Cantor set which illustrates the distinction between distances and progression gaps: $\Delta(C_{1/3})=[0,1]$, whereas
$$G^3_{AP}(C_{1/3})=G^4_{AP}(C_{1/3})
=\{3^{-n}:n\in\N\}.$$

\subsection*{Thickness, higher-order intersections, and the interior problem}
The interior question can be formulated as a uniform higher-order intersection problem.  
Indeed,
\begin{equation}\label{eq:intro-AP-intersection}
t\in G^k_{AP}(C)
\quad\Longleftrightarrow\quad
\bigcap_{j=0}^{k-1}(C-jt)\ne\varnothing,
\end{equation}
and, for a normalized pattern $P=\{p_1,\ldots,p_k\}$,
\begin{equation}\label{eq:intro-pattern-intersection}
s\in G_P(C)
\quad\Longleftrightarrow\quad
\bigcap_{j=1}^k(C-sp_j)\ne\varnothing.
\end{equation}
Thus proving that a scale set has interior requires a single geometric hypothesis to force these intersections for every parameter in an interval, not merely for one exceptional scale.

For two-set intersections, Newhouse thickness provides precisely such a robust mechanism.  
The Newhouse Gap Lemma \cite{newhouse} implies, for
example, that if $\tau(C)\ge1$, then $C\cap(C+t)\ne\varnothing$ for every $0<t<\diam(C)$; equivalently, the two-point scale set contains an interval.
The same lemma can be used more indirectly to produce an individual three-term arithmetic progression in every compact set of thickness at least one: one separates suitable pieces of the set and intersects the set with an appropriate midpoint set.  
See \cite[Proposition~20]{Yavicoli_Survey} and \cite{Sandberg_Taylor} for this idea and its extensions to general three-point configurations.  
Stronger thickness hypotheses also force larger prescribed finite patterns \cite{yavicoli_patterns}.  
These results, however, are primarily existence statements and do not by themselves give a uniform interval of admissible scales.

The theorem of Hunt, Kan, and Yorke gives a route from two-set to higher-order intersections: under suitable thickness assumptions, the intersection of two interleaved Cantor sets contains a compact subset of positive thickness \cite{hunt_kan_yorke}.  
For three-term progressions, the natural strategy is to construct
$$K_s\subseteq (C-s)\cap(C+s)$$ with quantitative lower bounds for both $\tau(K_s)$ and $\diam(K_s)$, and then apply the Gap Lemma once more to $K_s$ and $C$.  The qualitative HKY statement is not enough for an explicit interval of parameters, because the second application requires bounds that are uniform in $s$.  
In Section~\ref{sec:HKY section triple intersection} we extract such bounds directly from the initial stage of the HKY construction.  
This yields the explicit three-term interval in Theorem~\ref{thm:hky triple intersection}; a second Gap Lemma argument, applied to the HKY output and a short translate of it, gives the four-term interval in Theorem~\ref{thm:hky quadruple intersection}.

\subsection*{Main results}
The paper develops complementary exclusion and interior results.
\begin{enumerate}[leftmargin=2.4em,label=\textup{(\arabic*)}]
\item (Maximal AP length For $3-2\sqrt2<\eps\le1/3$, the longest arithmetic progression in $C_\eps$ has exactly four terms (Theorem~\ref{thm:maxAP4}).  Together with \eqref{eq:TLS-endpoint}, this shows that the maximal length drops from six at the endpoint to four immediately to its right.

\item (Blackout intervals) For affine self-similar sets satisfying strong separation, a general first-splitting theorem confines $G_P(C)$ to an explicit union of rescaled intervals (Theorem~\ref{thm:general-pattern-blackout}).  
A level-two refinement for $C_\eps$ lowers the threshold obtained from the first-level bound and proves that $G^3_{AP}(C_\eps)$ contains no interval $(0,r)$ whenever
$$\frac{4-\sqrt{13}}3<\eps\le\frac13.$$
The argument also gives explicit blackout intervals in the additional parameter range (see Theorem~\ref{thm:improved-middle-epsilon-blackout}).

\item (Interior of the $3$-AP gap set)
An explicit initial-stage lower bound extracted from the Hunt--Kan--Yorke construction implies that, if $C$ has diameter $d$, largest bounded gap at most $0.067d$, and thickness at least
$(1-0.067)/(2\cdot0.067)$, then
$$ (0,0.435d]\subseteq G^3_{AP}(C) $$
(see Theorem~\ref{thm:hky triple intersection}).

\item (Asymmetric three-point patterns) Using the same assumptions above, for every $0<\theta<1$ we prove both blackout criteria and positive intervals for $G_{P_\theta}(C)$.  
In particular, if $0<\eps\le0.067$, then 
$$(0,0.87]\subseteq G_{P_\theta}(C)$$
(see Corollary~\ref{cor:asymmetric patterns}).

\item (Interior of the $4$-AP gap set)
Combining the same two-set construction with a second application of
the Newhouse Gap Lemma gives
$$(0,0.30d]\subseteq G^4_{AP}(C)$$
when $\tau(C)\ge32.83333\ldots$
(see Theorem~\ref{thm:hky quadruple intersection}).
\end{enumerate}

The novelty and attribution of these results can be summarized as follows.
Prior work on middle and homogeneous self-similar Cantor sets concerns existence and maximal AP length \cite{ChaikaMiddleNth,BroderickFishmanSimmons,XiJiangPei,TianLouShang}.  
Our first two contributions instead determine complete gap sets in the middle-$\eps$ case and develop recursive outer descriptions of the full step-size set, including new explicit blackout intervals.  
The exact value four on $3-2\sqrt2<\eps\le1/3$ complements the endpoint value six in
\cite{TianLouShang}.  
In the positive-scale results, the underlying two-set construction, its qualitative positive-thickness conclusion, the gap-size estimate used in the thickness calculation, and the three-set mechanism are due to Hunt--Kan--Yorke \cite{hunt_kan_yorke}.  What is derived here is an explicit initial-stage lower bound within that construction, together with the numerical three-term, four-term, and asymmetric-pattern consequences.
We do not claim to introduce the general problem of step-size sets; the Fourier-analytic literature includes, in particular, \cite{CarnovaleSenger}.

Figure~\ref{fig:parameter-summary} records the main parameter ranges for
middle-$\eps$ Cantor sets.  Section~\ref{sec:prelim} reviews thickness and proves the qualitative four-point existence result used later.  Sections~\ref{sec:longest-AP} and~\ref{sec:blackout-intervals} contain the exclusion results.  
Section~\ref{sec:HKY section triple intersection} develops the quantitative Hunt--Kan--Yorke input and its applications: the three- and four-term interval theorems and the three-point asymmetric patterns.

\begin{figure}[t]
\centering
\begin{tikzpicture}[x=30.5cm,y=0.72cm,>=Latex,font=\small]
  \def\xmax{0.3333333333}
  \draw[thick,-{Latex[length=2mm]}] (0,0)--(0.35,0) node[right] {$\eps$};
  \foreach \x in {0,0.0153,0.067,0.1314829081,0.1715728753,0.3333333333}
    {\draw (\x,0.08)--(\x,-0.08);}
  \node[below=3pt,anchor=east] at (0,0) {$0$};
  \node[below=13pt] at (0.015,0) {$0.0153$};
  \node[below=3pt] at (0.067,0) {$0.067$};
  \node[below=13pt] at (0.1314829081,0) {$\frac{4-\sqrt{13}}3$};
  \node[below=3pt] at (0.1715728753,0) {$3-2\sqrt2$};
  \node[below=3pt] at (0.3333333333,0) {$\frac13$};
  \draw[line width=2pt] (0,1.0)--(0.0153,1.0);
  \fill (0.0153,1.0) circle[radius=0.8pt];
  \node[anchor=west] at (0.018,1.0) {$G^4_{AP}$ contains $(0,0.30]$};
  \draw[line width=1.2pt] (0,1.8)--(0.067,1.8);
  \node[anchor=west] at (0.070,1.8) {$G^3_{AP}$ contains $(0,0.435]$; all $P_\theta$ contain $(0,0.87]$};
  \draw[line width=1.2pt] (0.1314829081,2.6)--(0.3333333333,2.6);
  \node[anchor=east] at (0.328,3.0) {no interval $(0,r)$ in $G^3_{AP}$};
  \draw[line width=1.2pt] (0.1715728753,3.5)--(0.3333333333,3.5);
  \node[anchor=east] at (0.328,3.9) {maximal AP length $=4$};
\end{tikzpicture}
\caption{Principal conclusions for the middle-$\eps$ family.  
A bar whose left endpoint is $0$ records a conclusion valid for every smaller positive
$\eps$.  
The endpoints shown in the positive-interval results are conservative
rounded constants; strict and non-strict parameter endpoints are as stated in the theorems.}
\label{fig:parameter-summary}
\end{figure}

\section{Preliminaries}\label{sec:prelim}

We review thickness, iterated function systems, and related results, as these notions will be used throughout the paper.
For a nonempty bounded set $E\subset\R$, write $|E|=\diam(E)$.  
A common way to formulate thickness is:

\begin{definition}(bridge thickness)
    Let $C\subset\R$ be a compact set, and let $(G_n)$ be the bounded connected components of the complement, $\R\setminus C$, ordered by nonincreasing length. 
    For $n\in\N$, each bounded gap $G_n$ is removed from a closed parent interval $I_n$, leaving behind two closed children intervals $L_n$ and $R_n$ called bridges, the left and right parts of $I_n\setminus G_n$, respectively. 
    The \textit{Newhouse thickness} of $C$ is defined by 
    \begin{equation*}
        \tau\lp C\rp:= \inf_{n\in\N} \frac{\min\left\{|L_n|,|R_n|\right\}}{|G_n|}.
    \end{equation*}
\end{definition}

As an alternative to bridges and gaps, Hunt--Kan--Yorke defined chunks; a set $P$ is a \emph{chunk} of $C$, written $P\propto C$, if $P=C\cap I$ is a nonempty proper set where $I$ is a closed interval and $d(P,C\setminus P)>0$.

\begin{definition}(chunk thickness)
    Given a compact set $C\subset \R$, we define the thickness of $C$ to be 
    \begin{equation*}
        \tau(C) = \inf_{P\propto C} \frac{|P|}{d(P,C\setminus P)}
    \end{equation*}
    provided $C$ has a chunk. 
    Otherwise, let $\tau(C) = 0$ if $C$ is empty or consists of a single point, and $\tau(C)=\infty$ if $C$ is an interval with positive length. 
\end{definition}

Note that these definitions of thickness are equivalent; see \cite{hunt_kan_yorke} for details.

We often want to know when two sets intersect; a precursor to which is being interleaved.
Two compact sets are \emph{interleaved} if each meets the interior of  the convex hull of the other.
Knowing two sets are interleaved and have sufficiently large thickness is enough to guarantee they intersect:

\begin{proposition}[Newhouse Gap Lemma \cite{newhouse}]
\label{prop:newhouse-gap-lemma}
    If interleaved compact sets $K,L\subset\R$ satisfy $\tau(K)\tau(L)\ge1$, then $K\cap L\ne\varnothing$.
\end{proposition}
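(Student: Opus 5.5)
The plan is to argue by contradiction and use the chunk formulation of thickness. Suppose $K$ and $L$ are interleaved compact sets with $\tau(K)\tau(L)\ge 1$ and $K\cap L=\varnothing$. Since both are compact and disjoint, $d(K,L)>0$. Interleaving gives two facts. First, neither set is a single point, since a single point has empty convex-hull interior. Second, if one of them, say $K$, were an interval, then $L$ would meet $\operatorname{int}\conv(K)\subseteq K$, contradicting disjointness. So both sets are genuine compact sets with gaps, and each has positive, finite thickness (the product condition rules out thickness $0$).

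The central step is a descent through the gaps. Consider the ``active'' gaps: a gap $U$ of $K$ is active if it contains a point of $L$, and similarly for gaps of $L$. Because $L$ meets $\operatorname{int}\conv(K)$ but avoids $K$, some point of $L$ lies in a bounded gap $U$ of $K$. By the symmetric statement, some point of $K$ lies in a bounded gap $V$ of $L$.

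Next I would choose such a pair $(U,V)$ that is \emph{linked}: each contains exactly one endpoint of the other. This is the standard Newhouse configuration. The bridge thickness is the convenient tool here. Let $L_U,R_U$ be the bridges of $K$ adjacent to $U$, so that $|L_U|,|R_U|\ge \tau(K)|U|$, and similarly $|L_V|,|R_V|\ge\tau(L)|V|$. If $U$ and $V$ are linked with, say, $|U|\ge|V|$, the product condition gives
\[
\tau(K)\tau(L)|U||V| \ge |U||V|.
\]
This forces either $|L_U|\ge|V|$ or $|R_V|\ge|U|$. In the first case, the bridge of $K$ adjacent to $U$ on the side where $V$ sticks out is long enough to reach past $V$. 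In the second case, the bridge of $L$ adjacent to $V$ reaches past $U$. Since the bridges contain points of the respective sets near the gap endpoints, and the sets are disjoint, the bridge must itself contain a gap that links with the other gap. This produces a new linked pair $(U',V)$ or $(U,V')$ in which one gap is replaced by a strictly smaller gap from the ordering $(G_n)$; the bridge gaps are chosen among those of smaller length.

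Iterating gives an infinite sequence of linked pairs in which infinitely many gaps of $K$ or of $L$ appear, each distinct. Gap lengths in a compact set tend to zero, so the gaps shrink to length $0$ while staying linked. The linked pairs are nested inside one another's bridge intervals, so their endpoints converge to a common point $x$. Endpoints of gaps of $K$ lie in $K$ and endpoints of gaps of $L$ lie in $L$, so by closedness $x\in K\cap L$, a contradiction.

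The main obstacle is establishing the initial linked pair and making the descent step rigorous. Rather than enumerating configurations, I would first try this choice: among all gaps of $K$ meeting $L$, and gaps of $L$ meeting $K$, take a pair minimizing a suitable quantity, namely the gap of $K$ containing a chosen point of $L$ and the gap of $L$ containing its endpoint. I would then check that disjointness forces linking. A contradiction with minimality would then replace the infinite descent.
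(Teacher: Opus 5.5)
Your proposal is correct. It is the classical linked-gap descent behind the Newhouse Gap Lemma; the paper gives no proof of its own and simply cites Newhouse, whose argument is exactly this one. In the write-up, tighten three points:
\begin{enumerate}
\item \emph{Distinctness.} Replaced gaps need not be strictly shorter, since lengths may tie. Justify that they are distinct by their strictly increasing index in the ordering $(G_n)$: a bridge of $G_n$ contains only gaps $G_m$ with $m>n$.
\item \emph{Initial pair.} No minimization is needed. If $(a,b)$ is the gap of $K$ containing a point of $L\cap\operatorname{int}\conv(K)$, interleaving puts $a$ or $b$ in $\operatorname{int}\conv(L)$. The gap of $L$ containing that endpoint is then bounded and linked with $(a,b)$.
\item \emph{Final contradiction.} Every linked pair satisfies $d(K,L)<\min(|U|,|V|)$. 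Since at least one of the two families of gaps is replaced infinitely often, and distinct gaps have lengths tending to $0$, you get $d(K,L)=0$. This replaces the unneeded "nested bridges" convergence claim.
\end{enumerate}
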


\begin{lemma}[Short translates are interleaved]
\label{lem:short-translate-interleaved}
Let $K\subset\R$ be a nonempty compact set.  If
$0<u<\diam(K)$, then $K$ and $K+u$ are interleaved.
\end{lemma}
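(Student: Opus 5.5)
The plan is to argue directly from the definition: $K$ and $K+u$ are interleaved exactly when each set meets the interior of the convex hull of the other. We set $a=\min K$ and $b=\max K$; both exist because $K$ is compact and nonempty. Then $\conv(K)=[a,b]$ and $\diam(K)=b-a$. The translate $K+u$ is again compact, with $\conv(K+u)=[a+u,b+u]$ and interior $(a+u,b+u)$. The interior of $\conv(K)$ is $(a,b)$, which is nonempty because $b-a=\diam(K)>u>0$.

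Next we produce explicit witnesses using the extreme points. For $K$ meeting $\operatorname{int}\conv(K+u)$, the witness is the point $b\in K$. The strict inequalities $0<u<b-a$ give $a+u<b$, and $u>0$ gives $b<b+u$, so $b\in(a+u,b+u)$. For $K+u$ meeting $\operatorname{int}\conv(K)$, the witness is the point $a+u\in K+u$. Here $u>0$ gives $a<a+u$, and $u<b-a$ gives $a+u<b$, so $a+u\in(a,b)$. Together these two facts are exactly the interleaving condition.

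There is essentially no obstacle in this argument. The one point to watch is that both strict inequalities in $0<u<\diam(K)$ are needed. If $u=\diam(K)$, the two sets meet only at the common endpoint $b=a+u$, which lies on the boundary of each hull rather than in its interior. If $u=0$, the translate coincides with $K$, which is excluded by hypothesis. Hence the strictness in the statement is exactly what makes the two witnesses land in the open intervals.
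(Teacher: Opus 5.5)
Your proposal is correct and is essentially the paper's own proof. The paper likewise writes $\conv(K)=[a,b]$ and uses the same two witnesses, $b\in K\cap(a+u,b+u)$ and $a+u\in(K+u)\cap(a,b)$.
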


\begin{proof}
Write $\conv(K)=[a,b]$.  
Since $a,b\in K$ and $b-a>u$,
$$b\in K\cap(a+u,b+u)
\quad\text{and}\quad
a+u\in(K+u)\cap(a,b).$$
Thus each set meets the interior of the convex hull of the other.
\end{proof}

Consequently, if $C$ is compact, $d=\diam(C)>0$, and $\tau(C)\ge1$, then
Proposition~\ref{prop:newhouse-gap-lemma} applied to $C$ and $C+t$ gives
$d\subseteq G^2_{AP}(C)$.  
A related Gap Lemma argument shows that every compact set of thickness at least one contains every prescribed nondegenerate three-point configuration on the line; see
\cite[Proposition~20]{Yavicoli_Survey} for arithmetic progressions and
\cite{Sandberg_Taylor} for the general three-point statement.  
Quantitative existence results for longer finite patterns under stronger thickness hypotheses are developed in \cite{yavicoli_patterns}.  
These are qualitative or existence-oriented results; our focus is the geometry of the full scale set.

The symmetry of $C_\eps$ gives a useful four-point existence result.

\begin{proposition}[Symmetric four-point patterns]
\label{prop:symmetric-four-point-pattern}
Let $C\subset\R$ be a Cantor set with $\conv(C)=[0,1]$.  
Assume that $C$ is symmetric about $1/2$, that $1/2\notin C$, and that $\tau(C)\ge1$.  
If $g$ is the length of the largest bounded gap of $C$, then for every $u\in[g,1)$ the set $C$ contains an affine copy of
$$\{-1,-u,u,1\}.$$
Equivalently, for
$$Q_u=\left\{0,\frac{1-u}{2},\frac{1+u}{2},1\right\},$$
one has $G_{Q_u}(C)\ne\varnothing$.
\end{proposition}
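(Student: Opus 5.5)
The plan is to center the pattern at the symmetry point $1/2$ and reduce the problem to a two-set intersection, which the Newhouse Gap Lemma (Proposition~\ref{prop:newhouse-gap-lemma}) can settle. An affine copy of $\{-1,-u,u,1\}$ centered at $1/2$ with radius $r>0$ is
$$\{1/2-r,\ 1/2-ur,\ 1/2+ur,\ 1/2+r\}.$$
Since $C$ is symmetric about $1/2$, it suffices to find $r>0$ with $1/2-r\in C$ and $1/2-ur\in C$. The right-hand points then lie in $C$ automatically.

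Since $1/2\notin C$ and $C$ is symmetric, $1/2$ lies in a bounded gap $G=(a,1-a)$ of $C$. Write $h=1/2-a>0$, so $2h=|G|\le g$. Set
$$A=C\cap[0,a],\qquad D=\tfrac12-A\subset[h,\tfrac12].$$
The condition becomes: find $r\in D$ with $ur\in D$, i.e.\ show $D\cap uD\neq\varnothing$. Indeed, a point $y\in D\cap uD$ gives $y=ur$ with $r\in D$, and then $r\ge h>0$. The four points are distinct because $0<g\le u<1$.

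Key steps, in order:
\begin{enumerate}[label=\textup{(\roman*)}]
\item \emph{Thickness of $D$.} Show $\tau(D)=\tau(A)\ge\tau(C)\ge1$. Thickness is invariant under reflection and scaling, so this also gives $\tau(uD)\ge1$ and hence $\tau(D)\tau(uD)\ge1$. The inequality $\tau(A)\ge\tau(C)$ rests on the fact that $A$ is cut from $C$ at the endpoint $a$ of a gap. In the bridge formulation, every gap of $A$ is a gap of $C$ lying in $[0,a]$. The bridges produced when building $A$ can be compared with those of $C$, after checking that no bridge gets shorter once the portion of $C$ to the right of $G$ is discarded.
\item \emph{Interleaving.} We have $\conv(D)=[h,1/2]$ and $\conv(uD)=[uh,u/2]$. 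If $u>2h$, then $uh<h<u/2$, so $h\in D$ lies in the interior of $\conv(uD)$. Likewise $h<u/2<1/2$, so $u/2\in uD$ lies in the interior of $\conv(D)$. Thus the sets are interleaved and Proposition~\ref{prop:newhouse-gap-lemma} gives $D\cap uD\ne\varnothing$.
\item \emph{The boundary case.} If $u=2h$, then $u\ge g\ge2h$ forces $u=g$, with $G$ a largest gap. In this case $u\cdot\tfrac12=h$, and $h\in D$, $\tfrac12\in D$ (since $0,a\in C$). So $h\in D\cap uD$ directly, without the Gap Lemma.
\end{enumerate}

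The main obstacle is step (i), justifying that truncating $C$ at the gap $G$ does not decrease thickness. In the chunk formulation this is delicate: a chunk $P$ of $A$ near $a$ may be separated from $C\setminus P$ only by $G$, so $d(P,C\setminus P)$ and $d(P,A\setminus P)$ can differ. I would therefore argue with bridges instead, checking that each gap of $A$ still has the two bridges it had in $C$. I would verify this by processing gaps in nonincreasing order, noting that every gap of $C$ other than $G$ lies entirely in $[0,a]$ or in $[1-a,1]$. If this comparison turns out to need more care, a fallback is to use symmetry to relate the bridges adjacent to $G$ to the corresponding bridges of $A$.
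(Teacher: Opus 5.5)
Your reduction to $D\cap uD\neq\varnothing$, the interleaving check in (ii), and the boundary case (iii) are all correct. The gap is step (i). The claim $\tau(A)\ge\tau(C)$ is false in general, and your Gap Lemma step needs at least $\tau(A)\ge1$, which can also fail.

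The problem arises when the central gap $G$ is not a largest gap. Take a gap $U\subset(0,a)$ with $|U|>|G|$. In $C$, its right bridge does not stop at $a$: it runs across $G$ into $[1-a,1]$. In $A$, the same bridge is cut off at $a$. So the assertion that each gap of $A$ keeps the bridges it had in $C$ fails exactly for such $U$. Your chunk-formulation worry was the right one. The chunk and bridge definitions of thickness agree, so switching to bridges cannot make the problem disappear.

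Here is a concrete counterexample (before normalizing). Let $C$ be the union of affine copies of the middle-third Cantor set with convex hulls
$[0,1]$, $[2,2.5]$, $[2.95,3.45]$, $[4.45,5.45]$.
\begin{itemize}
\item $C$ is symmetric about $2.725$, and $2.725\notin C$.
\item Its largest bounded gap is $(1,2)$.
\item $\tau(C)=\min\{1,1,0.5/0.45,1\}=1$.
\item $A$ is the part of $C$ in $[0,2.5]$. Its only large gap $(1,2)$ has bridges $[0,1]$ and $[2,2.5]$, so $\tau(A)=1/2$.
\end{itemize}
Hence $\tau(D)\tau(uD)=1/4<1$, and Proposition~\ref{prop:newhouse-gap-lemma} cannot be applied to $D$ and $uD$.

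The fix is not to discard the right half. Apply the Gap Lemma to the full symmetric sets $K=C-\tfrac12$ and $L=u^{-1}K$. By similarity invariance, both have thickness exactly $\tau(C)$; this is the paper's proof.

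Your reduction survives unchanged. Since $K=D\cup(-D)$ and $L=u^{-1}D\cup(-u^{-1}D)$, we have $K\cap L\neq\varnothing$ if and only if $D\cap uD\neq\varnothing$. Concretely, a point $z\in K\cap L$ gives $\pm z,\pm uz\in K$, and $z\neq0$ because $\tfrac12\notin C$.

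Interleaving is as easy as in your (ii):
\begin{itemize}
\item $\tfrac12\in K$ lies in $\bigl(-\tfrac1{2u},\tfrac1{2u}\bigr)$ because $u<1$.
\item $h/u\in L$ lies in $\bigl(-\tfrac12,\tfrac12\bigr)$ exactly when $u>2h$.
\end{itemize}
The paper verifies interleaving instead via "neither set lies in a gap of the other", using $u>g$, and treats $u=g$ separately. Your case $u=2h$, which forces $u=g$ with $G$ a largest gap and yields the pattern $\{0,a,1-a,1\}$, is exactly the paper's exceptional case.
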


\begin{proof}
Put
$$K=C-\frac12,\qquad L=\frac1u\left(C-\frac12\right).$$
Both sets have thickness $\tau(C)$, and
$$ \conv(K)=\left[-\frac12,\frac12\right] \subseteq\left[-\frac1{2u},\frac1{2u}\right]=\conv(L).$$
The set $L$ cannot lie in a bounded gap of $K$ because $|L|=1/u>1=|K|$.
If $u>g$, every bounded gap of $L$ has length at most $g/u<1=|K|$, so $K$ cannot lie in a bounded gap of $L$ either.  
Thus $K$ and $L$ are interleaved, and Proposition~\ref{prop:newhouse-gap-lemma} gives $K\cap L\ne\varnothing$.
For $z\in K\cap L$, the points $\pm z,\pm uz$ lie in $K$; since $0\notin K$, they form the required nondegenerate pattern after translating by $1/2$.

It remains to consider $u=g$.  
The same argument applies unless $K$ is contained in the closure of a bounded gap of $L$.  
In that exceptional case, the gap has length at most $g/u=1$, while $|K|=1$, so its closure must equal $[-1/2,1/2]$.
Scaling back shows that the central gap of $C$ has length $u$.
Its two endpoints, together with $0$ and $1$, form exactly the normalized
pattern $Q_u$.
\end{proof}

\begin{corollary}\label{cor:four-term-AP-middle-epsilon}
For $0<\eps<1$, the set $C_\eps$ contains a four-term arithmetic progression if and only if $\eps\le1/3$.
\end{corollary}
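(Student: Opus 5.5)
We prove the two directions separately. The "if" direction comes from Proposition~\ref{prop:symmetric-four-point-pattern} with $u=1/3$, since $Q_{1/3}=\{0,1/3,2/3,1\}$ is exactly $\mathcal A_4$. The "only if" direction is a direct geometric argument showing that for $\eps>1/3$ no four-term progression fits into $C_\eps$.

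\textbf{Sufficiency.} Let $0<\eps\le 1/3$. First record the properties of $C_\eps$ that the proposition needs.
\begin{enumerate}[label=(\roman*)]
\item $\conv(C_\eps)=[0,1]$, and $C_\eps$ is symmetric about $1/2$ because the IFS is conjugated to itself by $x\mapsto 1-x$.
\item $1/2$ lies in the open central gap $(\lambda,1-\lambda)$, so $1/2\notin C_\eps$.
\item The largest bounded gap is the central one, of length $g=\eps$.
\item The thickness is $\tau(C_\eps)=\lambda/\eps=(1-\eps)/(2\eps)$. Every gap at level $n$ has length $\eps\lambda^{n}$ and bridges $\lambda^{n+1}$, so the ratio is the same at every level. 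This ratio is at least $1$ exactly when $\eps\le 1/3$.
\end{enumerate}
Since $g=\eps\le 1/3<1$, we may take $u=1/3\in[g,1)$. Proposition~\ref{prop:symmetric-four-point-pattern} then gives $G_{Q_{1/3}}(C_\eps)\ne\varnothing$. Because $Q_{1/3}=\mathcal A_4$, identity \eqref{eq:AP-pattern-conversion} shows that $C_\eps$ contains a nondegenerate four-term arithmetic progression.

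\textbf{Necessity.} Let $\eps>1/3$, so $\lambda<\eps$: every gap is longer than the bridges adjacent to it. Suppose $x,x+t,x+2t,x+3t\in C_\eps$ with $t>0$.
\begin{enumerate}[label=(\roman*)]
\item Choose the smallest construction interval $I$ (of some level $n$) containing all four points. By self-similarity, rescale $I$ to $[0,1]$; the rescaled progression is still a four-term progression in $C_\eps$.
\item By minimality, the four points are not all in $[0,\lambda]$ and not all in $[1-\lambda,1]$. Hence some consecutive pair $x+jt,\,x+(j+1)t$ straddles the central gap, which forces $t>\eps$.
\item Pigeonhole on the two halves: four points in two halves means some half contains at least two of them. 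Two points of the same progression in one half are at distance at least $t$, and a half has diameter $\lambda$, so $t\le\lambda$.
\item Combining gives $\eps<t\le\lambda$, contradicting $\lambda<\eps$.
\end{enumerate}

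\textbf{Main obstacle.} Both directions are short once the ingredients are in place. The delicate part is computing the thickness exactly as $\lambda/\eps$, i.e.\ checking that the gap-to-bridge ratio really is constant across all levels. The necessity argument is elementary once the reduction to the top level by self-similarity is done carefully.
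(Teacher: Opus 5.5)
Your proof is correct and follows the paper's argument: sufficiency via Proposition~\ref{prop:symmetric-four-point-pattern} with $u=1/3$, and necessity by pulling back to the first split and comparing the bound $t\le\lambda$ from a pair in one first-level piece with the bound from a consecutive pair crossing the central gap. The paper runs the necessity argument on a three-term sub-progression, which also shows $G^3_{AP}(C_\eps)=\varnothing$. One small correction: a consecutive pair straddling the central gap only gives $t\ge\eps$, not $t>\eps$, because the endpoints $\lambda$ and $1-\lambda$ lie in $C_\eps$; this is harmless since $\lambda<\eps$ is strict.
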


\begin{proof}
If $\eps\le1/3$, then $\tau(C_\eps)\ge1$ and the largest gap has length $\eps$.  Proposition~\ref{prop:symmetric-four-point-pattern}, with $u=1/3$, produces the normalized four-term pattern $\mathcal A_4$.

Conversely, suppose that $C_\eps$ contains a three-term progression.  Pull it back from the smallest construction interval containing it.  At the first level, two consecutive points lie in one interval of length $\lambda$, so the common difference is at most $\lambda$; another consecutive pair crosses the central gap, so it is at least $\eps$.  
Hence $\eps\le\lambda$, equivalently $\eps\le1/3$.  
A four-term progression contains a three-term one, which proves
the converse.
\end{proof}

These results are due to the strong self-similarity properties of the middle-$\eps$ Cantor sets which leads us to consider a more general family of self-similar compact sets constructed using iterated function systems.

\begin{definition}[IFS]\label{def:ifs}
    Fix $I\subset\R$.
    Define the \emph{iterated function system} $\{f_i\}_1^N$ to be a finite family of contractive maps $f_i:I\rightarrow I$ with contraction factor $0<r_i<1$; i.e., $|f_i(x)-f_i(y)|\leq r_i |x-y|$ for all $x,y\in I$, $1\leq i\leq N$.
    The \emph{attractor} of $\{f_i\}$ is the unique nonempty compact set $C\subset I$ satisfying 
    $$C=\bigcup_{i=1}^Nf_i(C).$$
    For a \emph{word} $w=(w_1,\ldots,w_n)\in\{0,1,\ldots,N\}^k$, write $f_w=f_{w_1}\circ\cdots\circ f_{w_k}$, and let $f_\emptyset$ be the identity.

    We say an IFS has \emph{strong separation} if the first-generation pieces $f_i(C)$ are pairwise disjoint. 
\end{definition}

\begin{remark}\label{rmk:ceps ifs}
    In the special case of the middle-$\eps$ Cantor set, the set $C_\eps$ is the attractor of the functions
    $$f_0(x)=\lambda x \quad\text{ and }\quad f_1(x)=1-\lambda+\lambda x,$$
    with domain $[0,1]$ and $\lambda = (1-\eps)/2$. 
    Its level-$n$ construction consists of $2^n$
    intervals of length $\lambda^n$, and
    $$\tau(C_\eps)=\frac{\lambda}{\eps}=\frac{1-\eps}{2\eps}.$$
\end{remark}

The above definition preserves the notion of parent and children intervals mentioned above; for example, $I$ is the parent interval to the children intervals $f_1(I),f_2(I),\ldots,f_N(I)$.

\section{Longest arithmetic progressions in middle-$\eps$ Cantor sets}
\label{sec:longest-AP}

In this section we prove that, for
$\eps\in(3-2\sqrt2,1/3]$, the longest arithmetic progressions in $C_\eps$ have length exactly four.  
The key point is a precise description of the possible midpoints of three-term progressions.

\subsection{The middle-third Cantor set}

\begin{proposition}\label{prop:gapC3}
Let $C_{1/3}$ be the middle-third Cantor set.  Then
$$ G^4_{AP}(C_{1/3})=G^3_{AP}(C_{1/3})=\left\{\frac1{3^n}:n\in\N\right\},$$
and $G^k_{AP}(C_{1/3})=\varnothing$ for every $k\ge5$.  
Moreover, $G^3_{AP}(C_\eps)=\varnothing$ whenever $\eps>1/3$.
\end{proposition}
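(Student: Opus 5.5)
The plan is a reduction by self-similarity to the first level, followed by a short case analysis there.

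\emph{Normalization.} Take a nontrivial $3$-AP $x<x+t<x+2t$ in $C=C_{1/3}$ and let $J=f_w([0,1])$ be the smallest construction interval containing all three points. Pulling back by $f_w^{-1}$ gives a $3$-AP in $C$ with common difference $t'=3^{|w|}t$ whose points do not all lie in one first-level interval $[0,1/3]$ or $[2/3,1]$. Conversely, any AP in $C$ can be pushed forward by $f_w$ and remains in $C$. Hence $G^k_{AP}(C)=\bigcup_n 3^{-n}\,G^{k,*}$ for $k\ge3$, where $G^{k,*}$ is the set of common differences of ``splitting'' $k$-APs, i.e.\ those meeting both first-level pieces.

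\emph{Splitting $3$-APs.} Exactly as in the proof of Corollary~\ref{cor:four-term-AP-middle-epsilon}, two consecutive terms lie in one piece, so $t\le 1/3$, while the other consecutive pair crosses the gap $(1/3,2/3)$, so $t\ge 1/3$. Thus $t=1/3$, and the crossing pair must be exactly $\{1/3,2/3\}$. The AP is therefore $\{0,1/3,2/3\}$ or $\{1/3,2/3,1\}$, both of which lie in $C$. So $G^{3,*}=\{1/3\}$ and $G^3_{AP}(C)=\{3^{-n}:n\ge1\}$.

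\emph{Four or more terms.} The set $\{0,1/3,2/3,1\}$ lies in $C$, so $G^4_{AP}\supseteq\{3^{-n}\}$, and $G^4_{AP}\subseteq G^3_{AP}$ gives equality. For $k\ge5$, suppose a splitting $5$-AP existed. Each of its $3$-term sub-blocks that meets both pieces must, by the previous step, equal one of the two APs above, which forces $t=1/3$. The five terms would then span length $4/3>1$, which is impossible. A general $5$-AP rescales to a splitting one by the normalization step, so $G^k_{AP}(C)=\varnothing$ for every $k\ge5$.

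\emph{The case $\eps>1/3$.} The argument of Corollary~\ref{cor:four-term-AP-middle-epsilon} gives $\eps\le t\le\lambda$ for any splitting $3$-AP, which is impossible when $\eps>\lambda$, i.e.\ when $\eps>1/3$. Combined with the normalization step (which works verbatim for $C_\eps$ with ratio $\lambda$), this shows $G^3_{AP}(C_\eps)=\varnothing$.

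\emph{Main obstacle.} The only delicate point is the normalization. The smallest construction interval containing the AP exists because the points are distinct and construction intervals shrink to zero length. Minimality of that interval is exactly what guarantees that the pulled-back AP splits at the first level.
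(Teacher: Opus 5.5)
Your proof is correct and follows essentially the same route as the paper: pass to the smallest construction interval containing the progression so that it splits at the first level, then squeeze the common difference between the child length and the gap length (both $1/3$ for $C_{1/3}$, and $\lambda<\eps$ when $\eps>1/3$). The only minor difference is in the five-term case. The paper directly compares $t\ge 1/3$, coming from a consecutive pair crossing the gap, with $t\le 1/4$, coming from the span. You instead first pin down $t=1/3$ through a three-term sub-block containing such a crossing pair. Both arguments work.
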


\begin{proof}
For $C_{1/3}$ we have $\lambda=\eps=1/3$, and
$$\lambda^{n-1}\left\{0,\frac13,\frac23,1\right\}\subset C_{1/3}\qquad(n\in\N).$$
Thus $\{3^{-n}:n\in\N\}$ is contained in both
$G^3_{AP}(C_{1/3})$ and $G^4_{AP}(C_{1/3})$.

Conversely, let $a<b<c$ be a three-term arithmetic progression in $C_{1/3}$, and let $I$ be the smallest construction interval containing all
three points.  
Write $I_0,I_1$ for the two children of $I$.  
Their lengths and the length of the gap between them are all $|I|/3$.  
If $a,b\in I_0$ and $c\in I_1$, then
$$b-a\le \frac{|I|}{3}\le c-b.$$
Since the two distances are equal, equality holds throughout, and the common difference is $|I|/3$.  
The case $a\in I_0$ and $b,c\in I_1$ is symmetric.
Thus every common difference is $3^{-n}$ for some $n\in\N$.

The four endpoints of $I_0$ and $I_1$ form a four-term progression with common difference $|I|/3$.  
A five-term progression cannot occur: after rescaling its smallest containing construction interval to $[0,1]$, a first-level split would force its common difference to be at least $1/3$, whereas its total span forces it to be at most $1/4$, a contradiction.

Finally, if $\eps>1/3$, then the first-level gap has length $\eps$ and each first-level interval has length $\lambda<\eps$.  
In any first-level splitting three-term progression, two of the points lie in the same first-level interval, forcing the common difference to be at most $\lambda$, while a consecutive pair crossing the central gap forces it to be at least $\eps$.
This contradiction proves $G^3_{AP}(C_\eps)=\varnothing$.
\end{proof}

\subsection{Midpoints of three-term progressions}
 
Define the midpoint set
$$M_{AP}(C):=\{x\in C:\ \exists t>0\text{ with }x-t,x,x+t\in C\}.$$

\begin{theorem}[Midpoints of three-term progressions]\label{thm:midpoints}
Let $0<\eps\le1/3$.  Then
$$
M_{AP}(C_\eps)
=
\bigcup_{n\in\Nzero}
\ \bigcup_{w\in\{0,1\}^n}
 f_w\left(
 C_\eps\cap
 \left[\frac{1-\lambda}{2},\frac{1+\lambda}{2}\right]
 \right),
$$
where the $f_w$ are as in Remark \ref{rmk:ceps ifs}.
More precisely, the midpoints of the three-term progressions that split
between the two first-level intervals are exactly
$$ C_\eps\cap\left[\frac{1-\lambda}{2},\frac{1+\lambda}{2}\right].$$
\end{theorem}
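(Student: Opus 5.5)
Take a three-term progression $x-t,x,x+t$ in $C_\eps$ and let $I=f_w([0,1])$ be the smallest construction interval containing all three points. Pulling back by the affine bijection $f_w^{-1}$, which maps $C_\eps\cap I$ onto $C_\eps$ and preserves arithmetic progressions, gives a progression in $C_\eps$ that splits between the two first-level intervals $[0,\lambda]$ and $[1-\lambda,1]$. So the global description follows from the "more precisely" statement, applied to progressions that split at the first level, together with the fact that each $f_w$ maps such progressions to progressions. The reverse inclusion is immediate from this as well. Everything therefore reduces to identifying the midpoints of first-level splitting progressions as exactly $C_\eps\cap[\frac{1-\lambda}{2},\frac{1+\lambda}{2}]$.

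\emph{Necessity.} Suppose $a=x-t$, $x$, $c=x+t$ split, so $a\in[0,\lambda]$ and $c\in[1-\lambda,1]$. Then $x=(a+c)/2$ lies in $[\frac{1-\lambda}{2},\frac{1+\lambda}{2}]$, since $a+c\in[1-\lambda,1+\lambda]$. Also $x\in C_\eps$ by assumption. This direction is trivial.

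\emph{Sufficiency.} Fix $x\in C_\eps$ with $|x-\tfrac12|\le\lambda/2$. Since $\tfrac12$ lies in the central gap, $x\ne\tfrac12$; by the symmetry $y\mapsto 1-y$ of $C_\eps$ we may assume $x\in[1-\lambda,\frac{1+\lambda}{2}]$. We need some $a\in C_\eps\cap[0,\lambda]$ with $2x-a\in C_\eps$, i.e. we need
\[
K=C_\eps\cap[0,\lambda]\quad\text{and}\quad L=\bigl(2x-C_\eps\bigr)\cap[0,\lambda]
\]
to intersect. Equivalently, $K$ must meet the reflected set $2x-C_\eps$. The set $K$ is a copy of $C_\eps$ scaled by $\lambda$, so it has thickness $\tau(C_\eps)=\lambda/\eps\ge1$. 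The natural piece of $2x-C_\eps$ to use is $2x-f_1(C_\eps)$, which has convex hull $[2x-1,\,2x-1+\lambda]$; write $J=2x-f_1(C_\eps)$. Since $2x-1\in[1-2\lambda,\lambda]$ and $1-2\lambda=\eps$, this hull lies in $[\eps,\lambda+\lambda]$. Because $J$ is also a scaled copy of $C_\eps$ with the same diameter $\lambda$, its thickness is $\ge1$, and the plan is to apply the Newhouse Gap Lemma (Proposition~\ref{prop:newhouse-gap-lemma}) to $K$ and $J$. Note that $J$ is a translate of $K$ because $C_\eps$ is symmetric: $J=K+(2x-1)$ with shift $u=2x-1\in[0,\lambda]$.

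If $0<u<\lambda=\diam K$, Lemma~\ref{lem:short-translate-interleaved} shows $K$ and $K+u$ are interleaved, so the Gap Lemma gives a point $a'\in K\cap(K+u)$. Unwinding, $a'\in C_\eps\cap[0,\lambda]$ and $2x-a'\in f_1(C_\eps)$. Two edge cases remain. If $u=0$, then $x=\tfrac12$, which is excluded. If $u=\lambda$, then $x=\frac{1+\lambda}{2}$, and the progression $\lambda,\,x,\,1$ works directly, since $\lambda\in C_\eps$, $1\in C_\eps$, and $x-\lambda=1-x$.

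Finally, the progression is nondegenerate ($t>0$) because $a\le\lambda<1-\lambda\le c$. The main obstacle is setting up the correct two sets so that the shift is genuinely shorter than the diameter; the reduction above, using symmetry so that the partner set is a translate of $K$, is what I would try first. The boundary parameter $\eps=1/3$ needs no separate treatment, since thickness $1$ still satisfies the hypothesis of the Gap Lemma.
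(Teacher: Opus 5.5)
Your proof is correct and follows the same route as the paper. Both reduce to first-level splitting progressions by pulling back along $f_w^{-1}$, obtain necessity from the positions of the outer points, and obtain sufficiency from the Newhouse Gap Lemma applied to two affine copies of $C_\eps$. The only real difference is the choice of sets. You write $2x-f_1(C_\eps)$ as the translate $f_0(C_\eps)+(2x-1)$, so interleaving follows at once from Lemma~\ref{lem:short-translate-interleaved} and no assumption $\eps<1/3$ is needed. The paper instead pairs the smaller piece $C_\eps\cap[0,\lambda^2]$ with $C_\eps\cap[1-\lambda,1]$, checks interleaving by hand, and treats $\eps=1/3$ separately.
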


\begin{proof}
Suppose first that $\{x-t,x,x+t\}\subset C_\eps$ splits at the first level.  
By symmetry, assume that $x\in[0,\lambda]$.  
Then
$x-t\in[0,\lambda]$ and $x+t\in[1-\lambda,1]$.  Hence
$$1-\lambda-x\le t\le x,$$
so $x\ge(1-\lambda)/2$.  
Therefore
$$x\in C_\eps\cap\left[\frac{1-\lambda}{2},\lambda\right].$$
Reflecting about $1/2$ gives the corresponding right-hand interval, proving necessity.

Conversely, let
$$x\in C_\eps\cap\left[\frac{1-\lambda}{2},\lambda\right].$$
If $x=(1-\lambda)/2$, take $t=x$.  
Then $x-t=0$ and $x+t=1-\lambda$, so the required progression is immediate.
We may therefore assume $x>(1-\lambda)/2$.  
When $\eps=1/3$, the interval under consideration contains only $x=1/3$, already covered by the boundary case; hence we may also assume $\eps<1/3$, or equivalently $\eps<\lambda$.

Define
$$A_x:=x-\bigl(C_\eps\cap[0,\lambda^2]\bigr),\qquad B_x:=\bigl(C_\eps\cap[1-\lambda,1]\bigr)-x.$$
These are affine copies of $C_\eps$, with
$$\tau(A_x)=\tau(B_x)=\frac{\lambda}{\eps}>1,$$
and
$$\conv(A_x)=[x-\lambda^2,x],
\qquad\conv(B_x)=[1-\lambda-x,1-x].$$
The strict inequality $x>(1-\lambda)/2$ implies that the two convex hulls overlap nontrivially. 
The largest bounded gap of $A_x$ has length $\lambda^2\eps<|B_x|=\lambda$, while the largest bounded gap of $B_x$ has length $\lambda\eps<|A_x|=\lambda^2$.  
Consequently neither set can lie in a bounded gap of the other; the hull overlap excludes the unbounded gaps.
Thus $A_x$ and $B_x$ are interleaved, and applying the Gap Lemma gives $A_x\cap B_x\ne\varnothing$.

For $t\in A_x\cap B_x$ we have
$$ x-t\in C_\eps\cap[0,\lambda^2], \qquad x+t\in C_\eps\cap[1-\lambda,1].$$
The two points lie in different first-generation intervals, so $t>0$.
Thus $x$ is the midpoint of a first-level splitting progression.  
The right-hand half follows by symmetry.

Finally, every three-term progression is contained in a smallest construction interval $f_w([0,1])$, determined by a word $w$. 
Pulling back by the corresponding inverse branch $f_w^{-1}$ produces a first-level splitting progression in $[0,1]$, and applying the branch $f_w$ again recovers the original interval $f_w[0,1]$.
This gives the stated union over words.
\end{proof}

\begin{lemma}[Endpoint localization]\label{lem:left-endpoint-localization}
Assume $3-2\sqrt2<\eps\le1/3$.  Suppose
$$\{x-t,x,x+t\}\subset C_\eps$$ splits at the first level and $$x\in C_\eps\cap\left[\frac{1-\lambda}{2},\lambda\right].$$
Then $x-t\in[0,\lambda^2]$.
\end{lemma}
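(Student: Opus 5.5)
We must show that $x-t$ lies in the left second-level interval $[0,\lambda^2]$ and not in $[\lambda-\lambda^2,\lambda]$. We argue by contradiction: assume $x-t\in[\lambda-\lambda^2,\lambda]$.

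First we use the geometry of the first-level split. We have $x\le\lambda$ and $x+t\ge 1-\lambda$, so
$$t\ge 1-\lambda-x\ge 1-2\lambda=\eps.$$
Then $x-t\le \lambda-\eps$. Also $x\ge(1-\lambda)/2$.

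\emph{Case 1: $x$ in the left second-level interval.} If $x\in[0,\lambda^2]$, then $x-t<x\le\lambda^2$. Because $\eps<1/3$ we have $\lambda^2<\lambda-\lambda^2$, so this contradicts $x-t\ge\lambda-\lambda^2$ directly. (Strictly, $x-t\in[0,\lambda^2]$ then holds trivially, which is the desired conclusion.)

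\emph{Case 2: $x$ in the right second-level interval.} Here $x,x-t\in[\lambda-\lambda^2,\lambda]$, so
$$t\le x-(x-t)\le\lambda^2.$$
Combined with $t\ge\eps$, this forces $\eps\le\lambda^2=(1-\eps)^2/4$. That inequality is equivalent to $\eps^2-6\eps+1\ge0$, i.e.\ $\eps\le 3-2\sqrt2$ (the other root, $3+2\sqrt2$, exceeds $1$). This contradicts the hypothesis $\eps>3-2\sqrt2$.

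Hence $x-t\notin[\lambda-\lambda^2,\lambda]$. Since $x-t\in C_\eps\cap[0,\lambda]$, we conclude $x-t\in[0,\lambda^2]$.

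The key step is Case 2, where the threshold $3-2\sqrt2$ arises from comparing $t\ge\eps$ with $t\le\lambda^2$. The endpoint $\eps=1/3$ needs separate care: there the interval $[(1-\lambda)/2,\lambda]$ reduces to the single point $x=1/3$. Then $x+t\ge 2/3$ gives $t\ge 1/3$, so $x-t\le 0$, hence $x-t=0\in[0,\lambda^2]$.
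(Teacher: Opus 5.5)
Your proof is correct and is essentially the paper's argument. The decisive step is your Case~2: there $x-t,x\in[\lambda-\lambda^2,\lambda]$ forces $t\le\lambda^2$, while crossing the central gap forces $t\ge\eps$, contradicting $\eps>\lambda^2$. The paper omits your Case~1, which is vacuous because $x\ge(1-\lambda)/2>\lambda-\lambda^2$ already places $x$ in the right child, and your separate check at $\eps=1/3$ is also unnecessary, since the Case~2 argument applies there as well.
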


\begin{proof}
The point $x$ belongs to the right child of the left first-level interval, since
$$\frac{1-\lambda}{2}\ge\lambda-\lambda^2.$$
If $x-t$ also belonged to that child, then
$$x-t\in[\lambda-\lambda^2,x),$$ so $t\le\lambda^2$.  
Since the progression splits at the first level,
$x+t\in[1-\lambda,1]$, and $x\le\lambda$ gives $t\ge1-2\lambda=\eps$.
Thus $\eps\le\lambda^2$, contrary to
$$\eps>3-2\sqrt2\quad\Longleftrightarrow\quad\eps>\lambda^2.$$
Hence $x-t$ lies in the other second-level child, namely $[0,\lambda^2]$.
\end{proof}

\begin{theorem}[Sharp four-term threshold]\label{thm:maxAP4}
Let $\eps\in(3-2\sqrt2,1/3]$.  Then the longest arithmetic progression
contained in $C_\eps$ has length exactly four.
\end{theorem}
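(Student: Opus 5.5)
The plan is to prove the two halves of the statement separately: existence of a four-term progression, and non-existence of any five-term progression. Existence is already available. Since $\eps\le1/3$, Corollary~\ref{cor:four-term-AP-middle-epsilon} gives a four-term arithmetic progression in $C_\eps$; concretely, Proposition~\ref{prop:symmetric-four-point-pattern} with $u=1/3$ embeds $\mathcal A_4$. So everything reduces to excluding five-term progressions.

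Suppose $a,a+t,\ldots,a+4t\in C_\eps$ with $t>0$. Take the smallest construction interval $f_w([0,1])$ containing all five points and pull back by $f_w^{-1}$. This produces a five-term progression in $C_\eps$ that splits at the first level, that is, it meets both $[0,\lambda]$ and $[1-\lambda,1]$. Some consecutive pair crosses the central gap, so $t\ge\eps$. Since $C_\eps$ is symmetric about $1/2$, we may reflect and assume that at least three terms lie in $[0,\lambda]$. Then the number $k$ of terms in $[0,\lambda]$ is $3$ or $4$.

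\emph{Case $k=4$.} Four terms lie in an interval of length $\lambda$, so $3t\le\lambda$. Combined with $t\ge\eps$, this gives $3\eps\le(1-\eps)/2$, i.e.\ $\eps\le1/7$. This contradicts $\eps>3-2\sqrt2\approx0.1716$.

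\emph{Case $k=3$.} The terms $a,a+t,a+2t$ lie in $[0,\lambda]$ and $a+3t,a+4t$ lie in $[1-\lambda,1]$.
\begin{enumerate}[label=\textup{(\roman*)}]
\item The sub-progression $\{a+t,a+2t,a+3t\}$ splits at the first level, and its midpoint $x=a+2t$ lies in $[0,\lambda]$.
\item By Theorem~\ref{thm:midpoints}, $x\in C_\eps\cap[(1-\lambda)/2,\lambda]$.
\item Lemma~\ref{lem:left-endpoint-localization} then gives $x-t=a+t\in[0,\lambda^2]$.
\item Since $0\le a<a+t$, both $a$ and $a+t$ lie in $[0,\lambda^2]$, so $t\le\lambda^2$.
\item The hypothesis $\eps>3-2\sqrt2$ is equivalent to $\eps>\lambda^2$. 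Hence $t\le\lambda^2<\eps\le t$, a contradiction.
\end{enumerate}

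The mirrored cases $k=1,2$ are handled by the same reflection. Therefore no five-term progression exists, and the maximal length is exactly four.

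I expect the main obstacle to be the case $k=3$. The crude first-level bounds alone do not exclude it, which is why the argument must pass through the midpoint description of Theorem~\ref{thm:midpoints} and the second-level localization of Lemma~\ref{lem:left-endpoint-localization}. This is exactly where the strict lower bound $\eps>3-2\sqrt2$ is used. The bookkeeping that also needs care is checking that the reflection really reduces the problem to $k\in\{3,4\}$, and that the pulled-back progression genuinely splits at the first level so that $t\ge\eps$ holds.
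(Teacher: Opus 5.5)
Your proof is correct and follows the paper's argument. You pull back to a first-level split and reflect so that the middle term $a+2t$ lies in $[0,\lambda]$; your case $k=3$ is then exactly the paper's case where $x+t$ lies in the right interval, handled the same way (Theorem~\ref{thm:midpoints} plus Lemma~\ref{lem:left-endpoint-localization} on the crossing triple, giving $\eps\le t\le\lambda^2<\eps$). The only differences are minor: for four terms on the left you use the direct count $\eps\le t\le\lambda/3$, i.e.\ $\eps\le 1/7$, where the paper applies the localization lemma to $\{x,x+t,x+2t\}$, and you get $a\in[0,\lambda^2]$ from $0\le a<a+t$ rather than from a second localization with common difference $2t$.
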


\begin{proof}
Corollary~\ref{cor:four-term-AP-middle-epsilon} gives a four-term
progression, so it remains to exclude five-term progressions.

Suppose, to the contrary, that
$$a,a+t,a+2t,a+3t,a+4t\in C_\eps$$
for some $t>0$.  By passing to the smallest construction interval containing the progression and applying its inverse branch, we may assume that the progression splits at the first level.  
Set $x=a+2t$.  
The triple $$\{x-2t,x,x+2t\}$$ then splits at the first level.  
By symmetry and Theorem~\ref{thm:midpoints}, we may assume
$$x\in C_\eps\cap\left[\frac{1-\lambda}{2},\lambda\right].$$
Lemma~\ref{lem:left-endpoint-localization}, applied with common difference $2t$, gives $$x-2t\in[0,\lambda^2].$$
Also $x+2t$ lies in the right first-level interval.

There are two cases.  
If $x+t$ lies in the right first-level interval, then
$\{x-t,x,x+t\}$ splits at the first level, so Lemma \ref{lem:left-endpoint-localization} gives $x-t\in[0,\lambda^2]$.  
Since we already know $x-2t\in[0,\lambda^2]$, the two consecutive points $x-2t$ and $x-t$ lie in an interval of length $\lambda^2$; hence
$$ t=(x-t)-(x-2t)\le\lambda^2.$$
On the other hand, $x+t\ge1-\lambda$ and $x\le\lambda$, so $t\ge\eps$, contradicting $\eps>\lambda^2$.

If $x+t$ lies in the left first-level interval, then
$\{x,x+t,x+2t\}$ splits at the first level.  Its midpoint $x+t$ lies in the
left midpoint interval, and the localization lemma gives
$x\in[0,\lambda^2]$.  
This contradicts
$$x\ge\frac{1-\lambda}{2}>\lambda^2,$$
where the strict inequality holds for every $\lambda<1/2$.

Thus no five-term progression exists.  
By the nesting $G^{k+1}_{AP}(C)\subseteq G^k_{AP}(C)$, no longer progression exists either, and the maximal length is four.
\end{proof}

\section{Blackout intervals for pattern scale sets}
\label{sec:blackout-intervals}

This section considers when attractors of iterated function systems contain scale sets
$$G_P(C)=\{s>0:\exists x\in \R\text{ such that } x+sP\subset C\},$$
and arithmetic-progression gap sets
$$G_{AP}^k(C)= \{t>0:\exists x\in \R \text{ with } x, x+t, \ldots,x+(k-1)t\in C\},$$ as in equation \eqref{eq:def-pattern-scale} and \eqref{eq:def-gap-set}, respectively.

A copy $x+sP$ {first splits} at level $n+1$ when it lies completely in one level-$n$ interval but intersects at least two children of that interval. 

An open interval disjoint from $G_P(C)$ will be called a \emph{blackout interval}.  
The preceding notation permits a single first-splitting argument for arithmetic and asymmetric patterns.

\begin{theorem}[General pattern blackout bound]\label{thm:general-pattern-blackout}
Let $C$ be the attractor of an affine IFS with $M\ge2$,
$$ f_i(x)=r_i x+a_i,\qquad 1\le i\le M,\qquad 0<r_i<1,$$ with strong separation, $\conv(C)=[0,1]$, and
$$f_i([0,1])=[a_i,b_i],\qquad r_i=b_i-a_i,$$
where
$$0=a_1<b_1<a_2<b_2<\cdots<a_M<b_M=1.$$
Set
$$g_{\min}:=\min_{2\le j\le M}(a_j-b_{j-1}).$$
Let $P=\{0=p_1<\cdots<p_k=1\}$ be a normalized $k$-point pattern, and write $\delta_{\min}=\delta_{\min}(P)$ and $\delta_{\max}=\delta_{\max}(P)$.  Then
$$ G_P(C)\subseteq
\bigcup_{n\in\Nzero} \ \bigcup_{(i_1,\ldots,i_n)\in\{1,\ldots,M\}^n} (r_{i_1}\cdots r_{i_n})[L_P,U_P],$$
where
$$L_P:=\frac{g_{\min}}{\delta_{\max}}$$
and
$$U_P:=
\begin{cases}
\displaystyle
\min\left\{
1,
\frac{\sum_{i=1}^M r_i}{\delta_{\min}(k-M)},
\frac{\max_{1\le i\le M}r_i}
{\delta_{\min}(\lceil k/M\rceil-1)}
\right\},&k>M,\\[3ex]
1,&2\le k\le M.
\end{cases}
$$
\end{theorem}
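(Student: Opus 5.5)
Fix $s\in G_P(C)$ and $x$ with $x+sP\subset C$. The plan is to pull the copy back to the level where it first splits and then bound the normalized scale at that level.

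\textbf{Reduction to a first split.} Since $s>0$ and strong separation holds, the level-$n$ construction intervals $f_{i_1}\circ\cdots\circ f_{i_n}([0,1])$ have lengths $r_{i_1}\cdots r_{i_n}\to0$. Hence there is a largest $n\ge0$ such that $x+sP$ lies in a single level-$n$ interval $J=f_w([0,1])$, with $w=(i_1,\dots,i_n)$. By maximality, the copy meets at least two children of $J$. Apply the inverse affine map $f_w^{-1}$, which scales by $1/(r_{i_1}\cdots r_{i_n})$ and maps $C\cap J$ onto $C$. This produces a copy $y+\sigma P\subset C$ with $\sigma=s/(r_{i_1}\cdots r_{i_n})$ that meets at least two first-level intervals $[a_i,b_i]$. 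It therefore suffices to show that $\sigma\in[L_P,U_P]$.

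\textbf{Lower bound.} Among the consecutive points $y+\sigma p_{j-1}<y+\sigma p_j$, some pair lies in different first-level intervals. The whole open segment between them cannot meet $C$ in a way that avoids a gap: they are separated by at least one full gap $(b_{i},a_{i+1})$. Hence $\sigma\delta_j(P)\ge g_{\min}$, and so $\sigma\ge g_{\min}/\delta_{\max}=L_P$.

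\textbf{Upper bounds.} The bound $\sigma\le1$ is immediate, since the copy has diameter $\sigma$ and lies in $[0,1]$.

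For $k>M$, let $n_i$ be the number of pattern points in $[a_i,b_i]$, so that $\sum_i n_i=k$. Every point of $C$ lies in some first-level interval. Two consecutive pattern points in the same interval are at distance at least $\sigma\delta_{\min}$. Summing over intervals with $n_i\ge1$ gives $r_i\ge(n_i-1)\sigma\delta_{\min}$. This inequality also holds trivially when $n_i=0$, because its right side is then negative. Summing over $i$,
\[
\sum_i r_i\ \ge\ \sigma\delta_{\min}\sum_i(n_i-1)\ \ge\ \sigma\delta_{\min}(k-M),
\]
which gives the second term in $U_P$.

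For the third term, use pigeonhole: some $i$ has $n_i\ge\lceil k/M\rceil\ge2$. Then $\max r_i\ge r_i\ge(\lceil k/M\rceil-1)\sigma\delta_{\min}$, which gives the third bound. For $k\le M$ the statement only claims $\sigma\le1$, and that was already shown. Finally, undo the rescaling: $s=(r_{i_1}\cdots r_{i_n})\sigma$ lies in the stated union.

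\textbf{Main obstacle.} The delicate step is the reduction, and specifically the claim that every copy has a first splitting level $n$. This needs the copy to have positive diameter, and it needs $f_w^{-1}(C\cap J)=C$, which is exactly where strong separation is used. A secondary care point is the sum bound when some $n_i=0$; the crude inequality $(n_i-1)\le r_i/(\sigma\delta_{\min})$ handles that case uniformly.
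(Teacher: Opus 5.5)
Your proposal is correct and follows essentially the same route as the paper. You reduce to a first-level splitting copy via inverse branches (your ``largest $n$'' is the paper's terminating iteration), get $L_P$ from a consecutive pair straddling a first-level gap, and get the upper bounds from per-interval counting plus pigeonhole; your inequality $r_i\ge(n_i-1)\sigma\delta_{\min}$ is just the rearranged form of the paper's $\#A_i\le\lfloor r_i/(\delta_{\min}s)\rfloor+1$.
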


\begin{proof}
Let $D_P$ be the set of scales $s\in G_P(C)$ for which some copy $x+sP\subset C$ is split at the first level.  
Every copy either is already split or lies in a single first-level interval.  In the latter case, applying
the inverse branch produces another copy of $P$ in $C$.  
This iteration must terminate after finitely many steps.  
Indeed, let $r_{\max}=\max_i r_i<1$.  
If the original copy of scale $s>0$ remained in a single construction interval for the first $n$ levels, then that interval would have diameter at most $r_{\max}^n$, and therefore $s\le r_{\max}^n$.
This is impossible for every $n$, since $r_{\max}^n\to0$.  
Thus a first split occurs after finitely many inverse branches, and
$$G_P(C)\subseteq
\bigcup_{n\in\Nzero}
\ \bigcup_{(i_1,\ldots,i_n)\in\{1,\ldots,M\}^n}
(r_{i_1}\cdots r_{i_n})D_P.$$

Take $s\in D_P$.  
At least one pair of consecutive points of the embedded pattern lies in different first-level intervals.  
For some$j\in\{2,\ldots,k\}$,
$$\delta_j(P)s\ge g_{\min},$$
which implies
$$s\ge\frac{g_{\min}}{\delta_j(P)}\ge\frac{g_{\min}}{\delta_{\max}}=L_P.$$
Since $x+sP\subset[0,1]$ and $\diam(P)=1$, we also have $s\le1$.

Assume $k>M$.  
If $A_i=(x+sP)\cap[a_i,b_i]$, then points of $A_i$ are separated by at least $\delta_{\min}s$, and hence
$$
\#A_i\le
\left\lfloor\frac{r_i}{\delta_{\min}s}\right\rfloor+1.
$$
Therefore
$$
k\le\sum_{i=1}^M\#A_i
\le\frac{\sum_{i=1}^M r_i}{\delta_{\min}s}+M,
$$
which yields
$$
s\le\frac{\sum_{i=1}^M r_i}{\delta_{\min}(k-M)}.
$$
Moreover, some first-level interval contains at least $\lceil k/M\rceil$ points.  
Because an interval contains a consecutive block of the ordered pattern, those points span at least $\lceil k/M\rceil-1$ consecutive pattern gaps.  
Thus
$$
(\lceil k/M\rceil-1)\delta_{\min}s
\le\max_{1\le i\le M}r_i,
$$
which gives the third upper bound.  
Hence $D_P\subseteq[L_P,U_P]$, and the first-splitting decomposition proves the theorem.
\end{proof}

\begin{corollary}[Homogeneous IFS]\label{cor:homogeneous-pattern-blackout}
Under the hypotheses of Theorem~\ref{thm:general-pattern-blackout}, suppose $r_i=r$ for every $i$.  
Then
$$G_P(C)\subseteq\bigcup_{n\in\Nzero}r^n[L_P,U_P].$$
If $L_P>U_P$, then $G_P(C)=\varnothing$.  
If $rU_P<L_P$, then the intervals $r^n[L_P,U_P]$ are pairwise disjoint and $G_P(C)$ contains no interval of
the form $(0,\eta)$.
\end{corollary}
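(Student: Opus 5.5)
The plan is to specialize Theorem~\ref{thm:general-pattern-blackout} to the homogeneous case and then read off the three claims directly. With $r_i=r$ for every $i$, the product $r_{i_1}\cdots r_{i_n}$ equals $r^n$ for every word of length $n$. The double union in the theorem therefore collapses to $\bigcup_{n\in\Nzero} r^n[L_P,U_P]$, which proves the first inclusion.

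\textbf{Emptiness.} If $L_P>U_P$, then $[L_P,U_P]=\varnothing$, so each rescaled copy $r^n[L_P,U_P]$ is empty and the union is empty. Hence $G_P(C)=\varnothing$.

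\textbf{Disjointness.} Assume now $rU_P<L_P$. Since $r<1$, for $m>n$ we have
$$\max r^m[L_P,U_P]=r^mU_P\le r^{n+1}U_P<r^nL_P=\min r^n[L_P,U_P].$$
So the closed intervals $r^n[L_P,U_P]$ are pairwise disjoint. Moreover, they are strictly ordered, decreasing in $n$, with an open gap
$$\bigl(r^{n+1}U_P,\,r^nL_P\bigr)$$
between consecutive ones.

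\textbf{No interval $(0,\eta)$.} Suppose $(0,\eta)\subseteq G_P(C)$ for some $\eta>0$. Choose $n$ large enough that $r^nL_P<\eta$. The gap $(r^{n+1}U_P,r^nL_P)$ is nonempty by the assumption $rU_P<L_P$, and it lies inside $(0,\eta)$, because $r^{n+1}U_P>0$ (note $U_P\ge L_P>0$ unless the set is already empty). This gap meets none of the intervals $r^m[L_P,U_P]$: those with $m\le n$ lie at or above $r^nL_P$, and those with $m\ge n+1$ lie at or below $r^{n+1}U_P$. Hence the gap is disjoint from $G_P(C)$, which is a contradiction.

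\textbf{Main obstacle.} There is none of substance. The only care needed is the degenerate case $L_P>U_P$, where the statement about intervals is vacuous, and checking that $L_P=g_{\min}/\delta_{\max}>0$, which holds by strong separation.
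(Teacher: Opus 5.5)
Your proposal is correct and follows the paper's argument. You specialize the word products to $r^n$, observe that the union is empty when $L_P>U_P$, and use the open gaps $(r^{n+1}U_P,r^nL_P)$, which accumulate at $0$, to exclude any interval $(0,\eta)$; you just check pairwise disjointness and the placement of the chosen gap inside $(0,\eta)$ more explicitly than the paper does.
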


\begin{proof}
Every word of length $n$ has contraction ratio $r^n$, so the asserted containment is the specialization of Theorem~\ref{thm:general-pattern-blackout}.  
If $L_P>U_P$, the first-split scale set is empty and hence so is $G_P(C)$.  
If $rU_P<L_P$, then for every $n\ge0$,
$$r^{n+1}U_P<r^nL_P,$$
so consecutive containing intervals are separated by the nonempty open gap
$(r^{n+1}U_P,r^nL_P)$.  
These gaps accumulate at zero, which rules out an interval $(0,\eta)$ in $G_P(C)$.
\end{proof}

\begin{corollary}[Arithmetic progressions]\label{cor:general-AP-blackout}
Under the IFS hypotheses of Theorem~\ref{thm:general-pattern-blackout}, for
$k\ge3$,
$$
G^k_{AP}(C)\subseteq
\bigcup_{n\in\Nzero}
\ \bigcup_{(i_1,\ldots,i_n)\in\{1,\ldots,M\}^n}
(r_{i_1}\cdots r_{i_n})[g_{\min},c_k],
$$
where
$$
c_k:=
\begin{cases}
\displaystyle
\min\left\{
\frac1{k-1},
\frac{\sum_{i=1}^M r_i}{k-M},
\frac{\max_{1\le i\le M}r_i}{\lceil k/M\rceil-1}
\right\},&k>M,\\[3ex]
\displaystyle\frac1{k-1},&2\le k\le M.
\end{cases}
$$
\end{corollary}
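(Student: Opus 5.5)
This follows by specializing Theorem~\ref{thm:general-pattern-blackout} to $P=\mathcal A_k$ and converting scales to common differences via \eqref{eq:AP-pattern-conversion}. For the arithmetic pattern $\mathcal A_k=\{0,\frac1{k-1},\ldots,1\}$ every consecutive gap equals $\frac1{k-1}$. Hence
$$\delta_{\min}(\mathcal A_k)=\delta_{\max}(\mathcal A_k)=\frac1{k-1},$$
and the lower endpoint becomes $L_{\mathcal A_k}=(k-1)g_{\min}$.

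For the upper endpoint with $k>M$, we substitute $\delta_{\min}=1/(k-1)$ into the three terms of $U_P$. This gives
$$U_{\mathcal A_k}=\min\left\{1,\ \frac{(k-1)\sum_i r_i}{k-M},\ \frac{(k-1)\max_i r_i}{\lceil k/M\rceil-1}\right\}=(k-1)\,c_k .$$
For $2\le k\le M$ we have $U_{\mathcal A_k}=1=(k-1)\cdot\frac1{k-1}$. Thus in every case $[L_{\mathcal A_k},U_{\mathcal A_k}]=(k-1)[g_{\min},c_k]$.

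The theorem therefore gives
$$G_{\mathcal A_k}(C)\subseteq\bigcup_{n}\bigcup_{w}r_w\,(k-1)[g_{\min},c_k],$$
where $r_w=r_{i_1}\cdots r_{i_n}$. By \eqref{eq:AP-pattern-conversion}, $G^k_{AP}(C)=\frac1{k-1}G_{\mathcal A_k}(C)$. Dividing by $k-1$, which commutes with the scalings by $r_w$, yields exactly the stated containment.

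Nothing here is a real obstacle. The only point to check is that in the case $k>M$ the term $\lceil k/M\rceil-1$ is positive, so the third bound is well defined. This holds because $k>M$ forces $\lceil k/M\rceil\ge2$; it is the same condition already used in the theorem. The hypothesis $k\ge3$ is not needed for the algebra and only matches the intended use.
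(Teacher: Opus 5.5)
Your proposal is correct and follows the paper's proof: you specialize Theorem~\ref{thm:general-pattern-blackout} to $P=\mathcal A_k$ with $\delta_{\min}=\delta_{\max}=1/(k-1)$, note that $[L_{\mathcal A_k},U_{\mathcal A_k}]=(k-1)[g_{\min},c_k]$, and divide by $k-1$ using \eqref{eq:AP-pattern-conversion}. The only difference is that you write out the endpoint algebra and check that $\lceil k/M\rceil-1>0$ when $k>M$, which the paper leaves implicit.
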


\begin{proof}
Apply Theorem~\ref{thm:general-pattern-blackout} to $P=\mathcal A_k$.  
Here $\delta_{\min}=\delta_{\max}=1/(k-1)$ and \eqref{eq:AP-pattern-conversion} gives $G_{\mathcal A_k}(C)=(k-1)G^k_{AP}(C)$.  
Dividing the resulting interval bounds by $k-1$ proves the claim.
\end{proof}

\begin{corollary}[Small-scale exclusion criterion]\label{cor:middle-epsilon-AP-blackout}
Let $0<\eps\le1/3$, $\lambda=(1-\eps)/2$, and $k\ge3$.  
Then
$$
G^k_{AP}(C_\eps)
\subseteq
\bigcup_{n\in\Nzero}\lambda^n[\eps,c_k(\eps)],
$$
where
$$
c_k(\eps):=
\min\left\{
\frac1{k-1},
\frac{2\lambda}{k-2},
\frac{\lambda}{\lceil k/2\rceil-1}
\right\}.
$$
If $\eps>c_k(\eps)$, then $G^k_{AP}(C_\eps)=\varnothing$.  
If $\lambda c_k(\eps)<\eps$, then $G^k_{AP}(C_\eps)$ contains no interval of the form $(0,r)$.  
In particular, for $k=3$ we have $c_3(\eps)=\lambda$, so this conclusion holds whenever
$$
\eps>\lambda^2
\quad\Longleftrightarrow\quad
\eps>3-2\sqrt2.
$$
\end{corollary}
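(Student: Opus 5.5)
This is a specialization of Corollary~\ref{cor:general-AP-blackout} (equivalently Corollary~\ref{cor:homogeneous-pattern-blackout} with $P=\mathcal A_k$) to $C_\eps$. The plan is to read off the IFS data, substitute, and then run the two exclusion arguments.

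First, identify the data. For $C_\eps$ we have $M=2$ and $r_1=r_2=\lambda$, with first-level intervals $[0,\lambda]$ and $[1-\lambda,1]$. These are disjoint because $\eps>0$, so strong separation holds. The convex hull is $[0,1]$, and the single gap has length $g_{\min}=1-2\lambda=\eps$.

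Next, evaluate $c_k$. For $k\ge3>M$ we get $\sum r_i=2\lambda$ and $\max r_i=\lambda$, so
$$c_k=\min\left\{\frac1{k-1},\frac{2\lambda}{k-2},\frac{\lambda}{\lceil k/2\rceil-1}\right\}=c_k(\eps).$$
Every word of length $n$ has ratio $\lambda^n$, so the double union collapses to $\bigcup_{n}\lambda^n[\eps,c_k(\eps)]$. This proves the containment.

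Then the two consequences.
\begin{enumerate}[label=\textup{(\roman*)}]
\item If $\eps>c_k(\eps)$, every interval $\lambda^n[\eps,c_k(\eps)]$ is empty, so $G^k_{AP}(C_\eps)=\varnothing$.
\item If $\lambda c_k(\eps)<\eps$, then $\lambda^{n+1}c_k(\eps)<\lambda^n\eps$ for all $n$. Hence the open gaps $(\lambda^{n+1}c_k(\eps),\lambda^n\eps)$ are nonempty, they are missed by $G^k_{AP}(C_\eps)$, and they accumulate at $0$. So no interval $(0,r)$ can be contained in $G^k_{AP}(C_\eps)$. This is exactly the argument in Corollary~\ref{cor:homogeneous-pattern-blackout}, applied after dividing by $k-1$ via \eqref{eq:AP-pattern-conversion}.
\end{enumerate}

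Finally, the case $k=3$. Here $c_3=\min\{1/2,2\lambda,\lambda\}=\lambda$, since $\lambda<1/2$. Condition (ii) becomes $\eps>\lambda^2$. Solving $\eps=\bigl((1-\eps)/2\bigr)^2$ gives $\eps^2-6\eps+1=0$, whose root in $(0,1)$ is $3-2\sqrt2$. Since $\lambda^2-\eps$ is decreasing in $\eps$, the condition $\eps>\lambda^2$ is equivalent to $\eps>3-2\sqrt2$.

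There is no genuine obstacle here. The only point needing care is checking that the hypotheses of Theorem~\ref{thm:general-pattern-blackout} hold with $g_{\min}=\eps$.
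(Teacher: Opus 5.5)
Your proposal is correct and follows the same route as the paper. You specialize Corollary~\ref{cor:general-AP-blackout} with $M=2$, $r_1=r_2=\lambda$, $g_{\min}=\eps$, obtain the two conclusions from the argument of Corollary~\ref{cor:homogeneous-pattern-blackout}, and compute $c_3(\eps)=\lambda$; the only cosmetic difference is that you obtain $\eps>\lambda^2\iff\eps>3-2\sqrt2$ from the monotonicity of $\lambda^2-\eps$ rather than from the sign of $\eps^2-6\eps+1$, which is equally valid.
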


\begin{proof}
The middle-$\eps$ IFS has $M=2$, common contraction ratio $\lambda$, and first-level gap $g_{\min}=\eps$.  Substituting these data into Corollary~\ref{cor:general-AP-blackout} gives the stated formula for $c_k(\eps)$ and the scale containment.  The two conclusions follow from Corollary~\ref{cor:homogeneous-pattern-blackout}.  
For $k=3$, $\lambda\le1/2$ gives
$$
c_3(\eps)=\min\left\{\frac12,2\lambda,\lambda\right\}=\lambda.
$$
Finally,
$$
\eps>\lambda^2
\quad\Longleftrightarrow\quad
4\eps>(1-\eps)^2
\quad\Longleftrightarrow\quad
\eps^2-6\eps+1<0,
$$
which, in $0<\eps\le1/3$, is equivalent to $\eps>3-2\sqrt2$.
\end{proof}

\begin{corollary}[Asymmetric three-point patterns]\label{cor:asymmetric-pattern-blackout}
Let $0<\eps\le1/3$, $\lambda=(1-\eps)/2$, and $P_\theta=\{0,\theta,1\}$ with $1/2\le\theta<1$.  
Then
$$
G_{P_\theta}(C_\eps)
\subseteq
\bigcup_{n\in\Nzero}\lambda^n
\left[
\frac{\eps}{\theta},
U_\theta(\eps)
\right],
\qquad
U_\theta(\eps):=
\min\left\{1,\frac{\lambda}{1-\theta}\right\}.
$$
If
$$
\lambda U_\theta(\eps)<\frac{\eps}{\theta},
$$
then $G_{P_\theta}(C_\eps)$ contains no interval of the form $(0,r)$.
The case $0<\theta<1/2$ follows by reflecting the pattern and replacing $\theta$ by $1-\theta$.
\end{corollary}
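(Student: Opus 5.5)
This corollary is a direct specialization of Theorem~\ref{thm:general-pattern-blackout} and Corollary~\ref{cor:homogeneous-pattern-blackout} to the middle-$\eps$ IFS and the pattern $P=P_\theta$. I will first record the data. For $C_\eps$ we have $M=2$, $r_1=r_2=\lambda$, $\conv(C_\eps)=[0,1]$, strong separation (since $\eps>0$), and $g_{\min}=\eps$. For $P_\theta=\{0,\theta,1\}$ with $1/2\le\theta<1$, the two consecutive gaps are $\theta$ and $1-\theta$. Hence
$$\delta_{\max}(P_\theta)=\theta,\qquad \delta_{\min}(P_\theta)=1-\theta,\qquad k=3>M=2.$$
Substituting gives $L_P=\eps/\theta$. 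For the upper bound, the three candidates are $1$, then $(2\lambda)/((1-\theta)(3-2))=2\lambda/(1-\theta)$, then $\lambda/((1-\theta)(\lceil 3/2\rceil-1))=\lambda/(1-\theta)$. The second is dominated by the third, so
$$U_P=\min\{1,\lambda/(1-\theta)\}=U_\theta(\eps).$$
The containment $G_{P_\theta}(C_\eps)\subseteq\bigcup_n\lambda^n[\eps/\theta,U_\theta(\eps)]$ is then exactly the homogeneous form in Corollary~\ref{cor:homogeneous-pattern-blackout}, with $r=\lambda$.

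For the no-interval claim, I will invoke the last clause of Corollary~\ref{cor:homogeneous-pattern-blackout}. If $\lambda U_P<L_P$, then the open gaps $(\lambda^{n+1}U_\theta(\eps),\lambda^n\eps/\theta)$ are nonempty and accumulate at $0$. So no interval $(0,r)$ can lie in $G_{P_\theta}(C_\eps)$.

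For $0<\theta<1/2$, I will use the reflection $x\mapsto 1-x$. This map preserves $C_\eps$ by its symmetry about $1/2$, and it sends $x+sP_\theta$ to $(1-x-s)+sP_{1-\theta}$. Therefore $G_{P_\theta}(C_\eps)=G_{P_{1-\theta}}(C_\eps)$, and the statement for $\theta<1/2$ follows from the case $1-\theta\in(1/2,1)$.

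There is essentially no obstacle here. The only point needing care is checking which of the three terms in $U_P$ is smallest, i.e.\ that $\lambda/(1-\theta)\le 2\lambda/(1-\theta)$, which is immediate. It is also worth noting that the restriction $\theta\ge1/2$ is what makes $\delta_{\max}=\theta$ and $\delta_{\min}=1-\theta$.
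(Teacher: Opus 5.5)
Your proposal is correct and follows the paper's proof. You specialize Theorem~\ref{thm:general-pattern-blackout} with $M=2$, $r_1=r_2=\lambda$, $g_{\min}=\eps$, $\delta_{\max}=\theta$ and $\delta_{\min}=1-\theta$, which gives $L_{P_\theta}=\eps/\theta$ and $U_{P_\theta}=U_\theta(\eps)$; you then invoke Corollary~\ref{cor:homogeneous-pattern-blackout} for the blackout criterion and handle $\theta<1/2$ via the reflection $x\mapsto 1-x$, with the explicit image $(1-x-s)+sP_{1-\theta}$ being a small detail the paper leaves implicit.
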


\begin{proof}
For $1/2\le\theta<1$,
$$
\delta_{\min}(P_\theta)=1-\theta,
\qquad
\delta_{\max}(P_\theta)=\theta.
$$
Theorem~\ref{thm:general-pattern-blackout}, with $M=2$, $r_1=r_2=\lambda$, and $g_{\min}=\eps$, therefore gives
$$
L_{P_\theta}=\frac{\eps}{\theta}
$$
and
$$
U_{P_\theta}
=\min\left\{1,
\frac{2\lambda}{1-\theta},
\frac{\lambda}{1-\theta}\right\}
=U_\theta(\eps).
$$
The blackout criterion is now
Corollary~\ref{cor:homogeneous-pattern-blackout}.  
If $0<\theta<1/2$, the
reflection $x\mapsto1-x$ carries $P_\theta$ onto $P_{1-\theta}$.  
Since $C_\eps$ is symmetric about $1/2$, reflection preserves its pattern scale sets, and the result follows from the case $1-\theta>1/2$.
\end{proof}

\subsection{A geometric refinement for three-term progressions}
\label{subsec:first-split-spectrum}

We now specialize to the middle-$\eps$ Cantor set from the local setup above.
The first-splitting argument records only the diameter and the central gap of the two first-generation intervals.  
For three-term progressions one can retain more of the product geometry which yields both an exact renormalization formula and a strictly sharper blackout criterion.

Write $C_0=f_0(C_\eps)$ and $C_1=f_1(C_\eps)$ for the two first-generation pieces.

\begin{definition}[First-split spectrum]\label{def:first-split-spectrum}
Define
$$
B_\eps:=\left\{t>0:
\begin{array}{l}
\text{there exists }x\text{ with }x,x+t,x+2t\in C_\eps,\\
\text{with at least one of these points in each }C_0, C_1
\end{array}
\right\}.
$$
Thus $B_\eps$ consists of the progression gaps whose first separation occurs at the first construction level.
\end{definition}

\begin{lemma}[Exact first-split decomposition]
\label{lem:exact-first-split-decomposition}
For every $0<\eps\le 1/3$,
$$
\boxed{
G^3_{AP}(C_\eps)=\bigcup_{n=0}^{\infty}\lambda^n B_\eps.
}
$$
\end{lemma}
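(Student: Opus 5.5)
We prove the two inclusions separately, using only the self-similarity $C_\eps=f_0(C_\eps)\cup f_1(C_\eps)$ with $f_i$ similarities of ratio $\lambda$. The argument is the same first-splitting reduction used in the proof of Theorem~\ref{thm:general-pattern-blackout}, made exact because every branch has the same ratio and so rescales common differences by exactly $\lambda$.

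For the inclusion $\supseteq$, fix $n\ge0$ and $t\in B_\eps$, witnessed by $x,x+t,x+2t\in C_\eps$. Choose any word $w\in\{0,1\}^n$, for instance $w=0^n$. Since $f_w(y)=\lambda^n y+b_w$ and $f_w(C_\eps)\subseteq C_\eps$, the points $f_w(x),f_w(x)+\lambda^n t,f_w(x)+2\lambda^n t$ lie in $C_\eps$. Hence $\lambda^n t\in G^3_{AP}(C_\eps)$. This direction is immediate.

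For the inclusion $\subseteq$, take $t\in G^3_{AP}(C_\eps)$ witnessed by the triple $T=\{x,x+t,x+2t\}\subset C_\eps\subset[0,1]$. If $T$ meets both $C_0$ and $C_1$, then $t\in B_\eps=\lambda^0B_\eps$. Otherwise $T\subset C_i$ for a single $i$, because $C_\eps=C_0\sqcup C_1$ (strong separation). In that case $f_i^{-1}(T)$ is a three-term progression with difference $t/\lambda$ contained in $f_i^{-1}(C_i)=C_\eps$.

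We iterate this step. Since $2t\le\diam$ of the current construction interval, which is $\lambda^m$ after $m$ steps, the process must stop at some $m$ with $\lambda^m\ge 2t$. At that step the progression $f_w^{-1}(T)$, of difference $t/\lambda^m$, meets both first-level pieces. Thus $t/\lambda^m\in B_\eps$, so $t\in\lambda^mB_\eps$.

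The only point needing care is the termination argument together with the fact that each pulled-back triple again lies in $C_\eps$. The latter holds because $f_i^{-1}(C_\eps\cap f_i([0,1]))=C_\eps$, using the disjointness of $C_0$ and $C_1$. The hypothesis $\eps\le1/3$ is not needed for either inclusion.
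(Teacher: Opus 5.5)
Your proof is correct and is essentially the paper's argument. The paper only says the identity follows from self-similarity; your two inclusions, pushing forward by $f_{0^n}$ and pulling back by inverse branches until the triple first splits (with termination from $2t\le\lambda^m$), are exactly the first-splitting reduction in the proof of Theorem~\ref{thm:general-pattern-blackout}, made exact by the common ratio $\lambda$, and your remark that $\eps\le 1/3$ is not needed is also right.
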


\begin{proof}
This follows by self-similarity of the middle-$\eps$ Cantor set.
\end{proof}

We next compute a level-two outer approximation to $B_\eps$.  
Let $C_\eps^{(2)}$ be the union of the four level-two construction intervals,
which we denote by
$$
\begin{aligned}
I_{00}&=[0,\lambda^2],
& I_{01}&=[\lambda-\lambda^2,\lambda],\\
I_{10}&=[1-\lambda,1-\lambda+\lambda^2],
& I_{11}&=[1-\lambda^2,1],
\end{aligned}
$$
where
$$
I_0=[0,\lambda]\quad\text{ and }\quad I_1=[1-\lambda,1],
$$
are the two first-generation construction intervals.
Define
$$
B_\eps^{(2)}:=\left\{t>0:
\begin{array}{l}
\text{there exists }x\text{ with }x,x+t,x+2t\in C_\eps^{(2)},\\
\text{and these three points meet both }I_0\text{ and }I_1
\end{array}
\right\}.
$$
Since $C_\eps\subseteq C_\eps^{(2)}$ and
$C_i\subseteq I_i$ for $i=0,1$, we have
$$B_\eps\subseteq B_\eps^{(2)}.$$

For fixed $t$, let
$$
\ell_t^{(1)}=\{(u,v):v=u+t\},
\qquad
\ell_t^{(2)}=\{(u,v):v=u+2t\}.
$$
A progression $x,x+t,x+2t\in C_\eps^{(2)}$ exists exactly when the first-coordinate projections of
$$
\ell_t^{(1)}\cap(C_\eps^{(2)}\times C_\eps^{(2)})
\quad\text{and}\quad
\ell_t^{(2)}\cap(C_\eps^{(2)}\times C_\eps^{(2)})
$$
intersect.  
The following proposition computes the permitted parameters.

\begin{proposition}[Level-two outer approximation]
\label{prop:level-two-first-split}
Set
$$
\rho:=\frac{3-\sqrt5}{2},
\qquad
\eta:=\sqrt2-1,
\qquad
\kappa:=\frac{\sqrt{13}-1}{6},
$$
and
$$a_\lambda:=\frac{1-\lambda-\lambda^2}{2},
\qquad
b_\lambda:=\frac{1-\lambda+\lambda^2}{2},
\qquad
c_\lambda:=1-\lambda-\lambda^2.$$
For $1/3\le\lambda<1/2$,
$$
B_\eps^{(2)}=
\begin{cases}
[\eps,\lambda],
&\displaystyle \frac13\le\lambda\le\rho,\\[1ex]
[a_\lambda,b_\lambda],
&\rho<\lambda<\eta,\\[1ex]
[\eps,\lambda^2]\cup[a_\lambda,b_\lambda]\cup[c_\lambda,\lambda],
&\eta\le\lambda<\kappa,\\[1ex]
[\eps,\lambda],
&\kappa\le\lambda<\frac12.
\end{cases}
$$
At $\lambda=\eta$, the first and third intervals in the third line are singletons.
\end{proposition}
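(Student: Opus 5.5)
The plan is to reduce $B_\eps^{(2)}$ to a finite union of explicitly computable intervals, one for each assignment of the three points to level-two construction intervals, and then to merge these intervals as $\lambda$ varies. First, by the symmetry $x\mapsto 1-x$ of $C_\eps^{(2)}$, a progression $x,x+t,x+2t$ of type $(I_0,I_1,I_1)$ reflects to one of type $(I_0,I_0,I_1)$ with the same $t$. These are the only two splitting types, so it suffices to compute the set of $t$ arising from configurations with $x,x+t\in I_0$ and $x+2t\in I_1$.

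For a fixed assignment $x\in I_\alpha$, $x+t\in I_\beta$, $x+2t\in I_\gamma$ with $\alpha,\beta\in\{00,01\}$ (ordered, so $\alpha\le\beta$) and $\gamma\in\{10,11\}$, the admissible $t$ are those for which $I_\alpha\cap(I_\beta-t)\cap(I_\gamma-2t)\ne\varnothing$. By Helly's theorem on the line, this holds exactly when the three pairwise intersections are nonempty, that is,
$$
t\in (I_\beta-I_\alpha)\cap(I_\gamma-I_\beta)\cap\tfrac12(I_\gamma-I_\alpha),\qquad t>0 .
$$
Each factor is an interval with endpoints linear or quadratic in $\lambda$. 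I would tabulate the six cases $(\alpha,\beta,\gamma)$. For instance, $(01,01,10)$ and $(00,00,10)$ yield constraints of the form $t\le\lambda^2$ together with $t\ge\eps$, giving $[\eps,\lambda^2]$, which is nonempty iff $\lambda^2\ge\eps$, i.e.\ $\lambda\ge\eta$. The case $(00,01,10)$ should produce $[a_\lambda,b_\lambda]$, since the midpoint condition $\tfrac12(I_{10}-I_{00})$ is binding there. The cases $(00,01,11)$ and $(01,01,11)$ should produce pieces ending at $\lambda$, with lower endpoint $c_\lambda$. Each such interval I would check is nonempty and also lies inside $[\eps,\lambda]$, the first-level bound from Corollary~\ref{cor:middle-epsilon-AP-blackout}.

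The final step is to take the union and determine, for each range of $\lambda$, which pieces overlap. This comes down to sign comparisons of a few quadratics in $\lambda$:
\begin{itemize}
\item $\lambda^2$ versus $\eps=1-2\lambda$, which switches at $\eta$;
\item $\lambda^2$ versus $a_\lambda$ and $b_\lambda$ versus $c_\lambda$, which switch at the roots of $3\lambda^2+\lambda-1$, i.e.\ at $\kappa$;
\item $\eps$ versus $a_\lambda$ (and $b_\lambda$ versus $\lambda$), which switch at the roots of $\lambda^2-3\lambda+1$, i.e.\ at $\rho$.
\end{itemize}
For $\lambda\le\rho$ the middle interval $[a_\lambda,b_\lambda]$ should swallow the others into $[\eps,\lambda]$. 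On $(\rho,\eta)$ only the middle piece survives. On $[\eta,\kappa)$ the three pieces are disjoint, and at $\lambda=\eta$ the outer two degenerate to points. For $\lambda\ge\kappa$ the three pieces overlap and again fill $[\eps,\lambda]$.

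The main obstacle is this bookkeeping rather than any single idea. I need the correct closed form of each of the six intervals, including which of the three pairwise constraints is active. I also need to verify that every gap between consecutive pieces opens and closes exactly at $\rho,\eta,\kappa$. I would handle it by writing every endpoint as a polynomial in $\lambda$ and reducing each needed inequality to the sign of one of $\lambda^2+2\lambda-1$, $\lambda^2-3\lambda+1$, or $3\lambda^2+\lambda-1$ on $[1/3,1/2)$. Boundary values would be checked separately so that the closed and open endpoints come out as stated.
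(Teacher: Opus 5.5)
Your strategy is the paper's own. You reflect to the type $(I_0,I_0,I_1)$, decide each of the six level-two assignments by the one-dimensional Helly criterion (which is exactly the endpoint comparison \eqref{eq:three-interval-feasibility}), and merge using the signs of $\lambda^2+2\lambda-1$, $\lambda^2-3\lambda+1$ and $3\lambda^2+\lambda-1$. However, several of the rows you predict are wrong.

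The rows $(00,00,10)$ and $(01,01,11)$ contribute nothing. In $(00,00,10)$ the constraint $t\in I_{10}-I_{00}$ is $t\ge c_\lambda$, not $t\ge\eps$. The row is $[c_\lambda,\lambda^2]$, which is empty for $\lambda<1/2$; directly, $x+2t\le 2\lambda^2<1-\lambda$. In $(01,01,11)$ the condition $x,x+t\in I_{01}$ forces $t\le\lambda^2$, so no piece can end at $\lambda$. That row is again $[c_\lambda,\lambda^2]$, which is empty. The row $(00,00,11)=[1-2\lambda^2,\lambda^2]$ is also empty, since $\lambda<1/\sqrt3$. So only $(01,01,10)=[\eps,\lambda^2]$, $(00,01,11)=[c_\lambda,\lambda]$ and $(00,01,10)$ survive, and your planned check that ``each such interval is nonempty'' cannot succeed. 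These two slips merely duplicate genuine pieces and leave the union unchanged, but they should be corrected.

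The substantive error is in the row $(00,01,10)$. Its lower candidates are $\lambda-2\lambda^2$, $a_\lambda$, $\eps$, and its upper candidates are $\lambda$, $b_\lambda$, $1-2\lambda+2\lambda^2$. So the row equals $[\max\{\eps,a_\lambda\},\min\{\lambda,b_\lambda\}]$, and the midpoint constraint binds only for $\lambda\ge\rho$. For $\lambda<\rho$ one has $a_\lambda<\eps$ and $b_\lambda>\lambda$, so $[a_\lambda,b_\lambda]$ strictly contains $[\eps,\lambda]$. At $\lambda=1/3$ it is $[5/18,7/18]$, whereas $B^{(2)}_\eps=\{1/3\}$.

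Hence your account of the first regime, that $[a_\lambda,b_\lambda]$ ``swallows the others into $[\eps,\lambda]$'', is backwards. On $[1/3,\rho]$ both side rows are empty because $\rho<\eta$. The set $[\eps,\lambda]$ is just the single row $(00,01,10)$, in which the pairwise constraints $t\ge\eps$ (from $I_{10}-I_{01}$) and $t\le\lambda$ (from $I_{01}-I_{00}$) are the active ones. The comparisons at $\rho$ that you list decide which constraints are active inside this row; they do not describe an overlap in the union. Your own planned check that every row lies in $[\eps,\lambda]$ would have caught this.

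With these corrections, your treatment of $(\rho,\eta)$, $[\eta,\kappa)$ and $[\kappa,1/2)$ agrees with the paper's table and merging argument.
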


\begin{proof}
We first record a general interval calculation.  Let
$$
I=[p,p+L],\qquad J=[q,q+L],\qquad K=[r,r+L].
$$
The conditions $x\in I$, $x+t\in J$, and $x+2t\in K$ are equivalent to
$$
[p,p+L]\cap[q-t,q+L-t]\cap[r-2t,r+L-2t]\ne\varnothing.
$$
Comparing every lower endpoint with every upper endpoint shows that the admissible $t\ge0$ form
\begin{equation}\label{eq:three-interval-feasibility}
\begin{split}
\Big[&\max\left\{q-p-L,\frac{r-p-L}{2},r-q-L,0\right\},\\
&\min\left\{q-p+L,\frac{r-p+L}{2},r-q+L\right\}\Big].
\end{split}
\end{equation}
provided that the left endpoint does not exceed the right endpoint.

Because $C_\eps$ is symmetric about $1/2$, it is enough to analyze only the cases where $x\in I_{00}$ or $I_{01}$.
There are six nondecreasing level-two intervals.
Substituting their endpoints into \eqref{eq:three-interval-feasibility}, with $L=\lambda^2$, gives
$$\begin{array}{c|cc|c}
    (x,x+t,x+2t)&\text{candidate interval}&& \text{nonempty condition} \\ \hline\hline
   (I_{00},I_{00},I_{10})&[c_\lambda,\lambda^2]&& \lambda \geq 1/2 \\ \hline
   (I_{00},I_{00},I_{11})&[1-2\lambda^2,\lambda^2] &&\lambda \geq 1/\sqrt{3} \\ \hline
   (I_{00},I_{01},I_{10}) & [\eps,\lambda] &\text{if } \lambda\leq \rho\\ 
    &[a_\lambda,b_\lambda] &\text{if } \lambda \geq \rho  \\ \hline
    (I_{00},I_{01},I_{11})&[c_\lambda,\lambda]&&\lambda \geq \eta \\ \hline
    (I_{01},I_{01},I_{10})&[\eps,\lambda^2]&&\lambda \geq \eta \\ \hline
    (I_{01},I_{01},I_{11})&[c_\lambda,\lambda^2] &&\lambda \geq 1/2. 
\end{array}$$

As $1/3\leq \lambda<1/2$, only the middle three rows can contribute.  
For the central row, the switch occurs when
$$
a_\lambda=\eps\quad\text{and}\quad b_\lambda=\lambda,
$$
which is equivalent to $\lambda=\rho$.  The two side intervals first appear at $\lambda=\eta$, because
$$
\eps=\lambda^2\quad\text{and}\quad c_\lambda=\lambda
\quad\Longleftrightarrow\quad \lambda=\eta.
$$
Finally,
$$
\lambda^2=a_\lambda
\quad\Longleftrightarrow\quad
b_\lambda=c_\lambda
\quad\Longleftrightarrow\quad
3\lambda^2+\lambda-1=0,
$$
whose positive root is $\kappa$.  
At and beyond this value the three
intervals overlap and their union is $[\eps,\lambda]$.  
This proves the four cases in the statement.  
For ease of independent verification, Appendix~\ref{app:level-two-certificate} records all six endpoint comparisons before simplification.
\end{proof}

The level-two geometry improves the previous threshold.  
Define
$$
\eps_*:=\frac{4-\sqrt{13}}{3}=0.1314829081\ldots
<3-2\sqrt2=0.1715728753\ldots.
$$

\begin{theorem}[Improved blackout threshold]
\label{thm:improved-middle-epsilon-blackout}
If
$$
\eps_*<\eps\le\frac13,
$$
then $G^3_{AP}(C_\eps)$ contains no interval of the form $(0,r)$.
Moreover, in the additional parameter range
$$
\eps_*<\eps\le 3-2\sqrt2
\qquad\bigl(\eta\le\lambda<\kappa\bigr),
$$
one has the explicit blackout intervals
$$
\boxed{
\left(\lambda^{n+2},
\frac{1-\lambda-\lambda^2}{2}\,\lambda^n\right)
\cap G^3_{AP}(C_\eps)=\varnothing
\qquad(n\ge0).
}
$$
\end{theorem}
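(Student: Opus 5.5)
We split at $\eps=3-2\sqrt2$. For $3-2\sqrt2<\eps\le1/3$ nothing new is needed: Corollary~\ref{cor:middle-epsilon-AP-blackout} with $k=3$ already shows that $G^3_{AP}(C_\eps)$ contains no interval $(0,r)$. So the work lies in the range $\eps_*<\eps\le3-2\sqrt2$. Since $\eps=1-2\lambda$, this range corresponds to $\eta\le\lambda<\kappa$, because $\eps=\lambda^2$ exactly when $\lambda=\eta$, and $\eps=\eps_*$ exactly when $\lambda=\kappa$. The latter follows from $3\lambda^2+\lambda-1=0$: at $\lambda=\kappa$ we get $1-2\kappa=1-(\sqrt{13}-1)/3=(4-\sqrt{13})/3$. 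The plan is to combine Lemma~\ref{lem:exact-first-split-decomposition} with the inclusion $B_\eps\subseteq B^{(2)}_\eps$ and Proposition~\ref{prop:level-two-first-split}. Together these give
$$G^3_{AP}(C_\eps)\subseteq\bigcup_{n\ge0}\lambda^n\bigl([\eps,\lambda^2]\cup[a_\lambda,b_\lambda]\cup[c_\lambda,\lambda]\bigr).$$

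Next we show that the interval $(\lambda^{n+2},a_\lambda\lambda^n)$ misses every piece of this union. The gap between the first two pieces at level $n$ is $(\lambda^{n+2},a_\lambda\lambda^n)$, and it is nonempty exactly when $\lambda^2<a_\lambda$, that is, when $3\lambda^2+\lambda-1<0$, i.e.\ $\lambda<\kappa$. Level-$n$ pieces lie below $\lambda^{n+2}$ or above $a_\lambda\lambda^n$, and we must check that pieces from other levels $m\ne n$ do not intrude. For $m\ge n+1$, every level-$m$ piece lies in $[0,\lambda^{m+1}]\subseteq[0,\lambda^{n+2}]$, so it is fine. For $m\le n-1$, every level-$m$ piece lies in $[\eps\lambda^m,\infty)$, and $\eps\lambda^m\ge\eps\lambda^{n-1}$. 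It therefore suffices to have $\eps\ge\lambda a_\lambda$. We verify this from $\eps\ge\lambda^2$, which holds since $\lambda\ge\eta$, together with $a_\lambda<1/2$ and $\lambda^2\ge\lambda/2\cdot\lambda$. Even more simply, $\lambda a_\lambda<\lambda\cdot\lambda\le\eps$ whenever $a_\lambda\le\lambda$; this reduces to $1-3\lambda-\lambda^2\le0$, which holds for $\lambda\ge(\sqrt{13}-3)/2\approx0.30$ and hence on the whole range $\lambda\ge1/3$. This yields the boxed blackout intervals.

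These gaps are nonempty for every $n$ and accumulate at $0$. Hence $G^3_{AP}(C_\eps)$ contains no $(0,r)$ on this range, which completes the first claim.

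The main obstacle is bookkeeping at the endpoints, specifically $\lambda=\eta$, where $[\eps,\lambda^2]$ degenerates to a point. There the open blackout interval is still correct, so no special treatment is needed beyond noting that $\lambda^2<a_\lambda$ persists.
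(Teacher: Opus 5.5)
Your architecture is the paper's: you reduce to $\eta\le\lambda<\kappa$ via Corollary~\ref{cor:middle-epsilon-AP-blackout}, combine Lemma~\ref{lem:exact-first-split-decomposition} with Proposition~\ref{prop:level-two-first-split}, and split into the cases $m\ge n+1$, $m=n$, $m\le n-1$. You also correctly isolate the one inequality the case $m\le n-1$ needs, namely $\eps\ge\lambda a_\lambda$.

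\textbf{The gap.} Your verification of $\eps\ge\lambda a_\lambda$ is wrong. You invoke $\eps\ge\lambda^2$ ``since $\lambda\ge\eta$'', but this inequality is reversed. With $\eps=1-2\lambda$, one has $\eps>\lambda^2$ iff $\lambda^2+2\lambda-1<0$ iff $\lambda<\eta$. So on the range you are treating, $\eps\le\lambda^2$, with equality only at $\lambda=\eta$. Your own closing remark says the same thing: $[\eps,\lambda^2]$ degenerates to a point only at $\lambda=\eta$, so for $\lambda>\eta$ it is a genuine interval and $\eps<\lambda^2$. Both of your chains therefore fail, including the ``even more simply'' one $\lambda a_\lambda\le\lambda^2\le\eps$. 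In fact, on this range both $\eps/\lambda$ and $a_\lambda$ are at most $\lambda$, so no comparison routed through $\lambda^2$ can work.

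\textbf{The fix.} The inequality is true but needs a direct computation. The paper writes $2(\eps-\lambda a_\lambda)=\lambda^3+\lambda^2-5\lambda+2$. This cubic is decreasing on $[0,1/2]$, since its derivative $3\lambda^2+2\lambda-5$ is negative there. At $\lambda=\kappa$ its value is $(82-22\sqrt{13})/27\approx0.099>0$.

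A quicker alternative: $a_\lambda<(1-\lambda)/2$, so $\lambda a_\lambda<\lambda(1-\lambda)/2$. Moreover $\lambda(1-\lambda)/2\le1-2\lambda$ iff $\lambda^2-5\lambda+2\ge0$, which holds for $\lambda\le(5-\sqrt{17})/2\approx0.4384$. This covers $\lambda<\kappa\approx0.4343$, but the margin is small, so the estimate must be done with some care.

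With this step replaced, your proof is complete and coincides with the paper's.
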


\begin{proof}
When $\eps>3-2\sqrt2$, the conclusion follows already from Corollary~\ref{cor:middle-epsilon-AP-blackout}.  
It remains to consider $\eps_*<\eps\le3-2\sqrt2$, equivalently $\eta\le\lambda<\kappa$.  Proposition~\ref{prop:level-two-first-split} gives
\begin{equation}\label{eq:level-two-containment}
B_\eps\subseteq
[\eps,\lambda^2]\cup[a_\lambda,b_\lambda]
\cup[c_\lambda,\lambda],
\qquad
\lambda^2<a_\lambda.
\end{equation}
We also need to rule out interference from larger scales.  
On $[\eta,\kappa]$,
\begin{equation}\label{eq:scale-separation-polynomial}
2(\eps-\lambda a_\lambda)
=\lambda^3+\lambda^2-5\lambda+2>0.
\end{equation}
Indeed, the polynomial on the right is decreasing on $[0,1/2]$, and at $\lambda=\kappa$ it equals $(82-22\sqrt{13})/27>0$.

Fix $n\ge0$ and let
$$
J_n=(\lambda^{n+2},a_\lambda\lambda^n).
$$
By Lemma~\ref{lem:exact-first-split-decomposition}, it is enough to show that $J_n$ misses $\lambda^mB_\eps^{(2)}$ for every $m\ge0$.
If $m\ge n+1$, then
$$
\max(\lambda^mB_\eps^{(2)})\le\lambda^{m+1}
\le\lambda^{n+2}.
$$
If $m=n$, the exclusion follows directly from \eqref{eq:level-two-containment}.  
Finally, if $m\le n-1$, then by \eqref{eq:scale-separation-polynomial},
$$
\min(\lambda^mB_\eps^{(2)})
=\eps\lambda^m
\ge\eps\lambda^{n-1}
>a_\lambda\lambda^n.
$$
Thus $J_n\cap G^3_{AP}(C_\eps)=\varnothing$ for every $n$.  
Since the intervals $J_n$ accumulate at zero, $G^3_{AP}(C_\eps)$ cannot contain any interval $(0,r)$.
\end{proof}

\begin{remark}
The threshold $\eps_*$ is the strongest one obtainable by combining the exact first-split decomposition with this complete level-two outer containment alone.  
Indeed, when $\lambda\ge\kappa$ (equivalently $\eps\le\eps_*$),
Proposition~\ref{prop:level-two-first-split} gives $B_\eps^{(2)}=[\eps,\lambda]$, exactly the original first-level bound.
A further improvement would therefore require level three or deeper
symbolic information.
\end{remark}

\section{The Hunt-Kan-Yorke Method and Intervals of Pattern Scales}\label{sec:HKY section triple intersection}

In this section, we build on a result of Hunt-Kan-Yorke \cite{hunt_kan_yorke} to investigate the gap set 
\begin{equation*}
    G^k_{AP}(C):= \left\{ t>0 :\, \exists \, x\, \text{ so that } x,x+t,\ldots,x+(k-1)t \in C \right\}
\end{equation*}
to quantify the range of common differences that can occur in $3$- and $4$-term arithmetic progressions contained in an admissible compact set $C\subset \R$; in particular, we provide explicit conditions for the gap sets $G^3_{AP}(C)$ and $G^4_{AP}(C)$ to contain intervals about $0$.

\begin{theorem}[Interior of the $3$-Gap Set]\label{thm:hky triple intersection}
    Let $C\subset \R$ be a Cantor set with $d=\diam(C)>0$ and largest bounded gap $G$ satisfying $|G|\leq 0.067d$ and thickness $$\tau(C)\geq \frac{1-0.067}{2(0.067)} =  6.96268\ldots$$ 
    Then $G_{AP}^3(C)$ contains the interval $(0,0.435d]$.
\end{theorem}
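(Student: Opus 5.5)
Normalize so that $\conv(C)=[0,1]$, so $d=1$ and the largest bounded gap has length $g\le 0.067$. By \eqref{eq:intro-AP-intersection}, $t\in G^3_{AP}(C)$ iff $(C-t)\cap C\cap(C+t)\ne\varnothing$. Following the strategy described in the introduction, for each $t\in(0,0.435]$ we will construct a compact set $K_t\subseteq (C-t)\cap(C+t)$ with explicit, $t$-uniform lower bounds on $\tau(K_t)$ and on $\diam(K_t)$. We then apply the Newhouse Gap Lemma (Proposition~\ref{prop:newhouse-gap-lemma}) to $K_t$ and $C$. A point $y\in K_t\cap C$ gives $y-t,y,y+t\in C$.

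\textbf{Building $K_t$.} The sets $C-t$ and $C+t$ are translates of $C$ differing by $2t$. For $2t<1$, Lemma~\ref{lem:short-translate-interleaved} makes them interleaved. Their hull overlap is $[t,1-t]$, of length $1-2t\ge 0.13$. We run the initial stage of the Hunt--Kan--Yorke construction: inside the overlap, remove from each set the gaps of length exceeding a threshold $\beta$. Here $\beta$ is chosen relative to $g$ so that the surviving bridges of $C-t$ and of $C+t$ interleave pairwise. The thickness hypothesis $\tau\ge(1-g)/(2g)$ says every bridge adjacent to a gap of length $\ell$ has length at least $\ell\tau$. Bridges next to the largest gap therefore have length at least $(1-g)/2$, which forces the big gaps to sit near the middle. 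The HKY gap-size estimate then says that a gap of the intersection set $K_t$ is contained in a union of a gap of one set and an adjacent gap of the other. This bounds the gaps of $K_t$ by roughly twice a gap length, while its bridges remain comparable to bridges of $C$. We aim for an explicit bound of the form $\tau(K_t)\ge \tau^*$, with $\tau^*$ depending on $\tau(C)$ and tending to order $\tau(C)/2$ or so, and $\diam(K_t)\ge D(t)$ with $D(t)$ roughly $1-2t-O(g)$. The key is that these must be uniform in $t$. The numbers $0.067$ and the threshold $6.96\ldots$ come from making the inequalities $\tau(K_t)\tau(C)\ge1$ and the interleaving conditions below hold simultaneously down to equality at the endpoint $t=0.435$.

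\textbf{Interleaving $K_t$ with $C$.} $K_t$ lies in $[t,1-t]$, which lies in the interior of $\conv(C)$, so $K_t$ meets the interior of $\conv(C)$ once it is not contained in a gap of $C$. That holds if $\diam(K_t)>g$, which follows from $D(t)>0.067$. Conversely, $C$ must meet the interior of $\conv(K_t)$. This holds if $\conv(K_t)$ is longer than any gap of $C$, again the condition $D(t)>g$. We also need $\conv(K_t)$ not to lie inside a bounded gap of $C$. Since $|\conv(K_t)|>g$, this is automatic. The condition $\tau(K_t)\tau(C)\ge1$ is easy given $\tau(C)\ge 6.96$, provided $\tau^*\ge 1/6.96$.

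\textbf{Main obstacle.} The hard step is the quantitative lower bound on $\diam(K_t)$ for $t$ close to $0.435$. There the overlap $[t,1-t]$ has length only about $0.13$, and one must ensure that the HKY initial stage does not discard a large portion. Near the ends of the overlap, the pieces of $C-t$ and $C+t$ may fail to interleave, and a gap of size up to $g$ could eat the boundary. We first try to show that only a boundary layer of width at most about $g$ on each side is lost, giving $D(t)\ge 1-2t-2g$. Plugging in $t=0.435$ and $g=0.067$ gives $0.13-0.134<0$, which is too crude. So we will instead exploit that the bridges next to large gaps are long ($\ge (1-g)/2$). These long bridges force the endpoints of $\conv(C\pm t)$ to sit inside long bridges of the other set, so the loss is a single small gap of size at most $g/\tau$-scale rather than $g$. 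Refining this endpoint analysis is where the specific constants $0.067$ and $0.435$ should emerge. If it fails, we fall back on proving a smaller explicit interval by the same scheme.
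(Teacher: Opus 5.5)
Your architecture matches the paper's: normalize, build $K_s\subseteq(C-s)\cap(C+s)$, then apply the Gap Lemma to $K_s$ and $C$, with interleaving reduced to $\diam(K_s)>|G|$. The gap is that the proposal never supplies the two $s$-uniform estimates that make up the proof, and the mechanism you sketch for them does not work.

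\textbf{Thickness of $K_s$.} Taking $K_s$ to be essentially the intersection, possibly after an initial removal of large gaps, does not give bridges comparable to those of $C$. Whenever a gap of $C-s$ and a gap of $C+s$ nearly abut, the piece of the intersection between them can be arbitrarily short relative to its neighbouring gaps, so no positive thickness bound follows. Hunt--Kan--Yorke avoid this by discarding such pieces. Every gap lying within $t$ times its own length of the current gap is absorbed, alternating between the two sets and possibly infinitely often, to form the gaps $K_n$. The total extension of such a chain is controlled by a multiple $\sigma$ of its first step (Lemma~\ref{hky lemma 4}), and the resulting thickness bound is $\tilde\varphi_t(\tau)$ (Proposition~\ref{thm:explicit varphi revised}). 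This is nowhere near $\tau/2$: at the anchor $\tau=6.96$, $t=0.55$, one gets $\tilde\varphi\approx 0.14369$, against $1/6.96\approx 0.14368$. So $\tau(K_s)\tau(C)\ge1$ is not ``easy''; it is the binding constraint that produces the thickness threshold.

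\textbf{Diameter of $K_s$.} This is the other missing piece, and you leave it open with a fallback to a smaller interval. Any additive bound $1-2s-O(g)$ fails at $s=0.435$, as you note. The ``end loss of order $g/\tau$'' heuristic is unproved, because at each end of $Q=[s,1-s]$ the merging can swallow a whole alternating chain of gaps from both sets. The paper instead uses a \emph{multiplicative} bound: Lemma~\ref{lem:HKY lemma 5 simplified}, applied to the unbounded gaps as in Remark~\ref{rmk:particular case of Lemma 5}, gives
$\diam(K_s)\ge\tilde\psi_t(\tau)\,(1-2s)$, with $\tilde\psi_t=1-\sigma_1\lambda_1-\sigma_2\lambda_2$. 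Here $\lambda_i=(1+t)/(\tau_i+1+t)$ bounds the first extension step at each end relative to $|Q|$. It is obtained by eliminating $|G_1|$ from $x_0\le(1+t)|G_1|$ and $x_0\le|Q|-\tau|G_1|$, while $\sigma_i$ bounds the remainder of the chain. At $t=0.55$ and $\tau\ge 6.96$ this gives $\tilde\psi\ge 0.5195$, and $0.5195\cdot 0.13>0.067$, again only barely. Claims~\ref{clm: psi increases for fixed t} and~\ref{clm: varphi increases for fixed t} (monotonicity in $\tau$) then pass from the anchor to all larger thicknesses.

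\textbf{Misread hypothesis.} Your reformulation $\tau\ge(1-g)/(2g)$, with $g$ the actual largest gap, is not implied when $g<0.067$. The hypothesis only gives bridges of length at least $6.96\,g$ next to the largest gap. So you cannot assume that gap sits near the middle, which your end-loss heuristic relies on.
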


As we will see in the subsections that follow, the key mechanism used to prove Theorem \ref{thm:hky triple intersection} is applying Proposition \ref{hky corollary 6} to the sets $C_1= C-s$, $C_2= C+s$, and $C_3 = C$, where $s>0$, from which it follows $s\in G_{AP}^3(C)$.
Note, however, that Proposition \ref{hky corollary 6} is incredibly flexible in the choice of $C_i$, and one could also choose $C_1=C+s\theta$, $C_2= C-s(1-\theta)$, and $C_3=C$, where $0<\theta<1$. 
Thus it follows as a corollary that $C_{P_\theta}(C)$ also contains an interval.

\begin{corollary}[Interior of the Asymmetric $3$-Point Patterns]\label{cor:asymmetric patterns}
    Let $C\subset \R$ be a Cantor set with $d=\diam(C)>0$ and largest bounded gap $G$ satisfying $|G|\leq 0.067d$ and thickness
    $$\tau(C)\geq \frac{1-0.067}{2(0.067)} = 6.96268\ldots$$
    Then $G_{P_\theta}(C)$ contains the interval $(0,0.87 d]$.
\end{corollary}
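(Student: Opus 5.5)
We will follow the same route as for Theorem~\ref{thm:hky triple intersection}, changing only the translates. The relevant identity is \eqref{eq:intro-pattern-intersection}: $s\in G_{P_\theta}(C)$ exactly when
$$(C-s\theta)\cap\bigl(C-s\bigr)\cap C\neq\varnothing .$$
After translating the whole configuration by $s\theta$, this is the same as
$$C\cap\bigl(C+s\theta\bigr)\cap\bigl(C-s(1-\theta)\bigr)\ne\varnothing.$$
So we apply the three-set HKY input, Proposition~\ref{hky corollary 6}, with
$$C_1=C+s\theta,\qquad C_2=C-s(1-\theta),\qquad C_3=C.$$
The shift between $C_1$ and $C_2$ is exactly $s$. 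In the three-term case the corresponding pair was $C\pm s$, whose shift is $2s$, which is why the 3-AP interval $(0,0.435d]$ should double. Indeed $0.87d=2\cdot 0.435d$, and via \eqref{eq:AP-pattern-conversion} the 3-AP statement is the $\theta=1/2$ case with scale $s=2t$.

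\textbf{Step 1: reduce to the 3-AP bounds.} The quantitative initial-stage bounds from the HKY construction for $K\subseteq C_1\cap C_2$ should depend only on three things:
\begin{itemize}
\item the relative shift $u=s$ between $C_1$ and $C_2$;
\item $\tau(C)$;
\item the ratio $|G|/d$.
\end{itemize}
They should not depend on the absolute position of the pair. In the proof of Theorem~\ref{thm:hky triple intersection} the pair was $C-t, C+t$, with shift $u=2t$, and the argument worked for all $2t\le 0.87d$. Hence for every $0<s\le0.87d$ we obtain a compact $K_s\subseteq C_1\cap C_2$ carrying the same lower bounds on $\tau(K_s)$ and $\diam(K_s)$. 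These bounds are uniform in $s$, and they satisfy $\tau(K_s)\tau(C)\ge1$.

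\textbf{Step 2: interleave $K_s$ with $C_3=C$.} This is the step that genuinely changes. In the symmetric case, $K\subseteq C-t\cup C+t$ sits centered relative to $C$. Here $K_s$ lies in $[a-s(1-\theta),\,b-s(1-\theta)]\cap[a+s\theta,\,b+s\theta]$, where $\conv(C)=[a,b]$. Its position is therefore shifted by $s(\theta-1/2)$ compared with the symmetric case. We must check three things:
\begin{itemize}
\item $\conv(K_s)$ still meets the interior of $\conv(C)$;
\item $K_s$ is not contained in a bounded gap of $C$;
\item $C$ is not contained in a gap of $K_s$.
\end{itemize}
The last is automatic since $|C|=d$ exceeds any bounded gap of $K_s$. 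The first should follow because $K_s\subseteq C_1\cap C_2$ lies inside the overlap $[a+s\theta,\,b-s(1-\theta)]$, which is a subinterval of $(a,b)$ whenever $s<d$.

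For the second, note $K_s\subseteq C+s\theta$. The HKY lower bound gives $\diam(K_s)$ larger than $0.067d\ge|G|$, the same comparison used in Theorem~\ref{thm:hky triple intersection}. So $K_s$ cannot fit in any gap of $C$, regardless of where it sits. Then Proposition~\ref{prop:newhouse-gap-lemma} gives $K_s\cap C\neq\varnothing$, hence $s\in G_{P_\theta}(C)$.

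\textbf{Main obstacle.} I expect the difficulty to be verifying that the diameter and thickness estimates for $K_s$ in the 3-AP proof really depend only on the shift $s$, for every $\theta$. In particular, we need the lower bound on $\diam(K_s)$ to remain valid up to $s=0.87d$, where the overlap of $C_1$ and $C_2$ has length only $0.13d$. My plan is to reread the initial-stage estimate as a statement about the pair $(C,C+s)$ and then translate it. If some step used the symmetric position of $K$ relative to $C$, I would replace it with the weaker containment $\conv(K_s)\subseteq(a,b)$. That containment suffices for interleaving given the gap-size comparison above.
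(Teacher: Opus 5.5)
Your proposal is correct and is essentially the paper's own proof. The paper applies Proposition~\ref{hky corollary 6} to exactly $C_1=C+s\theta$, $C_2=C-s(1-\theta)$, $C_3=C$. The only change from Theorem~\ref{thm:hky triple intersection} is that $|Q|=d-s$ replaces $d-2s$ in condition (ii), which is your ``shift $s$ instead of $2s$'' doubling.

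The worry in your last paragraph is not a real obstacle. The pair $(C+s\theta,\,C-s(1-\theta))$ is a translate of $(C+s/2,\,C-s/2)$, so the bounds $\tau(K_s)\ge\tilde{\varphi}_{t_\star}(\tau)$ and $\diam(K_s)\ge\tilde{\psi}_{t_\star}(\tau)(d-s)\ge 0.5195\cdot 0.13\,d>0.067\,d$ at $s=0.87d$ are precisely the endpoint shift $0.435d$ of Theorem~\ref{thm:hky triple intersection}. Your interleaving checks are exactly what conditions (i)--(iii) of Proposition~\ref{hky corollary 6} encode.
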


\begin{theorem}[Interior of the $4$-Gap Set]\label{thm:hky quadruple intersection}
    Let $C\subset \R$ be a Cantor set with $d=\diam(C)>0$ 
    and thickness $$\tau(C)\geq \frac{1-0.015}{2(0.015)} =  32.83333\ldots $$ 
    Then $G_{AP}^4(C)$ contains the interval $(0,0.3d]$.
\end{theorem}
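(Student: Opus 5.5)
The statement to prove is the last one above, Theorem~\ref{thm:hky quadruple intersection}: $(0,0.3d]\subseteq G^4_{AP}(C)$. The plan follows the route announced in the introduction. First, use the quantitative Hunt--Kan--Yorke input to build a compact set $K_s\subseteq C\cap(C+s)$ of controlled thickness and diameter. Then intersect $K_s$ with a short translate of itself using the Newhouse Gap Lemma. By scaling we may assume $d=1$, and set $\gamma=0.015$, so that $\tau(C)\ge(1-\gamma)/(2\gamma)$.

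\emph{Step 1: the bound on the largest gap.} The theorem, unlike Theorem~\ref{thm:hky triple intersection}, gives no explicit gap hypothesis, so I would first derive one from thickness. If the largest bounded gap $G$ has length $g$, its two bridges have lengths at least $\tau g$, and the three pieces fit inside $[0,1]$. Hence $g(1+2\tau)\le1$, so $g\le 1/(1+2\tau)\le\gamma$. This is why the threshold $(1-\gamma)/(2\gamma)$ appears: it converts exactly into $|G|\le\gamma d$.

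\emph{Step 2: the two-set HKY step.} For $0<s\le 0.3$, apply the initial-stage quantitative form of the HKY construction (Proposition~\ref{hky corollary 6} and the surrounding lemmas of this section, used as in Theorem~\ref{thm:hky triple intersection}) to the interleaved pair $C$ and $C+s$. These sets are interleaved by Lemma~\ref{lem:short-translate-interleaved}. The output should be a compact $K_s\subseteq C\cap(C+s)$ with two uniform bounds: $\tau(K_s)\ge\tau_*(\gamma)$, and $\diam(K_s)\ge D(s,\gamma)$, where $D$ is something like $1-s-O(\gamma)$ coming from the first-stage chunks. Every point of $K_s$ is the second term of a two-term progression with difference $s$ in $C$.

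\emph{Step 3: the second Gap Lemma application.} Take $u=2s$ and apply Lemma~\ref{lem:short-translate-interleaved} to $K_s$ and $K_s+2s$. They are interleaved provided $2s<\diam(K_s)$. Given the thickness bound, Proposition~\ref{prop:newhouse-gap-lemma} then yields $y\in K_s\cap(K_s+2s)$, which requires $\tau(K_s)^2\ge1$. Such a $y$ gives $y,\ y-s,\ y-2s,\ y-3s\in C$, so $s\in G^4_{AP}(C)$. The two conditions to check uniformly for $s\in(0,0.3]$ are therefore $\tau_*(\gamma)\ge1$ and $2s<D(s,\gamma)$. The constant $0.3$ should be roughly where $3s+O(\gamma)$ reaches $1$.

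\emph{Main obstacle.} The hard part is Step 2. It requires extracting explicit, $s$-uniform lower bounds for both the thickness and the diameter of the HKY intersection set, which is the genuinely new quantitative content of the section. The HKY gap-size estimate gives thickness roughly $\tau^2/(1+2\tau)$ or similar for the refined set, and that is comfortably at least $1$ when $\tau\approx 32.8$. The diameter bound is more delicate, because the first-stage chunk selection can discard a gap-sized piece at each end. My first attempt would be to track the first stage concretely: the chunks of $C\cap[s,1]$ and of $(C+s)\cap[s,1]$ that are mutually interleaved, losing at most about $2\gamma(1+1/\tau)$ in total length. I would then verify numerically that $2(0.3)<1-0.3-(\text{loss})$, which gives the interval $(0,0.3d]$.
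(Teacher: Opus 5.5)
There is a genuine gap. Your reduction in Step~3 is logically fine: if $K_s\subseteq C\cap(C+s)$ and $y\in K_s\cap(K_s+2s)$, then $y-3s,y-2s,y-s,y\in C$. The problem is the quantitative Step~2. With the bounds this section actually provides, your pairing does not reach $0.3$.

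The two-set output you need is Proposition~\ref{thm:explicit varphi revised} together with Remark~\ref{rmk:particular case of Lemma 5}; Proposition~\ref{hky corollary 6} is the three-set statement. That output gives $\tau(K_s)\ge\tilde\varphi_t(\tau)$ and $\diam K_s\ge\tilde\psi_t(\tau)\,|Q|$ with $Q=[s,1]$. Your interleaving requirement $2s<\diam K_s$ therefore becomes $s<\tilde\psi_t/(2+\tilde\psi_t)$. At the threshold thickness, $\tilde\varphi_t(\tau)\ge1$ forces $t\gtrsim2.09$; the paper's anchor has $\tilde\varphi_{2.5}(32.8)\approx1.021$ and $\tilde\psi_{2.5}(32.8)\approx0.782$. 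Since $\tilde\psi_t$ decreases in $t$, you get $\tilde\psi_t\lesssim0.81$, hence only $s\lesssim0.288$, and about $0.281$ at $t=2.5$.

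Your heuristics hide this in two places.
\begin{enumerate}[label=\textup{(\alph*)}]
\item The HKY thickness is not ``comfortably'' of size $\tau^2/(1+2\tau)$. The explicit bound is barely above $1$, and that is what fixes the threshold $32.83$. The conversion $|G|\le\gamma d$ from Step~1 plays no role; the four-term argument uses no gap hypothesis at all.
\item The end loss is not about $2\gamma(1+1/\tau)$. With $t\approx2$--$2.5$, an absorbed gap $J$ may sit $t|J|$ beyond the end of $Q$. Lemma~\ref{hky lemma 4} caps each end's loss only by $\sigma(1+t)|J|$, where $\sigma=(\tau-t)/(\tau-2t-1)\approx1.1$. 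Even using $|J|\le1/(1+2\tau)$, the resulting lower bound on $\diam K_s$ is still just below $2s=0.6$ at $s=0.3$.
\end{enumerate}

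The missing idea is to put the long shift into the HKY step and the short shift into the Gap Lemma step, which is what the paper does. Apply the two-set construction to $C$ and $C+2s$. Then $Q=[2s,1]$, and you get $K\subseteq C\cap(C+2s)$ with $\tau(K)\ge\tilde\varphi_t(\tau)\ge1$ and $\diam K\ge\tilde\psi_t(\tau)(1-2s)$. Now intersect $K$ with $K+s\subseteq(C+s)\cap(C+3s)$. Interleaving needs only $s<\tilde\psi_t(1-2s)$, i.e.\ $s<\tilde\psi_t/(1+2\tilde\psi_t)\approx0.305$ at $(\tau,t)=(32.8,2.5)$. The two monotonicity claims then pass this to all larger $\tau$.

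Because $\tilde\psi_t<1$, we have $1+2\tilde\psi_t<2+\tilde\psi_t$. The diameter bound is discounted by the factor $\tilde\psi_t$, but the shift you must beat is not. So it is always better to lose $2s$ in $|Q|$ and need a shift of $s$ than the reverse.

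Your pairing could be rescued only by a new, sharper end-loss estimate, which the proposal does not supply. One possibility is to use that a gap $I$ of $C$ absorbed near $s$ satisfies $\tau|I|\le l(I)\le s+t|I|$.
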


This process is split into two parts: \S\ref{subsec:HKY} which provides a self-contained argument building on the work of Hunt-Kan-Yorke \cite{hunt_kan_yorke} to provide explicit and computable conditions for three and four Cantor sets to intersect and \S\ref{subsec:nonempty gap set} which applies the results of the previous subsection to prove Theorems \ref{thm:hky triple intersection} and \ref{thm:hky quadruple intersection}.

\subsection{Intersection of Three Cantor Sets}\label{subsec:HKY}
In this section, we improve a result of \cite{hunt_kan_yorke}:

\begin{proposition}[Theorem 2, \cite{hunt_kan_yorke}]\label{hky theorem 2}
    There is a function $\varphi(\tau_1,\tau_2)$ which is positive in regions $III$ and $IV$ of Figure \ref{fig:HKY regions} such that for all interleaved compact sets $C_1,C_2\subset\R$ with $\tau(C_1)\geq \tau_1$ and $\tau(C_2)\geq \tau_2$, there is a set $K\subset C_1\cap C_2$ with thickness at least $\varphi(\tau_1,\tau_2)$. 
\end{proposition}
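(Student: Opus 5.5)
This is the Hunt--Kan--Yorke theorem as quoted, and I would reprove it by their chunk-based construction. The output set $K$ is built inside $C_1\cap C_2$ by recursively choosing pairs of chunks, one from each set, that remain interleaved and have comparable sizes.

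\textbf{Setup.} Assume $C_1,C_2$ are interleaved with $\tau(C_i)\ge\tau_i$, where $(\tau_1,\tau_2)$ lies in region $III$ or $IV$ of Figure~\ref{fig:HKY regions}; there $\tau_1\tau_2$ is strictly larger than $1$ by a definite margin. The basic object is an interleaved pair $(P_1,P_2)$ with $P_i\propto C_i$ (or $P_i=C_i$ at the start) and $|P_1|/|P_2|$ confined to a fixed window depending on $\tau_1,\tau_2$.

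\textbf{Refinement step.} Given such a pair, take the largest gap in $\conv(P_1)\cup\conv(P_2)$, say a gap of $P_1$. Interleaving together with the Newhouse Gap Lemma argument (Proposition~\ref{prop:newhouse-gap-lemma}) shows that this gap cannot swallow $P_2$. Removing it splits $P_1$ into two chunks. The thickness bounds $|L|\ge\tau_1|G|$ and $|R|\ge\tau_1|G|$ then let us pick chunks of $P_2$ that interleave with each child and keep the ratio in the window. This yields two disjoint interleaved sub-pairs, separated by a distance comparable to the removed gap.

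\textbf{Building $K$.} Iterating gives a binary tree of nested pairs. Define $K$ as the set of points lying in the intersection along every branch. Each such point lies in $C_1\cap C_2$, because diameters shrink geometrically: each step removes a gap of size bounded below by a fixed fraction of the current size. Next, $K$ is compact. Every chunk of $K$ corresponds to a node of the tree, and its distance to the rest of $K$ is at least the separation gap created at that node. Its diameter is at least a fixed fraction of that gap, controlled by $\tau_1,\tau_2$ and the ratio window. The infimum of these ratios defines $\varphi(\tau_1,\tau_2)>0$.

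\textbf{Main obstacle.} The hard step is showing that the ratio window is invariant under refinement exactly in regions $III$ and $IV$. Concretely, after cutting one set at its largest gap, one must always be able to re-select chunks of the other set so that the new pairs are interleaved and balanced. My first approach is a case analysis on whether the chosen gap belongs to $C_1$ or to $C_2$. In each case I would impose the worst-case geometric inequality, with the gap at a bridge endpoint and the bridge of minimal length $\tau|G|$. These inequalities reduce to algebraic conditions on $(\tau_1,\tau_2)$, which should cut out the boundary curves of regions $III$ and $IV$. I expect that fitting this case analysis precisely to those curves will be the most delicate part.
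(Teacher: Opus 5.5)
There is a genuine gap, and it sits in the step that carries the whole theorem: the lower bound on $\tau(K)$. As written, that step runs in the wrong direction. To get $\tau(K)\ge\varphi$ you need $|P|\ge\varphi\,d(P,K\setminus P)$ for every chunk $P$ of $K$. That requires a lower bound on $|P|$ together with an \emph{upper} bound on the gap of $K$ bordering $P$. You instead state that this distance is \emph{at least} the gap created at the node, and that $|P|$ is at least a fraction of that gap. Two lower bounds give no information about $|P|/d(P,K\setminus P)$.

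Reversing the inequality does not fix things cheaply. The gap of $K$ at a node is not the removed gap $G$ alone. It is $G$ together with everything discarded when you re-select chunks of the other set. Discarding a piece of $P_2$ next to $G$ can expose a piece of $P_1$ that must then be discarded, and so on.

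The paper's Hunt--Kan--Yorke construction is built to control exactly this alternating cascade. Each $K_n$ is grown from its largest original gap by absorbing, alternately, gaps of $C_2$ and of $C_1$. Each absorbed gap lies within $t$ times its own length of the previously absorbed one and is no larger than the previous gap of the same set. Lemma~\ref{hky lemma 4} sums the resulting series, whose ratios are $(1+t)/(\tau_i-t)$ and which converges exactly when $(1+t)^2<(\tau_1-t)(\tau_2-t)$. This gives $r(K_n)-r(G)\le\sigma_2\,(r(H)-r(G))$, which bounds $|K_m|$ above by a multiple of its largest original gap. HKY's Lemma~5 (Lemma~\ref{lem:HKY lemma 5 simplified} in region $IV$) then bounds the chunk length $d(K_m,K_n)$ below by $\psi\, d(G,H)$. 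The positivity of $\psi$, together with the admissible range of $t$, is what produces the region $III$/$IV$ hypotheses.

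Your proposal has no counterpart to either estimate. The invariance of the ``ratio window'' that you defer to a future case analysis is the whole theorem, and a window on $|P_1|/|P_2|$ is not the quantity that matters anyway. Suppose $P_2$ extends only a distance $\delta\ll|G|$ to the left of $G$. Then every chunk $Q_L$ of $P_2$ interleaved with $L$ meets $\conv(L)$ in an interval of length less than $\delta$, whatever $|L|/|Q_L|$ is. So any piece of $K$ produced by the sub-pair $(L,Q_L)$ has diameter less than $\delta$. Yet every other point of $K$ lies across $G$ or across a gap removed at an earlier node, which is no shorter, so it is at distance at least $|G|$. Such pieces must be thrown away. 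That enlarges gaps of $K$ and shrinks its chunks, and bounding both effects uniformly is precisely the content of the two lemmas above.

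A smaller point: it is not true that each step removes a gap whose length is a fixed fraction of the current size, since thickness bounds gaps from above, not from below. The inclusion $K\subset C_1\cap C_2$ should instead come from the fact that only finitely many original gaps exceed any given length, so each one is eventually removed.
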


In practice, this proposition only establishes that $\tau(C)>0$.
To better explain why this is and how we will improve this proposition, we provide a brief overview of its proof.
We postpone introducing some notation until after.

\begin{proof}[Outline of Proof.] 
Let $C_1$ and $C_2$ be sets of sufficiently large thickness; i.e., $\tau_1:=\tau(C_1)$ and $\tau_2:=\tau(C_2)$ lie in regions $III$ or $IV$ of Figure \ref{fig:HKY regions}. 
    \begin{enumerate}[label=(\roman*.)]
        \item Construct a new gap set $(K_n)$ which contains the gaps of $C_1$ and $C_2$. 
        Consequently, $K:=\lp \cup_n K_n\rp^\complement$ is contained in $C_1\cap C_2$.
       \item
        Argue the thickness of $K$ can be determined using the gaps $(K_n)$; i.e.,
        \begin{equation}\label{eq:bound on C}
            \tau(K)\geq\frac{d(K_n,K_m)}{|K_m|}.
        \end{equation}
        In fact, taking the minimum over all such $n,m$ there will exist some function $\varphi(\tau_1,\tau_2)$ such that 
        \begin{equation*}
            \tau(K) \geq \varphi(\tau_1,\tau_2).
        \end{equation*}
        \item
        Bound the denominator of equation \eqref{eq:bound on C} using Lemma \ref{hky lemma 4}.
      \item
      Bound the numerator of equation \eqref{eq:bound on C} using (\cite{hunt_kan_yorke}, Lemma 5). 
      \item\label{enum:combine} Combine the above upper and lower bounds to get 
      \begin{equation*}
          \tau(K) \geq \varphi(\tau_1,\tau_2):= \frac{t_*\lp(\tau_1-t_*)(\tau_2-t_*)-(1+t_*)^2\rp }{(\tau_1-t_*)(\tau_2-t_*)+(1+t_*)^2\lp 2\max(\tau_1,\tau_2)+1\rp }\psi_{t_*}(\tau_1,\tau_2)
      \end{equation*}
      where $0<t_*<\frac{\tau_1\tau_2-1}{\tau_1+\tau_2+2}$ is chosen to maximize $\varphi(\tau_1,\tau_2)$ and 
      \begin{equation*}
          \psi_t(\tau_1,\tau_2):=(1-\lambda_1-\lambda_2)^N\min \left\{ 1-\sigma_1\lambda_1-\sigma_2b_*,1-\sigma_2\lambda_2-(1-\lambda_2)\frac{\sigma_1\lambda_1}{1+\lambda_1} \right\}
      \end{equation*}
      where for $i=1,2$
    \begin{align*}
        \lambda_i := \frac{1+t}{\tau_i+1+t}\quad\text{ and }\quad
        \sigma_i := \frac{(\tau_i-t)(\tau_{3-i}+1)}{(\tau_i-t)(\tau_{3-i}-t)-(1+t)^2}.
    \end{align*}
    \end{enumerate}  
\end{proof}

Let us elaborate on the function $\psi_t(\tau_1,\tau_2)$ from \ref{enum:combine}.
In regions $III$ or $IV$, (\cite{hunt_kan_yorke}, Lemma 5)
establishes the lower bound on the denominator from \eqref{eq:bound on C}; i.e.,
$$d(K_m,K_n)\geq (1-\lambda_1-\lambda_2)^N\min \left\{ 1-\sigma_1\lambda_1-\sigma_2b_*,1-\sigma_2\lambda_2-(1-\lambda_2)\frac{\sigma_1\lambda_1}{1+\lambda_1} \right\}=:\psi_t(\tau_1,\tau_2),$$
for some interdependent large $N\in \N$ and $b_*\in \R_+$.
Observe that as $N$ increases, $\psi_t(\tau_1,\tau_2)$ decreases exponentially to zero and, consequently, the function $\varphi(\tau_1,\tau_2)$---which provides a lower bound for the thickness of $C$---decreases to zero even faster. 
Thus, not only is $\psi_t(\tau_1,\tau_2)$ small, it is also incredibly difficult to calculate explicitly.
Even if one calculates $\psi_t(\tau_1,\tau_2)$ explicitly, they must then substitute it into $\varphi(\tau_1,\tau_2)$ and optimize for $t_*$. 
It is probable that $N=1$ is sufficient for thickness pairs in region $III$, though it would take further investigation to verify this claim.

To overcome these difficulties, we ask what constraints would allow us to consider the simpler $N=0$ case. 
This leads to the discovery of region $IV$ of Figure \ref{fig:HKY regions}, and we prove in Lemma \ref{lem:HKY lemma 5 simplified} that
\begin{equation*}
    \tilde{\psi}_t(\tau_1,\tau_2) := 1-\sigma_1\lambda_1-\sigma_2\lambda_2
\end{equation*}
is sufficient as a lower bound. 
We then substitute this new function into Proposition \ref{hky theorem 2} and drop the assumption that $\varphi(\tau_1,\tau_2)$ has been optimized by $t_*$; consequently, $\tilde{\varphi}_t(\tau_1,\tau_2)$ is calculable and there is a range of valid $t$ that makes $\tilde{\varphi}_t(\tau_1,\tau_2)$ positive and large.
\begin{proposition}\label{thm:explicit varphi revised}
    There is a function 
    \begin{equation}\label{defn:varphi}
        \tilde{\varphi}_t (\tau_1,\tau_2):= \frac{t\lp(\tau_1-t)(\tau_2-t)-(1+t)^2\rp }{(\tau_1-t)(\tau_2-t)+(1+t)^2\lp 2\max(\tau_1,\tau_2)+1\rp }\tilde{\psi}_{t}(\tau_1,\tau_2),
    \end{equation}
    where 
    \begin{equation*}
        \tilde{\psi}_{t}(\tau_1,\tau_2):= 1-\sigma_1\lambda_1-\sigma_2\lambda_2,
    \end{equation*}
    which is positive in region $IV$ for infinitely many $0<t< (\tau_1\tau_2-1)/(\tau_1+\tau_2+2)$ such that for all interleaved compact sets $C_1,C_2\subset\R$ with $\tau(C_1)\geq \tau_1$ and $\tau(C_2)\geq \tau_2$, there is a set $K\subset C_1\cap C_2$ with thickness at least $\tilde{\varphi}_t(\tau_1,\tau_2)$. 
\end{proposition}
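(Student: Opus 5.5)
The plan is to rerun the Hunt--Kan--Yorke construction from the outline of Proposition~\ref{hky theorem 2}, changing only the estimate for the numerator $d(K_n,K_m)$ in \eqref{eq:bound on C}. Steps (i)--(iii) of that outline do not depend on the choice of $N$, so we take them over unchanged. Fix $0<t<(\tau_1\tau_2-1)/(\tau_1+\tau_2+2)$. This is exactly the range in which $(\tau_1-t)(\tau_2-t)>(1+t)^2$, so $\sigma_1,\sigma_2$ are positive and finite, and $\lambda_i=(1+t)/(\tau_i+1+t)\in(0,1)$. From interleaved $C_1,C_2$ with $\tau(C_i)\ge\tau_i$ we build the HKY family of gaps $(K_n)$. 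It contains all gaps of $C_1$ and $C_2$, together with the additional "$t$-enlarged" gaps that HKY remove wherever a gap of one set nearly overlaps a bridge of the other. Set $K=(\bigcup_n K_n)^\complement\subset C_1\cap C_2$. As in step (ii), $\tau(K)\ge\inf_{m\text{ later than }n} d(K_n,K_m)/|K_m|$. As in step (iii), Lemma~\ref{hky lemma 4} bounds $|K_m|$ above by a multiple of the relevant bridge length. That multiple is the rational prefactor
\[
\frac{(\tau_1-t)(\tau_2-t)+(1+t)^2(2\max(\tau_1,\tau_2)+1)}{t\bigl((\tau_1-t)(\tau_2-t)-(1+t)^2\bigr)}.
\]
It is also where the restriction on $t$ enters.

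The new ingredient is Lemma~\ref{lem:HKY lemma 5 simplified}. It is the $N=0$ version of \cite[Lemma~5]{hunt_kan_yorke}: in region $IV$, the normalized distance from a gap $K_n$ to any later gap $K_m$ is at least $\tilde\psi_t=1-\sigma_1\lambda_1-\sigma_2\lambda_2$. The proof I would give runs as follows.

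First, normalize the bridge adjacent to $K_n$ to unit length.

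Second, estimate how much of that bridge can be eaten. The gaps of $C_i$ lying in the bridge near $K_n$ cover at most a proportion $\lambda_i$, since a gap next to a bridge of length $\ge\tau_i\cdot$(gap) has relative size at most $1/(\tau_i+1)$, and the enlargement by $t$ changes this to $(1+t)/(\tau_i+1+t)$. The HKY enlargements inflate each such proportion by at most the factor $\sigma_i$, which is the sum of the geometric series of successive enlargements between the two sets.

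Third, subtract. In the general HKY argument one must iterate $N$ times, because the first gaps encountered may be forced into a chain, and this iteration produces the factor $(1-\lambda_1-\lambda_2)^N$ together with the $b_*$ term. The defining property of region $IV$ is precisely $\sigma_1\lambda_1+\sigma_2\lambda_2<1$. Under that condition the two inflated proportions already leave positive room after a single step, so no iteration is needed, and the minimum in $\psi_t$ is replaced by $\tilde\psi_t$ with $N=0$.

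Step (v) then multiplies the two bounds and gives $\tau(K)\ge\tilde\varphi_t(\tau_1,\tau_2)$ for every admissible $t$. No optimization over $t$ is required, which is why the formula is explicit.

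Finally, we must check positivity for infinitely many $t$. The prefactor is positive on the whole admissible interval. As $t\to0^+$ we have $\lambda_i\to 1/(\tau_i+1)$ and $\sigma_i\to(\tau_i)(\tau_{3-i}+1)/(\tau_1\tau_2-1)$, so
\[
\tilde\psi_0=1-\frac{\tau_1(\tau_2+1)}{(\tau_1+1)(\tau_1\tau_2-1)}-\frac{\tau_2(\tau_1+1)}{(\tau_2+1)(\tau_1\tau_2-1)}.
\]
Taking region $IV$ to be where this quantity is positive, continuity gives $\tilde\psi_t>0$ on a whole interval $(0,t_0)$. That interval contains infinitely many $t$, and $\tilde\varphi_t>0$ there.

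The main obstacle is the simplified Lemma~5 in the third step. It requires checking carefully that, in region $IV$, every configuration HKY handles by iteration, including the nested enlargements alternating between $C_1$ and $C_2$, is already controlled at the first level. I would go through HKY's case analysis of the gap $K_m$ lying on the same side or the opposite side of $K_n$. In each case I would replace their bound $b_*$ by $\lambda_2$ and verify that the second term of their minimum, $1-\sigma_2\lambda_2-(1-\lambda_2)\sigma_1\lambda_1/(1+\lambda_1)$, dominates $1-\sigma_1\lambda_1-\sigma_2\lambda_2$. This last comparison is immediate, because $(1-\lambda_2)/(1+\lambda_1)\le1$.
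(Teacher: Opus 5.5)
Your proposal follows the paper's route: keep the Hunt--Kan--Yorke construction and denominator estimate, replace their Lemma~5 by the $N=0$ bound built from $x_0\le\lambda_1$, $y_0\le\lambda_2$ and Lemma~\ref{hky lemma 4}, and get positivity for small $t$ by continuity from the $t=0$ value of $\tilde\psi_t$. The case-by-case comparison with their minimum in your last paragraph is not needed. After normalizing $d(G,H)=1$, applying Lemma~\ref{hky lemma 4} once on each side already absorbs the whole alternating chain of enlargements, so $d(K_m,K_n)\ge 1-\sigma_1x_0-\sigma_2y_0\ge\tilde\psi_t$ follows directly, exactly as in the paper.
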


\begin{remark}
    One can expand $\tilde{\psi}_t(\tau_1,\tau_2)$ to find
    \begin{align*}
        \tilde{\psi}_t(\tau_1,\tau_2) 
        &= 1-\frac{(\tau_1-t)(\tau_2+1)}{(\tau_1-t)(\tau_2-t)-(1+t)^2}\cdot \frac{1+t}{\tau_1+1+t}-\frac{(\tau_2-t)(\tau_1+1)}{(\tau_1-t)(\tau_2-t)-(1+t)^2}\cdot \frac{1+t}{\tau_2+1+t} \\
        &= \frac{ at^2 +bt +c }{\lb (\tau_1-t)(\tau_2-t)-(1+t)^2 \rb(\tau_1+1+t)(\tau_2+1+t)}
    \end{align*}
    where 
    \begin{align*}
        A &= -\lp 3\tau_1\tau_2+2\tau_1+2\tau_2+1\rp  \\
        B &= -\lp \tau_1^2\tau_2+\tau_1\tau_2^2+8\tau_1\tau_2+4\tau_1+4\tau_2+2\rp  \\
        C &= \tau_1^2\tau_2^2-4\tau_1\tau_2-2\tau_1-2\tau_2-1.
    \end{align*}
    Thus we let $t_{\text{max}}=\min\left\{ \frac{\tau_1\tau_2-1}{\tau_1+\tau_2+2}, \frac{-B+\sqrt{B^2-4AC}}{2A} \right\}$, and it follows that $\tilde{\psi}_t(\tau_1,\tau_2)$ is positive whenever $0<t<t_{\text{max}}$.
\end{remark}

\begin{remark}
    It is known that for $t>0$ sufficiently close to zero, $\tilde{\varphi}_t(\tau_1,\tau_2)$ is positive; however, the exact value of $t$ that maximizes $\tilde{\varphi}_t(\tau_1,\tau_2)$ increases as $\tau_1$, $\tau_2$ increase. 
    
    In our applications (\S\ref{subsec:nonempty gap set}), we fix $\tau_1=\tau_2$ and choose a particular $t$ to demonstrate large thickness $\tilde{\varphi}_t(\tau_1,\tau_1)$.
    We then extend this lower bound to all sets of thickness $\tau\geq\tau_1$. 
\end{remark}

Thus in what follows we construct the function $\tilde{\psi}_t(\tau_1,\tau_2)$, and the result that $$\tau(K)\geq \tilde{\varphi}_t(\tau_1,\tau_2)$$ follows for free.
Our outline is as follows.

In \S\ref{subsubsec:regions}, we introduce our revised thickness conditions for $(\tau_1,\tau_2)$; this is region $IV$ of Figure \ref{fig:HKY regions}.

In \S\ref{subsubsec:Kn}, we review how to construct the set $K$ in $C_1\cap C_2$. In \cite{hunt_kan_yorke}, this is accomplished by first taking the largest gap $G$ of $C_1$, $C_2$ and then expanding it to absorb nearby gaps that are within distance $t|G|$ for some arbitrary $0<t <\frac{\tau_1\tau_2-1}{\tau_1+\tau_2+2}$.
They then repeat this process until no gaps remain.
This creates a new system of gaps $(K_n)$, which are dependent on the number $t$; note that smaller values of $t$ make for smaller gaps $K_n$, while larger values of $t$ make for larger gaps $K_n$. 
Please note that we let the choice of $t$ be arbitrary; we can choose a particular value later.
$K$ is then the complement of the system of gaps $K_n$.

In \S\ref{subsubsec:hky lemma 4}, we restate Lemma \ref{hky lemma 4}. 
This lemma bounds the denominator and aids in bounding the numerator of \eqref{eq:bound on C} by establishing how far $K_n$ can extend past the gaps of $C_1$, $C_2$ that it contains.
As we use this lemma without modification, we leave it as a black box.

In \S\ref{subsubsec:lemma 5 modified}, we prove  our key Lemma \ref{lem:HKY lemma 5 simplified} which bounds the numerator of \eqref{eq:bound on C}; i.e.,
\begin{equation*}
    d(K_n,K_m) \geq \tilde{\psi}_t(\tau_1,\tau_2)\, d(G,H),
\end{equation*}
where 
\begin{align*}
    \tilde{\psi}_t(\tau_1,\tau_2) &:= 1-\sigma_1\lambda_1-\sigma_2\lambda_2
\end{align*}
and for $i=1,2$
\begin{align*}
    \lambda_i := \frac{1+t}{\tau_i+1+t}\quad\text{ and }\quad
    \sigma_i := \frac{(\tau_i-t)(\tau_{3-i}+1)}{(\tau_i-t)(\tau_{3-i}-t)-(1+t)^2}.
\end{align*}

All of the framework included is to make this paper self-contained; we encourage interested readers to review \cite{hunt_kan_yorke} for further details.

\begin{figure}[h!]
    \centering
    {\includegraphics[height=3in]{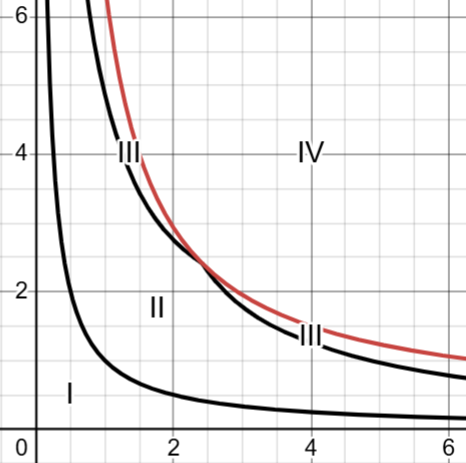}}
    
    \caption{The intersection of two interleaved compact sets with thicknesses $\tau_1$ and $\tau_2$: can be empty for $(\tau_1,\tau_2)\in I$; is nonempty for $(\tau_1,\tau_2)\in II$; contains a set of positive thickness  for $(\tau_1,\tau_2)\in III$; and contains a set with an explicit minimum thickness in $IV$.}
    \label{fig:HKY regions}
\end{figure}

\subsubsection{Regions}\label{subsubsec:regions}
In Figure \ref{fig:HKY regions}, the curve dividing regions $I$ and $II$ is given by $\tau_1\tau_2=1$; it is a consequence of the Newhouse Gap Lemma that sets $C_1$ and $C_2$ with thicknesses above the line $\tau_1\tau_2\geq 1$ have at least one point of intersection; additionally, this result is sharp.

The curves dividing regions $II$ and $III$ are given by the inequalities
\begin{equation*}
    \tau_1 \geq \tau_2, \quad \tau_1>\frac{\tau_2^2+3\tau_2+1}{\tau_2^2}, \quad  \text{ and }\quad  \tau_2 > \frac{(2\tau_1+1)^2}{\tau_1^3}, 
\end{equation*}
or
\begin{equation*}
    \tau_2 \geq \tau_1, \quad   \tau_2>\frac{\tau_1^2+3\tau_1+1}{\tau_1^2}, \quad \text{ and }\quad   \tau_1 > \frac{(2\tau_2+1)^2}{\tau_2^3}.
\end{equation*}
These inequalities come from the original argument in Hunt-Kan-Yorke \cite{hunt_kan_yorke}, and all thickness pairs $(\tau_1,\tau_2)$ in regions $III$ or $IV$ guarantee the intersection of $C_1$ and $C_2$ contain a set of positive thickness.
Moreover, this dividing line is sharp; see \cite{Williams}.

The red line dividing regions $III$ and $IV$ is our new contribution and the focus of this section, \S\ref{sec:HKY section triple intersection}; it is given by 
\begin{equation*}
    \tau_1^2\tau_2^2 - 4\tau_1\tau_2-2\tau_1-2\tau_2-1=0,
\end{equation*}
or equivalently 
\begin{equation*}
    \tau_i = \frac{1+2\tau_j+\sqrt{2\tau_j^3+5\tau_j^2+4\tau_j+1}}{\tau_j^2},
\end{equation*}
for $i\neq j$. 
\textbf{We show that all thickness pairs $(\tau_1,\tau_2)$ in region $IV$ not only contain a set of positive thickness, but we obtain an explicit lower bound for that thickness.}

\subsubsection{Construction of a new system of gaps $K_n$}\label{subsubsec:Kn}
We revisit how to construct the sets $(K_n)$.
Recall that we want the gaps $(K_n)$ to contain all the gaps of $C_1$ and $C_2$ from which it follows that $K\subset C_1\cap C_2$ where $K:=\lp\cup_n K_n\rp^\complement$.

Let $C_1$ and $C_2$ be interleaved compact sets with $\tau(C_1)\geq \tau_1$ and $\tau(C_2)\geq \tau_2$ for some $(\tau_1,\tau_2)$ in region $IV$ of Figure \ref{fig:HKY regions}.

Let the gaps of $C_1$ be $I_0,I_1,I_2,\ldots$, with $I_0$ and $I_1$ unbounded, $I_0$ to the left of $I_1$, and $|I_2|\geq |I_3|\geq\cdots$.
For $C_2$ we define $J_0,J_1,J_2,\ldots$ similarly.
We refer to the intervals $I_n$ and $J_n$ collectively as the ``original gaps.''
Our goal is to construct the complement of $K$ as a union of disjoint open intervals $K_0, K_1,K_2,\ldots$ with $K_0$ and $K_1$ unbounded, and with every original gap contained in some $K_m$ whence $K\subset C_1\cap C_2$. 
To get a lower bound on the thickness of $K$, 
observe that every chunk $P$ of $K$ is bordered on each side by a gap of $K$, with at least one of the bordering gaps being bounded. 
Pick a chunk $P$, and say $P$ is bordered by $K_m$ and $K_n$ with $m>n$ and $m\geq 2$. 
Then 
$$\frac{|P|}{d(P,K\setminus P)} = \frac{d(K_m,K_n)}{\min(|K_m|,|K_n|)} \geq \frac{d(K_m,K_n)}{|K_m|}.$$
Recalling that thickness is defined as 
$$\tau(K)=\inf_{P\propto K} \frac{|P|}{d(P,K\setminus P)}, $$
we see that finding a universal lower bound of $\frac{d(K_m,K_n)}{|K_m|}$ would ensure that the thickness is also bounded.
Hence, Hunt-Kan-Yorke show that there exists some $\tilde{\varphi}_t(\tau_1,\tau_2)>0$ such that whenever $m>n$ and $m\geq 2$,
\begin{equation}\label{eq: Km,Kn varphi bound}
    \frac{d(K_m,K_n)}{|K_m|} \geq \tilde{\varphi}_t(\tau_1,\tau_2). 
\end{equation}
In this paper, we focus on improving the bound of the numerator (i.e., bounding $d(K_m,K_n)$) through improving the bound on $\tilde{\psi}_t(\tau_1,\tau_2)$. 

We return to the construction of the $(K_n)$ by finding a pair of original gaps $I_*$ and $J_*$ between which $K$ will lie; that is $I_*$ and $J_*$ will be contained in $K_0$ and $K_1$. 
The properties we desire of $I_*$ and $J_*$ are that they are a positive distance apart, that all gaps of $C_1$ with an endpoint between the closures of $I_*$ and $J_*$ are bounded and no larger than $I_*$ and likewise (in comparison to $J_*$) for gaps of $C_2$ between $I_*$ and $J_*$. 
We will show later that once $I_*$ and $J_*$ have been determined, the diameter of $K$ can be bounded below by a constant depending on $\tau_1$ and $\tau_2$ times the distance between $I_*$ and $J_*$. 

\begin{figure}
\begin{tikzpicture}[
    line style/.style={dashed, thick},
    tick/.style={shift={(0,0.1)}, -={Bar[width=6pt]}}
]

    % ==========================================
    % CASE 1
    % ==========================================
    \node[scale=1.2] at (-1.5, 2.45) {Case 1 $\left\{\vphantom{\begin{array}{c}1\\2\end{array}}\right.$};
    \node at (-0.3, 2.8) {$S_1$:};
    \node at (-0.3, 2.1) {$S_2$:};

    % Case 1 - S1 intervals
    \draw[line style, <-] (0, 2.8) -- (4, 2.8);
    \draw[thick] (4, 2.9) -- (4, 2.7); % Manual precise vertical bar
    \node[below] at (2, 2.8) {$I_* = I_0$};
    
    \draw[line style, ->] (9, 2.8) -- (10.5, 2.8);
    \draw[thick] (9, 2.9) -- (9, 2.7); % Manual precise vertical bar
    \node[below] at (9.75, 2.8) {$I_1$};

    % Case 1 - S2 intervals
    \draw[line style, <-] (0, 2.1) -- (1.1, 2.1);
    \draw[thick] (1.1, 2.2) -- (1.1, 2.0);
    \node[below] at (0.55, 2.1) {$J_0$};
    
    \draw[line style, ->] (6.8, 2.1) -- (10.5, 2.1);
    \draw[thick] (6.8, 2.2) -- (6.8, 2.0);
    \node[below] at (8.15, 2.1) {$J_* = J_1$};

    % ==========================================
    % CASE 2 (Y-coordinates shifted up by 1.1 units)
    % ==========================================
    \node[scale=1.2] at (-1.5, 0.75) {Case 2 $\left\{\vphantom{\begin{array}{c}1\\2\end{array}}\right.$};
    \node at (-0.3, 1.1) {$S_1$:};
    \node at (-0.3, 0.4) {$S_2$:};

    % Case 2 - S1 intervals
    \draw[line style, <-] (0, 1.1) -- (4, 1.1);
    \draw[thick] (4, 1.2) -- (4, 1.0);
    \node[below] at (2, 1.1) {$I_0$};
    
    \draw[line style, ->] (6.8, 1.1) -- (10.5, 1.1);
    \draw[thick] (6.8, 1.2) -- (6.8, 1.0);
    \node[below] at (8.65, 1.1) {$I_* = I_1$};

    % Case 2 - S2 intervals
    \draw[line style, <-] (0, 0.4) -- (1.1, 0.4);
    \draw[thick] (1.1, 0.5) -- (1.1, 0.3);
    \node[below] at (0.55, 0.4) {$J_0$};
    
    \draw[line style] (3.7, 0.4) -- (5.7, 0.4);
    \draw[thick] (3.7, 0.5) -- (3.7, 0.3); % Left bound of J*
    \draw[thick] (5.7, 0.5) -- (5.7, 0.3); % Right bound of J*
    \node[below] at (4.7, 0.4) {$J_*$};
    
    \draw[line style, ->] (9, 0.4) -- (10.5, 0.4);
    \draw[thick] (9, 0.5) -- (9, 0.3);
    \node[below] at (9.75, 0.4) {$J_1$};

\end{tikzpicture}
\caption{Cases in the construction of $I_*$ and $J_*$.}\label{fig:hky6}
\end{figure}

Assume without loss of generality that $J_0\subset I_0$. 
If $I_1\subset J_1$, Case 1 of Figure \ref{fig:hky6}, then $I_*=I_0$ and $J_*=J_1$ have the above properties; they must be separated by a positive distance since $C_1$ and $C_2$ are interleaved. 
If $J_1\subset I_1$, Case 2 of Figure \ref{fig:hky6}, let $J_*$ be the largest gap of $C_2$ with an endpoint between $I_0$ and $I_1$, and let $I_*$ be whichever of $I_0$ and $I_1$ is farthest from $J_*$. 
At least one of $I_0$ and $I_1$ must be a positive distance from $J_*$ since $C_1$ and $C_2$ are interleaved.

Next, let $t$ be a positive constant whose precise value will be chosen later; for now we assume that $0<t<(\tau_1\tau_2-1)/(\tau_1+\tau_2+2)<\min(\tau_1,\tau_2)$. 
Assume without loss of generality that $I_*$ lies to the left of $J_*$. 
We begin constructing $K_0$ by requiring that it contain $I_*$. 
We then require that $K_0$ contain the rightmost bounded $J_n$ with $d(I_*,J_n)\leq t|J_n|$ (we leave the verification that there is a rightmost gap satisfying this condition to the original authors). 
If there does not exist such a $J_n$ that is not already contained in $I_*$, we stop the construction and let $K_0=I_*$. 
Otherwise, we further require that $K_0$ contain the rightmost bounded $I_m$ that is within $t$ times its length of the previously added $J_n$. 
Again, if this requirement does not extend $K_0$ any farther rightward, we stop the construction.
If not, we then add to $K_0$ the rightmost $J_l$ which is within $t$ times its length of $I_m$ and is at most as large as $J_n$ (see Figure \ref{fig:hky7}). 
If a next step is necessary, we consider gaps of $C_1$ which are no larger than $I_m$, and so forth. 
We may have to continue this process infinitely often, but if so we must converge to a right endpoint for $K_0$, since there is no way this construction can extend past the rightmost point in $C_1\cup C_2$. 

\begin{figure}
\centering
\begin{tikzpicture}[
    % Universal style for the interval lines
    line style/.style={dashed, thick}
]

    % Labels on the left
    \node at (-0.3, 1.4) {$S_1$:};
    \node at (-0.3, 0.7) {$S_2$:};
    \node at (-0.3, 0.0) {$S$:};

    % ==========================================
    % ROW S1
    % ==========================================
    % Interval I_*
    \draw[line style, <-] (0, 1.4) -- (2.85, 1.4);
    \draw[thick] (2.85, 1.5) -- (2.85, 1.3);
    \node[below] at (1.425, 1.4) {$I_*$};
    
    % Interval I_m
    \draw[line style] (5.65, 1.4) -- (6.55, 1.4);
    \draw[thick] (5.65, 1.5) -- (5.65, 1.3);
    \draw[thick] (6.55, 1.5) -- (6.55, 1.3);
    \node[below] at (6.1, 1.4) {$I_m$};
    
    % Small empty interval/ticks right after I_m
    \draw[line style] (7.2, 1.4) -- (7.45, 1.4);
    \draw[thick] (7.2, 1.5) -- (7.2, 1.3);
    \draw[thick] (7.45, 1.5) -- (7.45, 1.3);

    % Infinite ray at the end of S1
    \draw[line style, ->] (9.4, 1.4) -- (10.5, 1.4);
    \draw[thick] (9.4, 1.5) -- (9.4, 1.3);

    % ==========================================
    % ROW S2
    % ==========================================
    % Interval J_n
    \draw[line style] (3.8, 0.7) -- (5.2, 0.7);
    \draw[thick] (3.8, 0.8) -- (3.8, 0.6);
    \draw[thick] (5.2, 0.8) -- (5.2, 0.6);
    \node[below] at (4.5, 0.7) {$J_n$};
    
    % Interval J_l
    \draw[line style] (6.55, 0.7) -- (7.1, 0.7);
    \draw[thick] (6.55, 0.8) -- (6.55, 0.6);
    \draw[thick] (7.1, 0.8) -- (7.1, 0.6);
    \node[below] at (6.825, 0.7) {$J_l$};
    
    % Very thin empty interval right after J_l
    \draw[line style] (7.5, 0.7) -- (7.6, 0.7);
    \draw[thick] (7.5, 0.8) -- (7.5, 0.6);
    \draw[thick] (7.6, 0.8) -- (7.6, 0.6);

    % Bounded interval at the end of S2
    \draw[line style] (8.45, 0.7) -- (10.5, 0.7);
    \draw[thick] (8.45, 0.8) -- (8.45, 0.6);
    \draw[thick] (10.5, 0.8) -- (10.5, 0.6);

    % ==========================================
    % ROW S
    % ==========================================
    % Interval K_0
    \draw[line style, <-] (0, 0.0) -- (7.65, 0.0);
    \draw[thick] (7.65, 0.1) -- (7.65, -0.1);
    \node[below] at (3.825, 0.0) {$K_0$};

\end{tikzpicture}
\caption{The construction of $K_0$.}
\label{fig:hky7}
\end{figure}

We define $K_1$ similarly, starting with the requirement that $K_1$ contain $J_*$ and extending $K_1$ to the left if necessary in the same way we constructed $K_0$. 

Next, to construct $K_2$ we first require that it contain the largest original gap (choose any one in the case of a tie) not contained in $K_0\cup K_1$ (if no such gap exists, we leave $K_2$ undefined and let $S$ be the complement of $K_0\cup K_1$). 
Then we extend it on both the left and right in the same manner as before, but considering only gaps that are at most as large as the one we started with, to obtain the endpoints of $K_2$. 
We next start with the largest original gap not contained in $K_0\cup K_1\cup K_2$, proceeding similarly to define $K_3$, and so forth. 
Any given original gap must eventually be contained in some $K_n$ because there can only be finitely many original gaps that are as large or larger than the given one. 
We do not yet know that the $K_n$ are disjoint from each other; this will follow when we prove \eqref{eq: Km,Kn varphi bound}, though.

Let us now examine our construction more closely. 
Define $l(I)$ and $r(I)$ to be the left and right endpoints of an interval $I$, respectively. 
For a given $K_n$, let $G_0$ be the gap we started with in its construction, which for $n\geq 2$ must be the largest original gap it contains, or at least tied for the largest. 
For simplicity and without loss of generality we assume that $G_0$ is a gap of $C_1$. 
Consider the collection $E$ of all $J_n$ gaps of $C_2$ with $|J_n|\leq |G_0|$, $r(J_n)>r(G_0)$, and $d(G_0,J_n)\leq t|J_n|$. 
We claim---see \cite{hunt_kan_yorke} for details---that the members of $E$ are increasing in size from left to right.

We likewise define $G_2$ to be the rightmost gap of $C_1$ which is at most as large as $G_0$ and lies within $t$ times its length of $G_1$; again, if no such gap exists with $r(G_2)>r(G_1)$ we say that $|G_2|=0$ and $r(G_2)=r(G_1)$. 
Next, to define $G_3$ we consider only gaps of $S_2$ which are at most as large as $G_1$, for $G_4$ we look only at gaps of $C_1$ no larger than $G_2$, and so forth. 
Define $G_{-1}, G_{-2},\cdots$ similarly to be the leftmost (and largest) gaps added to $K_n$ at each stage of the process extending $K_n$ leftward. 
Then we may think of the open interval $K_n$ as being defined by 
\begin{equation*}
    l(K_n)=\lim_{m\rightarrow-\infty} l(G_m), \quad r(K_n)=\lim_{m\rightarrow\infty} r(G_m).
\end{equation*}
Each limit exists because it is the limit of a bounded monotonic sequence.

In the above construction, the even-numbered $G_m$ are gaps of $C_1$ and the odd-numbered ones are gaps of $C_2$, but if $G_0$ had been a gap of $C_2$ it would be the other way around.
In any case, $G_0$ is the largest even-numbered $G_m$ and either $G_1$ or $G_{-1}$ is the largest odd-numbered one. 
Also, the even-numbered $G_m$ decrease monotonically in size as one moves either rightward or leftward from the largest, and the same statement holds for the odd-numbered $G_m$. 
We call a given $G_m$ either a ``1-gap'' or ``2-gap'' of $K_n$ according to whether it is a gap of $C_1$ or $C_2$. 
Notice that not all original gaps contained in $K_n$ are 1-gaps or 2-gaps, only those that have been given a label $G_m$ in the construction of $K_n$. 
When we refer henceforth to left-to-right ordering or adjacency among the 1-gaps and 2-gaps of a given $K_n$, it is with respect to the ordering $\ldots, G_{-2},G_{-1},G_0,G_1,G_2,\ldots$. 
Thus, 1-gaps can only be adjacent to 2-gaps and vice versa.

This concludes our review of the construction of the gaps $(K_n)$. 
The above notation will be used in the following lemmas, and further information can be found in \cite{hunt_kan_yorke}.

\subsubsection{Black Box Lemma}\label{subsubsec:hky lemma 4}
The following lemma is used in bounding both the numerator and denominator of the left side of inequality \eqref{eq: Km,Kn varphi bound}.
For all $m\geq 0$, it establishes a bound on how far $K_n$ can extend to the right of $G_m$ in terms of how far $G_{m+1}$ extends past $G_m$, and similarly for $m\leq 0$ on the left. 
\begin{lemma}[Lemma 4, \cite{hunt_kan_yorke}]\label{hky lemma 4}
    Assume $0<t<(\tau_1\tau_2-1)/(\tau_1+\tau_2+2) $. 
    Let $$\sigma_1=\frac{(\tau_1-t)(\tau_2+1)}{(\tau_1-t)(\tau_2-t)-(1+t)^2}$$ 
    and
    $$\sigma_2 = \frac{(\tau_2-t)(\tau_1+1)}{(\tau_1-t)(\tau_2-t)-(1+t)^2}.$$
    Let $G$ be a $1$-gap of $K_n$ which is at least as large as all $1$-gaps of $K_n$ to its right. 
    Let $H$ be the next $2$-gap of $K_n$ to the right of $G$. 
    Then $$r(K_n)-r(G)\leq \sigma_2(r(H)-r(G)).$$
    The same statement with ``$1$'' and ``$2$'' interchanged holds, as do the corresponding results for left endpoints.
\end{lemma}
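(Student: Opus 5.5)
Write $G=G_{2j}$ for a $1$-gap with $j\ge0$; for $j\le0$ the left-endpoint version is the mirror image. Then $H=G_{2j+1}$. Put $x_1:=r(H)-r(G)$ and, for $k\ge2$, $x_k:=r(G_{2j+k})-r(G_{2j+k-1})$. Since $r(K_n)=\lim_m r(G_m)$ and the $r(G_m)$ are nondecreasing, the plan is to telescope:
$$r(K_n)-r(G)=\sum_{k\ge1}x_k.$$
It then suffices to prove an alternating geometric decay
$$x_{k+1}\le a_{(k)}\,x_k,\qquad a_1:=\frac{1+t}{\tau_1-t},\quad a_2:=\frac{1+t}{\tau_2-t},$$
where $a_{(k)}=a_1$ when $G_{2j+k+1}$ is a $1$-gap and $a_{(k)}=a_2$ when it is a $2$-gap. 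If the construction stops at some stage, the convention $|G_m|=0$, $r(G_m)=r(G_{m-1})$ makes all later $x_k$ vanish, so the bounds hold trivially.

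The single-step estimate goes as follows. Take three consecutive labelled gaps $G_m,G_{m+1},G_{m+2}$ with $m\ge 2j$. Then $G_m$ and $G_{m+2}$ are gaps of the same set $C_i$, with $G_{m+2}$ to the right of $G_m$ and $|G_{m+2}|\le|G_m|$. The size condition comes from the construction together with the hypothesis that $G$ dominates all $1$-gaps to its right, and from the monotonicity of the odd-labelled gaps away from the largest one (this is the place to check carefully that the $2$-gaps to the right of $H$ are no larger than $H$). The bridge of $C_i$ separating two gaps has length at least $\tau_i$ times the smaller gap, so
$$l(G_{m+2})-r(G_m)\ge\tau_i|G_{m+2}|.$$
The construction rule gives $l(G_{m+2})\le r(G_{m+1})+t|G_{m+2}|$. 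Combining the two yields $(\tau_i-t)|G_{m+2}|\le r(G_{m+1})-r(G_m)$. Moreover $r(G_{m+2})-r(G_{m+1})\le t|G_{m+2}|+|G_{m+2}|$. Hence
$$x_{\text{next}}\le\frac{1+t}{\tau_i-t}\,x_{\text{prev}},$$
which is exactly $x_{k+1}\le a_{(k)}x_k$.

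Starting from $H$, the next gap $G_{2j+2}$ is a $1$-gap, so $x_2\le a_1x_1$; after that the ratios alternate $a_2,a_1,\dots$. Summing in pairs gives
$$\sum_k x_k\le x_1(1+a_1)\sum_{p\ge0}(a_1a_2)^p=x_1\,\frac{1+a_1}{1-a_1a_2}.$$
The geometric series converges because $a_1a_2<1$, which is equivalent to $(\tau_1-t)(\tau_2-t)>(1+t)^2$, i.e.\ to $t<(\tau_1\tau_2-1)/(\tau_1+\tau_2+2)$; this is also why $t<\min(\tau_1,\tau_2)$ keeps the $a_i$ positive. A direct simplification gives
$$\frac{1+a_1}{1-a_1a_2}=\frac{(\tau_1+1)(\tau_2-t)}{(\tau_1-t)(\tau_2-t)-(1+t)^2}=\sigma_2,$$
as claimed. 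The statements with $1$ and $2$ interchanged, and for left endpoints, follow by symmetry.

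The step I expect to be the main obstacle is justifying the bridge inequality at every stage. This requires confirming that $G_m$ and $G_{m+2}$ are genuinely separated by a bridge of $C_i$ bounded below by $\tau_i\min(|G_m|,|G_{m+2}|)$, and that the smaller one is $G_{m+2}$. Here I would use the chunk formulation of thickness applied to the chunk of $C_i$ lying between the two gaps. I would also need to rule out that $G_{m+2}$ overlaps or sits inside $G_m$; this is excluded because each new gap must satisfy $r(G_{m+2})>r(G_{m+1})\ge r(G_m)$, and distinct gaps of the same set are disjoint.
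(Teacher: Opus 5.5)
Your proof is correct. The paper does not prove this lemma itself (it imports it from Hunt--Kan--Yorke as a black box), but your one-step bound $x_{k+1}\le\frac{1+t}{\tau_i-t}\,x_k$ is exactly the recursion the paper derives in the proof of Lemma~\ref{lem:HKY lemma 5 simplified}, and summing it as an alternating geometric series to get $(1+a_1)/(1-a_1a_2)=\sigma_2$ is the standard argument.

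The size monotonicity you flagged is imposed by the construction itself for all labels $m\ge0$: each new labelled gap must be no larger than the previous labelled gap of the same set. So your argument covers the range the paper assigns to the lemma---right endpoints from $G_m$ with $m\ge0$, and left endpoints with $m\le0$. It also runs verbatim from $G=G_{-1}$, where the lemma's hypothesis is what supplies $|G_1|\le|G_{-1}|$.
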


Lemma \ref{hky lemma 4} is a key tool in proving both the original Lemma 5, \cite{hunt_kan_yorke} and our modification (Lemma \ref{lem:HKY lemma 5 simplified}) below; hence, we will treat this lemma as a black box in what follows. 

\subsubsection{Explicit formula for $\tilde{\psi}_t(\tau_1,\tau_2)$}\label{subsubsec:lemma 5 modified}
As established above, we want to show that the left side of \eqref{eq: Km,Kn varphi bound} is positive for all $m>n$ and $m\geq 2$. 
Lemma \ref{lem:HKY lemma 5 simplified} provides an improved bound for the denominator $d(K_n,K_m)$ in region $IV$:

\begin{lemma}[]\label{lem:HKY lemma 5 simplified}
    There exists a function 
    \begin{align*}
        \tilde{\psi}_t(\tau_1,\tau_2) &:= 1-\sigma_1\lambda_1-\sigma_2\lambda_2
    \end{align*}
    that is positive whenever $(\tau_1,\tau_2)\in IV$, for which the following statement holds.
    For $m\neq n$, let $G$ be a $1$-gap of $K_m$ and $H$ be a $2$-gap of $K_n$. 
    If all $1$-gaps of $K_m$ or $K_n$ with at least one endpoint between the closures of $G$ and $H$ are bounded and at most as large as $G$, and all similarly situated $2$-gaps are bounded and at most as large as $H$, then 
    $$d(K_n,K_m)\geq \tilde{\psi}_t(\tau_1,\tau_2)d(G,H).$$
\end{lemma}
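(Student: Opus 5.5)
Assume without loss of generality that $G$ lies to the left of $H$, and that $K_m$ is the $K$-gap containing $G$ and $K_n$ the one containing $H$; the mirror configuration follows by reflection, which swaps left and right endpoints in Lemma~\ref{hky lemma 4}. The plan is to cut the distance into three pieces:
\[
d(K_m,K_n)=l(K_n)-r(K_m)=d(G,H)-\bigl(r(K_m)-r(G)\bigr)-\bigl(l(H)-l(K_n)\bigr).
\]
So it suffices to show the two overhangs satisfy
\[
r(K_m)-r(G)\le\sigma_2\lambda_2\,d(G,H),\qquad l(H)-l(K_n)\le\sigma_1\lambda_1\,d(G,H).
\]
Subtracting these from $d(G,H)$ gives exactly $\tilde\psi_t\,d(G,H)$. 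This is the "$N=0$" version of the Hunt--Kan--Yorke Lemma~5: we never iterate along the chain of $G_j$'s.

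For the right overhang of $K_m$: the hypotheses say every $1$-gap of $K_m$ to the right of $G$ that has an endpoint before $H$ is no larger than $G$. Any $1$-gap of $K_m$ lying beyond $H$ would force $K_m$ to contain $H$, which is impossible since $m\ne n$ once disjointness is in force. Hence $G$ is at least as large as all $1$-gaps of $K_m$ to its right, and Lemma~\ref{hky lemma 4} gives
\[
r(K_m)-r(G)\le\sigma_2\bigl(r(H')-r(G)\bigr),
\]
where $H'$ is the next $2$-gap of $K_m$ to the right of $G$. If no such gap exists, the overhang is $0$. Since $H'$ is a gap of $C_2$ with an endpoint between $G$ and $H$, the hypothesis gives $|H'|\le|H|$. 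Also $d(G,H')\le t|H'|$, and $H'$ lies to the left of $H$.

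The key estimate is then
\[
r(H')-r(G)\le \lambda_2\, d(G,H),\qquad \lambda_2=\frac{1+t}{\tau_2+1+t}.
\]
To prove it, use $\tau(C_2)\ge\tau_2$. The bridge of $C_2$ between $H'$ and the larger-or-equal gap $H$ has length at least $\tau_2|H'|$, so $d(H',H)\ge\tau_2|H'|$. Meanwhile $r(H')-r(G)\le d(G,H')+|H'|\le (1+t)|H'|$. Adding,
\[
d(G,H)\ge r(H')-r(G)+d(H',H)\ge \bigl(1+t+\tau_2\bigr)|H'|,
\]
and therefore
\[
r(H')-r(G)\le\frac{1+t}{\tau_2+1+t}\,d(G,H).
\]
The case where $H'$ overlaps $G$ or ties in size with $H$ needs a short separate check. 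The left overhang of $K_n$ past $H$ is handled symmetrically: apply Lemma~\ref{hky lemma 4} with $1$ and $2$ interchanged and for left endpoints, giving factors $\sigma_1$ and $\lambda_1$.

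I expect the main obstacle to be the bridge estimate $d(H',H)\ge\tau_2|H'|$ together with the disjointness issue. The thickness bound needs every $C_2$-gap between $H'$ and $H$ to be no larger than $H'$; otherwise one must compare against an intermediate larger gap, which only helps but has to be argued. The claim that $K_m$ cannot swallow $H$ is circular with the eventual disjointness of the $K_j$. I would resolve it as Hunt--Kan--Yorke do: prove the estimate inductively in order of construction, so that the bound itself certifies the $K$'s are separated. Finally, positivity of $\tilde\psi_t$ in region $IV$ follows from the Remark's quadratic: its constant term $C=\tau_1^2\tau_2^2-4\tau_1\tau_2-2\tau_1-2\tau_2-1>0$ exactly defines region $IV$, so $\tilde\psi_t>0$ for small $t>0$.
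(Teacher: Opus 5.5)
Your proposal is correct and follows essentially the same route as the paper. The paper also stops at the first ($i=0$, i.e.\ $N=0$) step: it bounds the right overhang of $K_m$ past $G$ and the left overhang of $K_n$ past $H$ via Lemma~\ref{hky lemma 4} by $\sigma_2 y_0$ and $\sigma_1 x_0$, bounds $y_0\le\lambda_2\, d(G,H)$ and $x_0\le\lambda_1\, d(G,H)$ by combining the $(1+t)|\cdot|$ proximity estimate with the thickness estimate exactly as you do, and then proves positivity by evaluating at $t=0$ and invoking continuity. Your $\sigma$-pairing is the one Lemma~\ref{hky lemma 4} actually gives; the paper's intermediate tail-sum display has $\sigma_1$ and $\sigma_2$ swapped before reaching the same final bound, and the disjointness circularity you flag is likewise left unaddressed in the paper's proof.
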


\begin{proof}[Proof of Lemma \ref{lem:HKY lemma 5 simplified}]
Let $G$ be a $1$-gap of $K_m$ and $H$ be a $2$-gap of $K_n$ satisfying the hypotheses. 
We assume without loss of generality that $(\tau_1,\tau_2)\in IV$, so 
$$\tau_i > \frac{1+2\tau_j+\sqrt{2\tau_j^3+5\tau_j^2+4\tau_j+1}}{\tau_j^2},$$
for $i\neq j$. 
If $d(G,H)=0$, the inequality to be proven is trivial. 
Otherwise, let us normalize $d(G,H)$ to one, and assume $G$ lies to the left of $H$. 
Let $G_0=G$ and $H_0=H$. 
Let $G_1$ be the $1$-gap of $K_n$ adjacent to $H_0$ on its left, and let $H_1$ be the $2$-gap of $K_m$ adjacent to $G_0$ on its right. 
Let $G_2$ be the adjacent $1$-gap of $K_m$ rightward from $H_1$, and likewise define $H_2, G_3, H_3,\ldots$.
For $i\geq 0$ let 
$$
x_i = \begin{cases}
l(H_i)-l(G_{i+1}), & i \text{ even}, \\
r(G_{i+1})-r(H_i), & i \text{ odd},
\end{cases}
$$
and 
$$
y_i = \begin{cases}
r(H_{i+1})-r(G_{i}), & i \text{ even}, \\
l(G_{i})-l(H_{i+1}), & i \text{ odd}.
\end{cases}
$$
Let $R_i=d(G_i,H_i)$; then $R_0=1$ and $R_{i+1}=\max(R_i-x_i-y_i,0)$ for $I\geq 0$. 
Let $R_\infty$ be the limit as $i$ goes to infinity of $R_i$. 
Then $d(K_m,K_n)=R_\infty$, so we wish to show that there is a positive lower bound on $R_\infty$ which depends only on $\tau$ and $t$. 

\begin{figure}
\begin{tikzpicture}[
    % Keep the style consistent with previous versions
    line style/.style={dashed, thick}
]

    % Labels on the left
    \node at (-0.3, 1.4) {$S_1$:};
    \node at (-0.3, 0.0) {$S_2$:};

    % ==========================================
    % TOP LAYOUT: Overall Dimension & S1 Intervals
    % ==========================================
    % Top overall dimension "1"
    \draw[<->, thick] (2.5, 2.1) -- (10.5, 2.1);
    \node[fill=white] at (6.5, 2.1) {1};

    % Interval G_0
    \draw[line style] (0.5, 1.4) -- (2.5, 1.4);
    \draw[thick] (0.5, 1.55) -- (0.5, 1.25);
    \draw[thick] (2.5, 1.55) -- (2.5, 1.25);
    \node[above] at (1.5, 1.5) {$G_0$};

    % Interval G_2
    \draw[line style] (4.5, 1.4) -- (5.5, 1.4);
    \draw[thick] (4.5, 1.55) -- (4.5, 1.25);
    \draw[thick] (5.5, 1.55) -- (5.5, 1.25);
    \node[above] at (5, 1.4) {$G_2$};

    % Interval G_1
    \draw[line style] (8.5, 1.4) -- (10.1, 1.4);
    \draw[thick] (8.5, 1.55) -- (8.5, 1.25);
    \draw[thick] (10.1, 1.55) -- (10.1, 1.25);
    \node[above] at (9.3, 1.4) {$G_1$};

    % ==========================================
    % MIDDLE LAYOUT: Clean X and Y Dimensions
    % ==========================================
    % LEVEL 1 (y = 0.65) -> x_1 and x_0 Level
    \draw[<->, thick] (4, 0.65) -- (5.5, 0.65);
    \node[fill=white, inner sep=1.5pt] at (4.75, 0.65) {$x_1$};

    \draw[<->, thick] (8.5, 0.65) -- (10.5, 0.65);
    \node[fill=white, inner sep=1.5pt] at (9.5, 0.65) {$x_0$};

    % LEVEL 2 (y = 0.35) -> y_0 and y_1 Level (Perfect Horizontal Shared Axis)
    \draw[<->, thick] (2.5, 0.35) -- (4, 0.35);
    \node[fill=white, inner sep=1.5pt] at (3.25, 0.35) {$y_0$};

    % Fixed: Spans exactly from left of H2 (7.2) to left of G1 (8.5)
    \draw[<->, thick] (7.2, 0.35) -- (8.5, 0.35);
    \node[fill=white, inner sep=1.5pt] at (7.85, 0.35) {$y_1$};

    % ==========================================
    % BOTTOM LAYOUT: S2 Intervals
    % ==========================================
    % Interval H_1
    \draw[line style] (3, 0.0) -- (4, 0.0);
    \draw[thick] (3, 0.15) -- (3, -0.15);
    \draw[thick] (4, 0.15) -- (4, -0.15);
    \node[below] at (3.5, -0.1) {$H_1$};

    % Interval H_2
    \draw[line style] (7.2, 0.0) -- (8, 0.0);
    \draw[thick] (7.2, 0.15) -- (7.2, -0.15);
    \draw[thick] (8, 0.15) -- (8, -0.15);
    \node[below] at (7.6, -0.1) {$H_2$};

    % Interval H_0
    \draw[line style] (10.5, 0.0) -- (12.2, 0.0);
    \draw[thick] (10.5, 0.15) -- (10.5, -0.15);
    \draw[thick] (12.2, 0.15) -- (12.2, -0.15);
    \node[below] at (11.35, -0.1) {$H_0$};

\end{tikzpicture}
\caption{The gaps $G_i$ and $H_i$.}\label{fig:hky9}
\end{figure}

We first find bounds for $x_i$ and $y_i$ in terms of each other. 
For all even $i\geq 0$, $G_i$ are $1$-gaps of $K_n$, $H_{i+1}$ are $2$-gaps of $K_m$, and $|G_i|\geq |G_{i+2}|$; see Figure \ref{fig:hky9}.
Then
\begin{equation*}
    \tau_1 |G_{i+2}| \leq d(G_i,G_{i+2}) \leq d(H_{i+1}, G_{i+2}) + r(H_{i+1})-r(G_i) \leq t |G_{i+2}| + r(H_{i+1})-r(G_i),
\end{equation*}
where the first inequality follows from the definition of thickness and the last inequality follows from construction of the $1$- and $2$-gaps; i.e., $G_{i+2}$ is the largest gap of $S_1$ with size at most $|G_i|$ within distance $t|G_{i+2}|$ of $H_{i+1}$.
Because $t\leq \tau_1$, we re-arrange as 
\begin{equation*}
    |G_{i+2}|\leq \frac{r(H_{i+1})-r(G_i)}{\tau_1-t}.
\end{equation*}
Hence, 
\begin{align*}
    x_{i+1}
    &=r(G_{i+2})-r(H_{i+1}) 
    \leq |G_{i+2}| + d(H_{i+1}, G_{i+2}) \\
    &\leq (1+t) |G_{i+2}| 
    \leq \frac{1+t}{\tau_1-t}\lp r(H_{i+1}) - r(G_i)\rp
    = \frac{1+t}{\tau_1-t}y_i. 
\end{align*}
A similar argument works for odd $i$; consequently,
\begin{equation}\label{eq:x i+1}
    x_{i+1}\leq\frac{1+t}{\tau_1-t}y_i.
\end{equation}
Similarly, for $y_i$
\begin{equation}\label{eq:y i+1}
    y_{i+1} \leq\frac{1+t}{\tau_2-t}x_i.
\end{equation}
Furthermore, by Lemma 4 \cite{hunt_kan_yorke} we have  
\begin{equation*}
    y_i+x_{i+1}+y_{i+2}+\cdots\leq \sigma_1 y_i
\end{equation*}
and
\begin{equation*}
    x_i+y_{i+1}+x_{i+2}+\cdots\leq \sigma_2 x_i.
\end{equation*}
Thus, for each $i$, 
\begin{equation}\label{eq:R infinity}
    d(K_m,K_n)=R_\infty \geq R_i-x_i-y_i-x_{i+1}-y_{i+1}-\cdots\geq R_i -\sigma_1x_i-\sigma_2y_i,
\end{equation}
We will show that for some $i$, the right side of \eqref{eq:R infinity} is positive. 
In fact, in region $IV$ we can take $i=0$. 

Let us obtain upper bounds on $x_0$ and $y_0$. 
We know 
\begin{equation}\label{eq:x0 - first}
    x_0=l(H_0)-l(G_1)\leq |g_1|+d(G_1,H_0)\leq (1+t)|G_1|,
\end{equation}
and by hypothesis $|G_1|\leq|G_0|$, so
\begin{equation}\label{eq:x0 - second}
    x_0=l(H_0)-r(G_0) -[l(G_1)-r(G_0)] = 1-d(G_0,G_1)\leq 1-\tau_1 |G_1|.
\end{equation}
Eliminating $|G_1|$ from these inequalities yields
\begin{equation}\label{eq:x0 upper bound}
    x_0\leq \frac{1+t}{\tau_1+1+t}=:\lambda_1.
\end{equation}
Similarly,
\begin{equation}\label{eq:y0 upper bound}
    y_0\leq \frac{1+t}{\tau_2+1+t}=:\lambda_2.
\end{equation}
We can obtain similar bounds on $x_i$ and $y_i$ for $i\geq 1$, but it is no longer necessary. 

We will show that at $t=0$, 
$$1-\sigma_1\lambda_1-\sigma_2\lambda_2>0,$$
from which it holds for $t$ sufficiently near $0$ by continuity. 
This in turn will imply 
$$d(K_m,K_n)\geq R_0-\sigma_1x_0-\sigma_2y_0 \geq1-\sigma_1\lambda_1-\sigma_2\lambda_2>0.$$
Observe that at $t=0$, 
\begin{align*}
    1-\sigma_1\lambda_1-\sigma_2\lambda_2 
    &= 1-\frac{\tau_1(\tau_2+1)}{\tau_1\tau_2-1}\frac{1}{\tau_1+1} - \frac{\tau_2(\tau_1+1)}{\tau_1\tau_2-1} \frac{1}{\tau_2+1} \\
    &= \frac{(\tau_1\tau_2-1)(\tau_1+1)(\tau_2+1)-\tau_1(\tau_2+1)^2-\tau_2(\tau_1+1)^2}{(\tau_1\tau_2-1)(\tau_1+1)(\tau_2+1)} \\
    &= \frac{\tau_1^2\tau_2^2-4\tau_1\tau_2-2\tau_1-2\tau_2-1}{(\tau_1\tau_2-1)(\tau_1+1)(\tau_2+1)}.
\end{align*}
Hence, $1-\sigma_1\lambda_1-\sigma_2\lambda_2>0$ if and only if $(\tau_i^2)\tau_j^2+(-2-4\tau_i)\tau_j + (-1-2\tau_i)>0$ for $i\neq j$ which is true so long as 
$$\tau_i> \frac{1+2\tau_j+\sqrt{2\tau_j^3+5\tau_j^2+4\tau_j+1}}{\tau_j^2},$$
for $i\neq j$. 
Thus, $1-\sigma_1\lambda_1-\sigma_2\lambda_2>0$ for $t=0$ and $(\tau_1,\tau_2)$ in region $IV$. 
Thus, $d(K_m,K_n)\geq 1-\sigma_1\lambda_1-\sigma_2\lambda_2>0$ for $t$ sufficiently near $0$. Recalling that $d(G,H)$ was normalized to $1$, the conclusion follows.  
\end{proof}

\begin{remark}\label{rmk:particular case of Lemma 5}
    We emphasize a particular case of Lemma \ref{lem:HKY lemma 5 simplified} as we use it in \S\ref{subsec:nonempty gap set}.

    Let $C_1$, $C_2$ be two compact sets with thicknesses in region $IV$ such that the intersection of their convex hulls, written $Q:=\conv(C_1)\cap\conv(C_2)$, contains neither $C_1$ nor $C_2$.
    Then Proposition \ref{thm:explicit varphi revised} applies and we obtain a compact set $K\subset C_1\cap C_2$ constructed from the gaps $(K_n)$; in particular, we can apply Lemma \ref{lem:HKY lemma 5 simplified} to the unbounded gaps $K_0$ and $K_1$ of $K$ from whence $$d(K_0,K_1)\geq \tilde{\psi}_t(\tau_1,\tau_2)d(I_*, J_*).$$ 
    Because $Q$ contains neither $C_1$ nor $C_2$, we are in Case 1 of Figure \ref{fig:hky6}; this implies $$|\conv(K)| \geq \tilde{\psi}_t(\tau_1,\tau_2)|Q|.$$
\end{remark}

\begin{remark}\label{rmk: tau1=tau2}
    In the special case where $\tau:=\tau_1=\tau_2$, we have
    \begin{equation}\label{eq:psit(tau)}
        \tilde{\psi}_t(\tau) 
        = 1-2\frac{\tau-t}{\tau-2t-1}\frac{1+t}{\tau+1+t} 
        = \frac{\tau^2-2\tau-1-t(3\tau+1)}{(\tau-2t-1)(\tau+1+t)},
    \end{equation}
    which is positive so long as $0< t < \frac{\tau^2-2\tau-1}{3\tau+1}$. 
    Consequently, referring to the definition of $\tilde{\varphi}_t(\tau_1,\tau_2)$ in \eqref{defn:varphi}, we can simplify $\tilde{\varphi}_t$ as 
\begin{equation}\label{eq:phit(tau)}
        \tilde{\varphi}_t(\tau)
        = \frac{t\lb (\tau-t)^2-(1+t)^2\rb \tilde{\psi}_{t}(\tau)}{(\tau-t)^2+(1+t)^2(2\tau+1)}  
         = \frac{t(\tau-2t-1)\tilde{\psi}_{t}(\tau)}{\tau+1+2t(t+1)},\\
         \end{equation}
    which is positive so long as $0<t<\frac{\tau-1}{2}$ and $\tilde{\psi}_t(\tau) >0$. 

   Putting this together, $\tilde{\varphi}_t(\tau)$ is positive so long as 
    $$0<t<\frac{\tau^2-2\tau-1}{3\tau+1}<\frac{\tau-1}{2}.$$
\end{remark}

As a corollary, Hunt-Kan-Yorke explain when the Gap Lemma can be applied to the set $K\subset C_1\cap C_2$ and some other set $C_3$. 
The argument follows by explaining how to apply the Gap Lemma \ref{prop:newhouse-gap-lemma} to the sets $K$ and $C_3$. 

\begin{proposition}[Corollary 6, \cite{hunt_kan_yorke}]\label{hky corollary 6}
    Let $C_1$ and $C_2$ be two interleaved compact sets whose thicknesses $(\tau_1,\tau_2)$ lie in region $IV$ and for which the intersection $Q$ of their convex hulls contains neither $C_1$ nor $C_2$. 
    If $C_3$ is a compact set with largest bounded gap $G$ such that
    \begin{enumerate}[label=(\roman*)]
        \item the convex hull of $C_3$ contains $Q$,
        \item $|G|<\tilde{\psi}_t(\tau_1,\tau_2)|Q|$, and
        \item $\tau(C_3)\tilde{\varphi}_t(\tau_1,\tau_2)\geq1$,
    \end{enumerate}
    for $0<t<t_{\text{max}}$.
    Then $C_1\cap C_2\cap C_3$ is nonempty.
\end{proposition}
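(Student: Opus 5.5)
The plan is to produce a set $K\subseteq C_1\cap C_2$ with controlled thickness and controlled convex hull, and then intersect it with $C_3$ using the Newhouse Gap Lemma (Proposition~\ref{prop:newhouse-gap-lemma}).

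\textbf{Step 1: build $K$.} Since $(\tau_1,\tau_2)$ lies in region $IV$ and $C_1,C_2$ are interleaved, Proposition~\ref{thm:explicit varphi revised} applies for the chosen $0<t<t_{\max}$. It yields a compact set $K\subseteq C_1\cap C_2$ with
$$\tau(K)\ge\tilde\varphi_t(\tau_1,\tau_2).$$
The set $K$ is the complement of the gap system $(K_n)$ built from the starting gaps $I_*,J_*$.

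\textbf{Step 2: locate and size $\conv(K)$.} Because $Q$ contains neither $C_1$ nor $C_2$, we are in Case~1 of Figure~\ref{fig:hky6}. Concretely, after relabeling, $J_0\subset I_0$ and $I_1\subset J_1$, so $I_*=I_0$ and $J_*=J_1$, and $d(I_*,J_*)=|Q|$. By Remark~\ref{rmk:particular case of Lemma 5}, i.e.\ Lemma~\ref{lem:HKY lemma 5 simplified} applied to the unbounded gaps $K_0\supseteq I_*$ and $K_1\supseteq J_*$,
$$|\conv(K)|=d(K_0,K_1)\ge\tilde\psi_t(\tau_1,\tau_2)\,|Q|>0.$$
In particular $K$ is nonempty and not a single point. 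Moreover $K_0\supseteq I_0$ and $K_1\supseteq J_1$, so $\conv(K)\subseteq Q$.

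\textbf{Step 3: verify interleaving with $C_3$.} I will show that $K$ and $C_3$ are interleaved, using hypotheses (i) and (ii).
\begin{itemize}
\item By (i), $\conv(K)\subseteq Q\subseteq\conv(C_3)$, so $K$ does not lie in an unbounded gap of $C_3$.
\item By (ii), every bounded gap of $C_3$ has length at most $|G|<\tilde\psi_t|Q|\le|\conv(K)|$. Hence $K$ cannot lie inside the closure of a bounded gap of $C_3$.
\item Conversely, $C_3$ cannot lie in a gap of $K$: the unbounded gaps of $K$ miss $Q\subseteq\conv(C_3)$, and a bounded gap of $K$ lies strictly inside $\conv(K)\subseteq\conv(C_3)$.
\end{itemize}
A little care is needed to upgrade ``neither set lies in a gap of the other'' to ``each meets the interior of the other's hull''. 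The danger is the endpoint-touching case. When $\conv(K)$ sits at an end of $\conv(C_3)$, an endpoint of $K$ coincides with an endpoint of $C_3$, and then $K\cap C_3\ne\varnothing$ holds directly. Otherwise the strict inequality in (ii) forces a point of $C_3$ into the interior of $\conv(K)$, and symmetrically a point of $K$ into the interior of $\conv(C_3)$.

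\textbf{Step 4: conclude.} By (iii), $\tau(K)\tau(C_3)\ge\tilde\varphi_t\,\tau(C_3)\ge1$. The Gap Lemma then gives $K\cap C_3\ne\varnothing$, hence $C_1\cap C_2\cap C_3\ne\varnothing$.

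The main obstacle is Step 2: justifying that $\conv(K)$ lies inside $Q$ and has length at least $\tilde\psi_t|Q|$. This requires checking that Lemma~\ref{lem:HKY lemma 5 simplified}'s hypothesis holds for $G=I_*$ and $H=J_*$. That hypothesis asks that all gaps of $C_1,C_2$ with an endpoint between them be bounded and no larger than $I_*$, $J_*$ respectively. It holds because in Case~1 those starting gaps are unbounded.
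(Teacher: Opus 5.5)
Your proposal is correct and follows the paper's argument. You obtain $K\subseteq C_1\cap C_2$ from Proposition~\ref{thm:explicit varphi revised}, bound $|\conv(K)|\ge\tilde\psi_t(\tau_1,\tau_2)|Q|$ via Remark~\ref{rmk:particular case of Lemma 5}, use (i) and (ii) to show neither set lies in a gap of the other, and use (iii) for the Gap Lemma; your handling of the endpoint-touching case is more careful than the paper's.

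One small slip: the unbounded gaps of $K$ need not miss $Q$, since the HKY construction lets $K_0$ absorb gaps of $C_2$ reaching into $Q$. The conclusion still holds, because $C_3$ contains both endpoints of $\conv(C_3)\supseteq\conv(K)$ and so cannot lie in either unbounded gap of $K$.
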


\begin{proof}
    Indeed, let $C_1$, $C_2$ be two compact sets with thicknesses in region $IV$. 
    Apply Proposition \ref{thm:explicit varphi revised} to $C_1$ and $C_2$ to obtain the set $K\subset C_1\cap C_2$ with minimum thickness $\tilde{\varphi}_{t_*}(\tau_1,\tau_2)$, for some $0<t_*<t_{\text{max}}$. 
    To apply the Gap Lemma \ref{prop:newhouse-gap-lemma}, we need to establish that neither $C_3$ nor $K$ lie in a gap of the other and that the product of their thicknesses is greater than or equal to $1$. 
    
    To see that neither set lies in a gap of the other, we first need information about the diameter of $K$. 
    Recall that $K$ lies between the unbounded gaps $K_0$ and $K_1$, so we use Remark \ref{rmk:particular case of Lemma 5} to obtain 
    $$|\conv(K)| \geq \tilde{\psi}_t(\tau_1,\tau_2)|Q|,$$
    where $Q:=\conv(C_1)\cap \conv(C_2)$. 
    Thus condition (i) guarantees  
    $$C\subset Q\subset \conv(C_3),$$
    and condition (ii) guarantees the largest gap $G$ of $C_3$ is not wider than $\conv(K)$:
    $$|G|<\tilde{\psi}_t(\tau_1,\tau_2)|Q|<|\conv(K)|.$$
    Hence, we conclude that neither $K$ nor $C_3$ lie in a gap of the other. 
    
    Lastly, condition (iii) establishes that the product of the thicknesses is greater than one; i.e., 
    \begin{equation}\label{lookATthis}
    \tilde{\varphi}_t(\tau_1,\tau_2)\tau(K)\tau(C_3)\geq \tau(C_3)\geq 1.
    \end{equation}
    
    Thus we conclude $C_1\cap C_2\cap C_3$ is nonempty.
\end{proof}

An analogous argument can be used to obtain quadruple intersections $C_1\cap C_2\cap C_3\cap C_4\neq\varnothing$, for compact subsets $C_i\subset \R$, by applying Proposition \ref{hky corollary 6} to $C_1$, $C_2$ and $C_3$, $C_4$ to obtain new compact sets $K_1\subset C_1\cap C_2$ and $K_2\subset C_3\cap C_4$ of thickness $\tau(K_1),\tau(K_2)\geq 1$.
One can then apply the Gap Lemma \ref{prop:newhouse-gap-lemma} to $K_1$ and $K_2$ to show $C_1\cap C_2\cap C_3\cap C_4\neq\varnothing$.
However, this case requires strict conditions on the convex hulls $\conv(C_i)$ to ensure that the sets $K_1$ and $K_2$ are interleaved.
Thus, we show only the special case which proves the existence of $4$-term arithmetic progressions; i.e., $C_i:= C+ i s $ for $0\leq i\leq 3$. 

\begin{proposition}[four-term arithmetic progression criterion]
\label{prop:four-AP-HKY-general}
Let $C\subset\R$ be compact and $0<t<\frac{\tau^2-2\tau-1}{3\tau+1}$ be such that
$$\tilde{\varphi}_t(\tau)\geq1,$$
where $\tau:=\tau(C)$.
It necessarily follows that $\tau\gg 1+\sqrt{2}$.
Then for $d=\diam(C)>0$,
$$
\left[0,
\frac{\tilde{\psi}_t(\tau)}{1+2\tilde{\psi}_t(\tau)}d
\right]
\subset G^4_{AP}(C).
$$
\end{proposition}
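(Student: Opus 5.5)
Fix $s$ with $0<s\le \frac{\tilde\psi_t(\tau)}{1+2\tilde\psi_t(\tau)}d$. I will look for a single point $y$ with $y\in C\cap(C+s)\cap(C+2s)\cap(C+3s)$. Then $y-3s,\,y-2s,\,y-s,\,y\in C$, so $s\in G^4_{AP}(C)$. (The value $s=0$ in the stated interval is degenerate and can be ignored.) The idea is to pair the four translates into two couples. Each couple comes with one thick subset produced by the Hunt--Kan--Yorke construction, and the second couple is just a translate of the first.

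First I check that region $IV$ applies. With $\tau_1=\tau_2=\tau$, the defining polynomial of the red curve factors as
$$\tau^4-4\tau^2-4\tau-1=(\tau^2+2\tau+1)(\tau^2-2\tau-1).$$
Hence $(\tau,\tau)\in IV$ if and only if $\tau>1+\sqrt2$. The hypothesis $\tilde\varphi_t(\tau)\ge1>0$ together with $0<t<\frac{\tau^2-2\tau-1}{3\tau+1}$ forces $\tau^2-2\tau-1>0$, and this is exactly the claimed necessity $\tau>1+\sqrt2$. Next, $s<d$, since $\frac{\psi}{1+2\psi}<\frac12$. So Lemma~\ref{lem:short-translate-interleaved} shows that $C_1=C$ and $C_2=C+s$ are interleaved. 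Their hull intersection is $Q=\conv(C)\cap\conv(C+s)$, which has length $d-s$. Because $s>0$, $Q$ contains neither $C$ nor $C+s$, so we are in Case~1 of the construction. Proposition~\ref{thm:explicit varphi revised} and Remark~\ref{rmk:particular case of Lemma 5} then give a compact $K\subset C\cap(C+s)$ with
$$\tau(K)\ge\tilde\varphi_t(\tau)\ge1,\qquad \diam(K)\ge\tilde\psi_t(\tau)(d-s).$$

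Now set $K'=K+2s\subset(C+2s)\cap(C+3s)$. Since $K'$ is a translate of $K$, it has the same thickness, so $\tau(K)\tau(K')\ge1$. The condition $s<\frac{\psi}{1+2\psi}d$ is equivalent to $2s<\psi(d-s)\le\diam(K)$. So when $s$ lies strictly inside the interval, Lemma~\ref{lem:short-translate-interleaved} with $u=2s$ shows that $K$ and $K'$ are interleaved. The Newhouse Gap Lemma (Proposition~\ref{prop:newhouse-gap-lemma}) then gives a point $y\in K\cap K'$, which finishes the interior case.

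The main obstacle is the right endpoint. There $2s=\psi(d-s)$, and this might equal $\diam(K)$ exactly, so the short-translate lemma does not apply. I would handle it by a closedness argument. Take $s_n\uparrow s$ with $s_n$ in the open range, and progressions $x_n,\,x_n+s_n,\,x_n+2s_n,\,x_n+3s_n$ in $C$. The $x_n$ lie in the compact set $C$, so pass to a convergent subsequence $x_n\to x$. Since $C$ is closed, $x,\,x+s,\,x+2s,\,x+3s\in C$ with $s>0$. This shows that $G^4_{AP}(C)\cap(0,\infty)$ is closed, so the endpoint is included.
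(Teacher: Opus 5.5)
Your proposal has a genuine gap at the step where you claim that $s<\frac{\tilde\psi_t(\tau)}{1+2\tilde\psi_t(\tau)}d$ is equivalent to $2s<\tilde\psi_t(\tau)(d-s)$. Write $\psi=\tilde\psi_t(\tau)$. Then $2s<\psi(d-s)$ is equivalent to $s<\frac{\psi}{2+\psi}d$, not to $s<\frac{\psi}{1+2\psi}d$.

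These two bounds differ. We have $0<\psi<1$, because the subtracted term $\sigma_1\lambda_1+\sigma_2\lambda_2$ is positive. Hence $2+\psi>1+2\psi$, and so $\frac{\psi}{2+\psi}<\frac{\psi}{1+2\psi}$. For $s\in\bigl(\frac{\psi}{2+\psi}d,\frac{\psi}{1+2\psi}d\bigr]$, the only information you have is $\diam(K)\ge\psi(d-s)$. That does not guarantee $2s<\diam(K)$, so you cannot conclude that $K$ and $K+2s$ are interleaved.

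As written, your argument proves only $\bigl(0,\frac{\psi}{2+\psi}d\bigr]\subset G^4_{AP}(C)$. With the paper's anchor value $\psi\approx0.78197$, this is about $(0,0.281d]$. That is not even enough for the $(0,0.3d]$ in Theorem~\ref{thm:hky quadruple intersection}.

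The missing idea is the choice of pairing. The paper applies the construction to $C$ and $C+2s$. These are interleaved since $2s<d$, and their hulls overlap in an interval of length $d-2s$. This gives $K\subseteq C\cap(C+2s)$ with $\diam(K)\ge\psi(d-2s)$. The paper then uses the short translate $K+s\subseteq(C+s)\cap(C+3s)$. The interleaving requirement becomes $s\le\psi(d-2s)$, which is exactly $s\le\frac{\psi}{1+2\psi}d$.

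In general, $\psi(d-2s)-s$ exceeds $\psi(d-s)-2s$ by $s(1-\psi)>0$. So pairing the outer translates and shifting by the short step $s$ always beats pairing adjacent translates and shifting by $2s$.

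The rest of your proposal is fine: the region~$IV$ factorization, Case~1 of the construction, the Gap Lemma step, and the closedness argument at the endpoint. Your closedness argument is actually a cleaner treatment of the boundary value than the paper's. The paper invokes Lemma~\ref{lem:short-translate-interleaved}, which needs $u<\diam(K)$ strictly, using only the non-strict inequality $s\le\diam(K)$.
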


\begin{proof}
We will show $C\cap (C+s)\cap (C+2s)\cap (C+3s)\neq \varnothing$ for $s\in \lb0,\frac{\tilde{\psi}_t(\tau)}{1+2\tilde{\psi}_t(\tau)}d\rb$ by first applying Proposition \ref{thm:explicit varphi revised} to the intersection $C\cap (C+2s)$. 
As thickness is invariant under similarities and arithmetic-progression gaps scale linearly, it is enough to treat the normalization $\conv(C)=[0,1]$.  
Because $0<t<\frac{\tau^2-2\tau-1}{3\tau+1}$, we know $\tilde{\psi}_t(\tau)>0$, and by hypothesis $\tilde{\varphi}_t(\tau)>1$.
Let
\begin{equation}\label{eq: s bounds}
0\leq s\leq \frac{\tilde{\psi}_t(\tau)}{1+2\tilde{\psi}_t(\tau)}.
\end{equation}
Since $\tilde{\psi}_t(\tau)/(1+2\tilde{\psi}_t(\tau))<1/2$, we have $s<1/2$.  
Therefore the sets
$$ C_0:=C,\qquad C_2:=C+2s $$
are interleaved: the right endpoint $1\in C_0$ lies in $(2s,1+2s)=\operatorname{int}\conv(C_2)$, and the left endpoint $2s\in C_2$ lies in $(0,1)=\operatorname{int}\conv(C_0)$.
Thus we apply Proposition \ref{thm:explicit varphi revised} to $C_0$ and $C_2$ and obtain a compact set
$$
K\subseteq C\cap(C+2s)
$$
for which
$$
\tau(K)\ge\tilde{\varphi}_t(\tau),
$$
and utilizing Remark \ref{rmk:particular case of Lemma 5}
$$
\diam(K)=|\conv(K)|\ge\tilde{\psi}_t(\tau)(1-2s).
$$
The choice of $s$ is in \eqref{eq: s bounds} equivalent to
$$
\tilde{\psi}_t(\tau)(1-2s)\geq s;
$$
consequently, $s\leq\diam(K)$.

We now investigate the set $(C+s)\cap (C+3s)$ by translating our set $K$ from above:
Translating the inclusion $K\subseteq C\cap(C+2s)$ gives
$$
K+s\subseteq(C+s)\cap(C+3s),
\qquad
\tau(K+s)=\tau(K)\ge\tilde{\varphi}_t(\tau).
$$
Because $s\leq\diam(K)$, Lemma~\ref{lem:short-translate-interleaved} shows that $K$ and $K+s$ are interleaved.  
Moreover,
$$
\tau(K)\tau(K+s)=\tau(K)^2\ge\tilde{\varphi}_t(\tau)^2\geq1.
$$
Then applying the Newhouse Gap Lemma gives a point
$$x\in K\cap(K+s).$$
Since $x\in K\subseteq C\cap(C+2s)$, we have
$$x\in C,\qquad x-2s\in C.$$
Since $x\in K+s$, the point $x-s$ belongs to $K$; consequently
$$x-s\in C,\qquad x-3s\in C.$$
Thus
$$x-3s,\quad x-2s,\quad x-s,\quad x$$
is a four-term arithmetic progression in $C$ with common difference $s$; hence $s\in G^4_{AP}(C)$.

Rescaling restores the factor $d$.
\end{proof}

This process of combining the Gap Lemma with the HKY criterion can be repeated for intersections of five or more sets; however, there is significant attrition of necessary information (e.g., exact length and position of convex hulls) that make this style of argument deteriorate. 
Thus we conclude our modifications of \cite{hunt_kan_yorke} here.

\subsection{Non-Empty Gap Set}\label{subsec:nonempty gap set}

In this section, we prove Theorem \ref{thm:hky triple intersection}; i.e., the gap set,
\begin{equation*}
    G^3_{AP}(C):= \left\{ t>0 :\, \exists \, x \text{ so that } x,x+t,x+2t \in C \right\},
\end{equation*}
for sufficiently thick $C$---with conditions on the largest gap $G_1$---contains the interval $(0,0.432]$.
With our new Lemma \ref{lem:HKY lemma 5 simplified} and Proposition \ref{thm:explicit varphi revised}, we now do so by utilizing Proposition \ref{hky corollary 6}:

Notice that if $(C-s)\cap C\cap (C+s)\neq \varnothing$ for a shift $s>0$, then $C$ contains a $3$-term arithmetic progression $\{c-s,c,c+s\}$ for some $c\in C$; hence, we consider the case where $C_1=C-s$, $C_2=C+s$, and $C_3=C$ for a shift $s>0$. 
Because thickness is translation invariant, we know $\tau:=\tau_1=\tau_2=\tau(C)$; hence, we use the simplified $\tilde{\varphi}_t(\tau)$ and $\tilde{\psi}_t(\tau)$ in equations \eqref{eq:phit(tau)} and \eqref{eq:psit(tau)}, respectively.

We prove Theorem \ref{thm:hky triple intersection} for a fixed thickness $\tau_\star$ and a fixed $t_\star$. 
Then, we extend to sets of larger thickness $\tau\geq\tau_\star$ by claiming that
$$\tilde{\psi}_{t_\star}(\tau)\geq \tilde{\psi}_{t_\star}(\tau_\star)\quad\text{and}\quad \tilde{\varphi}_{t_\star}(\tau)\geq \tilde{\varphi}_{t_\star}(\tau_\star,)$$
for all $\tau\geq \tau_\star$.

We begin by verifying the above claims. 

\begin{claim}\label{clm: psi increases for fixed t}
    For fixed $0<t<\frac{\tau-1}{2}$, $\tilde{\psi}_t(\tau)$ increases as $\tau$ increases.
\end{claim}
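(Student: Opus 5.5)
The plan is to work directly with the closed form \eqref{eq:psit(tau)} from Remark~\ref{rmk: tau1=tau2},
$$\tilde{\psi}_t(\tau)=1-2(1+t)\,\frac{\tau-t}{\tau-2t-1}\cdot\frac{1}{\tau+1+t},$$
and show that the subtracted term is strictly decreasing in $\tau$. The coefficient $2(1+t)>0$ does not depend on $\tau$, so it suffices to handle the $\tau$-dependent product. I will first record where this formula comes from, to be sure the simplification is legitimate. With $\tau_1=\tau_2=\tau$ one has $(\tau-t)^2-(1+t)^2=(\tau-2t-1)(\tau+1)$, so the factor $\tau+1$ cancels in $\sigma_1=\sigma_2$, giving $\sigma_i=(\tau-t)/(\tau-2t-1)$. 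Also $\lambda_i=(1+t)/(\tau+1+t)$.

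Next I fix $t$ and note that the hypothesis $0<t<(\tau-1)/2$ is exactly $\tau>2t+1$. Since $t$ is held fixed, this condition persists as $\tau$ increases, so the whole ray $\tau>2t+1$ lies in the admissible range. On this ray I write
$$\frac{\tau-t}{\tau-2t-1}=1+\frac{1+t}{\tau-2t-1}.$$
This expression is positive and strictly decreasing in $\tau$, because $1+t>0$ and the denominator $\tau-2t-1$ is positive and increasing. Likewise $1/(\tau+1+t)$ is positive and strictly decreasing.

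A product of two positive decreasing functions is decreasing. Hence $\sigma_1\lambda_1+\sigma_2\lambda_2=2\sigma_1\lambda_1$ strictly decreases in $\tau$, and so $\tilde{\psi}_t(\tau)$ strictly increases. If one prefers a derivative check, differentiating the logarithm of the product gives
$$-\frac{1+t}{(\tau-t)(\tau-2t-1)}-\frac{1}{\tau+1+t}<0.$$

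There is no real obstacle here. The only point needing care is to confirm that every factor keeps its sign on the whole range $\tau>2t+1$. This is what lets the monotonicity argument for positive functions apply without any case split.
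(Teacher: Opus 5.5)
Your argument is correct, and it follows a different route from the paper's. The paper rewrites $\tilde\psi_t(\tau)$ as a single rational function $\frac{\tau^2-(2+3t)\tau-1-t}{\tau^2-t\tau-(2t+1)(t+1)}$ and applies the quotient rule. It then shows that the numerator of the derivative is an upward-opening quadratic in $\tau$ with negative discriminant.

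You instead keep the factored form $1-2\sigma\lambda$, with $\sigma=1+\frac{1+t}{\tau-2t-1}$ and $\lambda=\frac{1+t}{\tau+1+t}$. You observe that on the ray $\tau>2t+1$ both factors are positive and strictly decreasing, so their product is too.

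Your version avoids all polynomial bookkeeping and shows the mechanism directly: $\sigma_i$ and $\lambda_i$ each shrink as the thickness grows. It is also less error-prone. The paper's quotient-rule computation has a small slip: the linear coefficient of the derivative's numerator should be $-4t(t+1)$, not $-t(4t+5)$, so the discriminant is $-16(1+t)^3(2t+1)$. The slip happens not to affect the sign conclusion.

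The paper's version does show the derivative's numerator is positive for every real $\tau$, whatever the signs of the factors. That extra generality is not needed on the admissible range $\tau>2t+1$, where your positivity argument applies directly.
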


\begin{proof}
    Fix $0<t<\frac{\tau-1}{2}$. 
    Then by equation \eqref{eq:psit(tau)}, $$\tilde{\psi}_t(\tau) = \frac{\tau^2-(2+3t)\tau-1-t}{\tau^2-t\tau-(2t+1)(t+1)}. $$
    Taking the derivative with respect to $\tau$ yields
    \begin{align*}
        \tilde{\psi}_t'(\tau) &= \frac{\splitfrac{\lb\tau^2-t\tau-(2t+1)(t+1)\rb \lb 2\tau -(2+3t)\rb}{-\lb \tau^2 -(2+3t)\tau-1-t\rb\lb2\tau-t\rb}}{\lb \tau^2-t\tau-(2t+1)(t+1)\rb^2} \\
        &= \frac{2(1+t)\tau^2-t(4t+5)\tau +2(1+t)(3t^2+3t+1)}{\lb \tau^2-t\tau-(2t+1)(t+1)\rb^2}.
    \end{align*}
    Thus, $\tilde{\psi}_t'(\tau)$ is positive whenever $n_t(\tau) =2(1+t)\tau^2-t(4t+5)\tau +2(1+t)(3t^2+3t+1)$ is positive. 
    Observe that $n_t(\tau)$ is an upward facing parabola with negative discriminant $D= -32t^4-104t^3-135t^2-90t-16$; hence, $n_t(\tau)$ is always positive which means $\tilde{\psi}_t(\tau)$ is increasing for fixed $t$. 
\end{proof}

\begin{claim}\label{clm: varphi increases for fixed t}
    For fixed $0<t<\frac{\tau-1}{2}$, $\tilde{\varphi}_t(\tau)$ increases as $\tau$ increases. 
\end{claim}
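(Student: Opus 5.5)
The plan is to reduce the claim to Claim~\ref{clm: psi increases for fixed t} by factoring $\tilde{\varphi}_t(\tau)$ as a product of two positive increasing functions of $\tau$. By \eqref{eq:phit(tau)},
$$\tilde{\varphi}_t(\tau)=t\cdot h_t(\tau)\cdot\tilde{\psi}_t(\tau),\qquad h_t(\tau):=\frac{\tau-2t-1}{\tau+1+2t(t+1)}.$$
The constant factor $t>0$ is harmless.

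\textbf{Step 1: the factor $h_t$.} A one-line quotient-rule computation gives
$$h_t'(\tau)=\frac{(\tau+1+2t^2+2t)-(\tau-2t-1)}{(\tau+1+2t(t+1))^2}=\frac{2(1+t)^2}{(\tau+1+2t(t+1))^2}>0.$$
Moreover $h_t(\tau)>0$ exactly when $\tau>2t+1$, which is the standing hypothesis $t<(\tau-1)/2$. Note that this hypothesis persists as $\tau$ increases with $t$ fixed, so the whole ray of $\tau$ values under consideration stays admissible. Hence $h_t$ is positive and increasing there.

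\textbf{Step 2: the factor $\tilde{\psi}_t$.} By Claim~\ref{clm: psi increases for fixed t}, $\tilde{\psi}_t$ is increasing in $\tau$. In the regime where the claim is used, $0<t<\frac{\tau^2-2\tau-1}{3\tau+1}$, Remark~\ref{rmk: tau1=tau2} gives $\tilde{\psi}_t(\tau)>0$. Because $\tilde{\psi}_t$ is increasing, once it is positive at some $\tau_\star$ it remains positive for all $\tau\ge\tau_\star$. On that ray, a product of two positive increasing functions is increasing, so $\tilde{\varphi}_t(\tau)\ge\tilde{\varphi}_t(\tau_\star)$ for $\tau\ge\tau_\star$, which is exactly what the subsequent argument needs.

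\textbf{Step 3: the full range, and the expected obstacle.} The main obstacle is that the claim as stated only assumes $t<(\tau-1)/2$. On that larger range $\tilde{\psi}_t$ may be nonpositive, and the product argument then fails. To cover this case I would first cancel the factor $(\tau-2t-1)$ against the denominator of \eqref{eq:psit(tau)}, obtaining
$$\tilde{\varphi}_t(\tau)=\frac{t\bigl(\tau^2-(2+3t)\tau-1-t\bigr)}{(\tau+1+t)\bigl(\tau+1+2t(t+1)\bigr)}.$$
This is a ratio of two monic quadratics in $\tau$ with the same leading coefficient. Differentiating and collecting $N'D-ND'$ gives a quadratic in $\tau$ whose leading coefficient is positive, namely $t$ times $\bigl[(t+1)+2t(t+1)+(3t+2)\bigr]$. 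I would then show this quadratic is positive for $\tau>2t+1$, either by a discriminant check as in Claim~\ref{clm: psi increases for fixed t} or by checking that its vertex lies left of $2t+1$ and its value at $\tau=2t+1$ is positive.

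If that computation turns out to be messy or the sign fails, I would fall back on stating the claim on the ray where $\tilde{\psi}_t>0$, since that is the only range used for Theorem~\ref{thm:hky triple intersection}.
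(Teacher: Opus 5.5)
Your Steps 1 and 2 are correct, and they take a route the paper does not. Factoring $\tilde{\varphi}_t=t\,h_t\,\tilde{\psi}_t$ and reducing to Claim~\ref{clm: psi increases for fixed t} is cleaner than differentiating the full rational function. The ray on which $\tilde{\psi}_t>0$ also contains the anchors $(\tau_\star,t_\star)=(6.96,0.55)$ and $(32.8,2.5)$ used later, so Steps 1--2 already cover what is needed downstream.

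But the claim as stated covers every $\tau>2t+1$. Where $\tilde{\psi}_t\le0$, the product rule gives $h_t'\tilde{\psi}_t+h_t\tilde{\psi}_t'$ with terms of opposite sign, so the factorization says nothing there. That range is exactly what the paper's proof handles. Your Step~3 only announces the computation without doing it, and your fallback of restricting to $\tilde{\psi}_t>0$ would prove a weaker statement, not the claim.

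To close the gap you must actually sign the numerator of the derivative. Of your two suggestions, the discriminant check cannot work. That numerator is
\[
n_t(\tau)=2t(t+1)\bigl[(t+2)\tau^2+2(t^2+t+1)\tau-t(3t^2+4t+2)\bigr],
\]
which is negative at $\tau=0$, so it has a positive real root and a positive discriminant.

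Your second suggestion works and is essentially the paper's argument. The coefficients of $\tau^2$ and $\tau$ are positive, so the vertex lies at negative $\tau$ and $n_t$ is increasing on $[0,\infty)$. Moreover
\[
n_t(2t+1)=10t^5+38t^4+54t^3+34t^2+8t>0,
\]
hence $n_t>0$ on $[2t+1,\infty)$.

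Your leading coefficient also drops a $1$. It is $t\bigl[(1+t)+(1+2t(t+1))+(3t+2)\bigr]=2t(t+1)(t+2)$, not $t\bigl[(t+1)+2t(t+1)+(3t+2)\bigr]$; the sign is unaffected.

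With this computation, Step~3 alone proves the claim.
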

\begin{proof}
    Fix $0<t<\frac{\tau-1}{2}$. 
    Recall from equation \eqref{eq:phit(tau)} that
    \begin{align*}
        \tilde{\varphi}_t(\tau) &= \frac{t(\tau^2-2\tau-1)-t^2(3\tau+1)}{(\tau+1)^2+3(\tau+1)t+2(\tau+2)t^2+2t^3} \\
        &= \frac{t\tau^2+(-3t^2-2t)\tau+(-t^2-t)}{\tau^2+(2t^2+3t+2)\tau +(2t^3+4t^2+3t+1)}.
    \end{align*}
    Taking the derivative with respect to $\tau$,
    \begin{align*}
        \tilde{\varphi}_t'(\tau) &= 
        \frac{\splitfrac{\lb \tau^2+(2t^2+3t+2)\tau + (2t^3+4t^2+3t+1)\rb \lb 2t\tau +(-3t^2-2t)\rb }{-\lb t\tau^2+(-3t^2-2t)\tau+(-t^2-t)\rb\lb 2\tau +(2t^2+3t+2)\rb }}{\lb \tau^2+(2t^2+3t+2)\tau +(2t^3+4t^2+3t+1)\rb^2} \\
        &= \frac{(2t^3+6t^2+4t)\tau^2 +(4t^4+8t^3+8t^2+4t)\tau+(-6t^5-14t^4-12t^3-4t^2)}{\lb\tau^2+(2t^2+3t+2)\tau +(2t^3+4t^2+3t+1)\rb^2}.
    \end{align*}
    Hence, $\tilde{\varphi}_t'(\tau)$ is positive when the numerator $$n_t(\tau) = (2t^3+6t^2+4t)\tau^2 +(4t^4+8t^3+8t^2+4t)\tau+(-6t^5-14t^4-12t^3-4t^2)$$ is positive.
    Note that $n_t(\tau)$ is an upward-facing parabola and $n_t(0)=-6t^5-14t^4-12t^3-4t^2<0$ for $0<t$. 
    Thus, if $n_t(\rho)>0$ for some $\rho>0$, then $n_t(\tau)>0$ for all $\tau \geq \rho$. 
    Observe that $0<t<\frac{\tau-1}{2}$ is equivalent to $1<2t+1<\tau$. 
    Then 
    \begin{align*}
        n_t(\tau) &= (2t^3+6t^2+4t)\tau^2 +(4t^4+8t^3+8t^2+4t)\tau+(-6t^5-14t^4-12t^3-4t^2) \\
        &> (2t^3+6t^2+4t)(2t+1)^2 +(4t^4+8t^3+8t^2+4t)(2t+1)+(-6t^5-14t^4-12t^3-4t^2) \\
        &= 10t^5+38t^4+54t^3+34t^2+8t 
        >0.
    \end{align*}
    Thus for fixed $t$ the function $\tilde{\varphi}_t(\tau)$ increases as $\tau$ increases.
\end{proof}

We now have the prerequisite tools to prove that the gap set of sufficiently thick $C$ with sufficiently small largest gap contains an interval including $0$.

\begin{proof}[Proof of Theorem \ref{thm:hky triple intersection}]
    Let $C$ be a compact set of thickness 
    $$\tau:=\tau(C)\geq 6.96,$$
    where, by similarity invariance of thickness, we normalize $\conv(C)=[0,1]$.
    Let the largest gap $G$ satisfy $|G| \leq 0.067$ and take $s\in (0,0.435]$.
    We will apply Proposition \ref{hky corollary 6} to the sets 
    $$C_1 = C -s, \quad C_2= C+s,\quad  \text{and}\quad C_3 = C$$ using the function $\tilde{\psi}_t(\tau)$ from Lemma \ref{lem:HKY lemma 5 simplified} to obtain $C_1\cap C_2\cap C_3\neq \varnothing$; in particular, that $G_\text{AP}^3(C) \supset (0,0.435]$.
    We do so by comparing against the anchor values $\tau_\star = 6.96$, $|G_\star|=0.067$, and $t_\star=0.55$ and using Claims \ref{clm: psi increases for fixed t} and \ref{clm: varphi increases for fixed t} to extend to sets of larger thickness.

    First, note that $C_1$ and $C_2$ are interleaved because $s<1/2$ and because $Q=[s,1-s]$, we know $Q$ contains neither $C_1$ nor $C_2$.

    (i) Observe that $\conv(C_3) = [0,1]\supset [s,1-s] = Q$. 

    (ii) Next, we want $|G|<\tilde{\psi}_{t_\star}(\tau) |Q|$ which is equivalent to 
    $$\tilde{\psi}_{t_\star}(\tau) > \frac{|G|}{1-2s},$$
    which holds for our chosen $t_\star$, $|G_\star|$, and $s\in(0,0.435]$ as 
    \begin{equation}\label{eq:triple int cor6 (ii)}
        \tilde{\psi}_{t_\star}(\tau) \geq \tilde{\psi}_{t_\star}(\tau_\star) = 0.51954\ldots \geq 0.51538\ldots = \frac{|G_\star|}{1-2s} \geq \frac{|G|}{1-2s}.
    \end{equation}

    (iii) Because $\tau(C_3)= \tau \geq 6.96$, the condition that $\tau(C_3)\tilde{\varphi}_{t_\star}(\tau(C_1),\tau(C_2))\geq1$ is equivalent to 
    \begin{equation*}
        \tilde{\varphi}_{t_\star}(\tau) \geq \frac{1}{\tau},
    \end{equation*}
    which is true by our choice of $t_\star$ and $\tau$; in particular, Claim \ref{clm: varphi increases for fixed t} gives
    \begin{equation*}\label{eq: cor6 (iii)}
        \tilde{\varphi}_{t_\star}(\tau) \geq \tilde{\varphi}_{t_\star}(\tau_\star)= 0.14368\ldots \geq 0.14367\ldots = \frac{1}{\tau_\star} \geq \frac{1}{\tau}.
    \end{equation*}
    
    Thus Proposition \ref{hky corollary 6} gives $\lp C -s\rp \cap C\cap \lp C+s\rp \neq \varnothing$ for all $s\in (0,0.435]$; hence, 
    $$G_{AP}^3(C)\supset (0,0.435].$$
    Scaling back by $d$ proves our theorem in full generality.
\end{proof}

\begin{proof}[Proof of Corollary \ref{cor:asymmetric patterns}]
    This follows from the proof above, though this time we apply Proposition \ref{hky corollary 6} to the sets
    $$C_1= C+s\theta, \quad C_2 = C-s(1-\theta), \quad \text{and}\quad C_3=C,$$
    where $s\in[0,0.87]$ and $\theta \in (0,1)$.

    The minor changes that follow are that $Q=[s\theta,1-s(1-\theta)]$ from which $|Q|=1-s$; this changes equation \eqref{eq:triple int cor6 (ii)} to 
    \begin{equation*}
        \tilde{\psi}_{t_\star}(\tau) \geq \tilde{\psi}_{t_\star}(\tau_\star) = 0.51954\ldots \geq 0.51538\ldots = \frac{|G_\star|}{1-s} \geq \frac{|G|}{1-s},
    \end{equation*}
    from which the doubled range of $s$ comes from.
\end{proof}

Lastly, we go one step beyond Theorem \ref{thm:hky triple intersection} to establish and quantify the existence of quadruple arithmetic progressions.

\begin{proof}[Proof of Theorem \ref{thm:hky quadruple intersection}]
    To apply Proposition \ref{prop:four-AP-HKY-general}, we need only find a set $C$ of sufficiently large thickness $\tau:=\tau(C)$ such that $$\tilde{\varphi}_t(\tau)\geq 1.$$
    We claim that any Cantor set $C\subset \R$ with normalized length $\conv(C)=[0,1]$ and thickness 
    $$\tau:=\tau(C) \geq \frac{1-0.015}{2(0.015)}= 32.83333\ldots$$
    is sufficient. 

    Consider the anchor thickness $\tau_\star := 32.8$ and $t$-value $t_\star:= 2.5$. 
    Observe that 
    $$0<t_\star<\frac{\tau_\star^2-2\tau_\star-1}{3\tau_\star+1} \leq\frac{\tau^2-2\tau-1}{3\tau+1},$$
    and
    \begin{align*}
         0.78197\ldots =\tilde{\psi}_{t_\star}(\tau_\star) \leq \tilde{\psi}_{t_\star}(\tau) \quad \text{ and }\quad  1.02129\ldots = \tilde{\varphi}_{t_\star}(\tau_\star) \leq \tilde{\varphi}_{t_\star}(\tau).
    \end{align*}
    Thus, Proposition \ref{prop:four-AP-HKY-general} applies and we conclude $$[0,0.3]\subset G_{AP}^4(C)$$ where the upper bound of the interval comes from 
    \begin{align*}
        0.30498\ldots = \frac{\tilde{\psi}_{t_\star}(\tau_\star)}{1+2\tilde{\psi}_{t_\star}(\tau_\star)} \leq \frac{\tilde{\psi}_{t_\star}(\tau)}{1+2\tilde{\psi}_{t_\star}(\tau)}.
    \end{align*}
\end{proof}

\section{Concluding remarks}

These results exhibit two complementary mechanisms for progression scale sets.
Self-similar first splitting creates recursively scaled blackout intervals,
while thickness and thick-intersection arguments create intervals of
admissible scales.  The level-two computation shows that product geometry can
improve a purely first-level estimate without requiring a full symbolic
description of the Cantor set.

Several natural problems remain.  The most immediate is to determine the
first-split spectrum $B_\eps$ exactly, or to compute a tractable level-three
outer approximation.  This could lower the blackout threshold below
$(4-\sqrt{13})/3$.  It would also be useful to optimize the numerical HKY
parameters and to develop analogous recursive descriptions for longer or
asymmetric patterns.

\appendix
\section{Endpoint certificate for the level-two calculation}
\label{app:level-two-certificate}

This appendix gives a direct check of the six rows used in Proposition~\ref{prop:level-two-first-split}.  
Write
$$
A=I_{00},\qquad B=I_{01},\qquad C=I_{10},\qquad D=I_{11},
$$
and identify each interval with its left endpoint
$$
0,\qquad \lambda-\lambda^2,\qquad 1-\lambda,\qquad 1-\lambda^2,
$$
respectively; all four intervals have length $L=\lambda^2$.
For a triple of left endpoints $(p,q,r)$, formula \eqref{eq:three-interval-feasibility} gives the lower candidates
$$
q-p-L,\qquad \frac{r-p-L}{2},\qquad r-q-L,\qquad0
$$
and the upper candidates
$$
q-p+L,\qquad \frac{r-p+L}{2},\qquad r-q+L.
$$
The following comparisons hold throughout $1/3\le\lambda<1/2$.

\begin{enumerate}[leftmargin=2.2em,label=\textup{(\roman*)}]
\item For $(A,A,C)$, the maximum lower candidate is $c_\lambda=1-\lambda-\lambda^2$ and the minimum upper candidate is $\lambda^2$.  
The interval is nonempty exactly when $c_\lambda\le\lambda^2$, equivalently $\lambda\ge1/2$.

\item For $(A,A,D)$, the endpoints are
$1-2\lambda^2$ and $\lambda^2$.
Nonemptiness is equivalent to $\lambda\ge1/\sqrt3$, so this row is empty in the range under study.

\item For $(A,B,C)$, the lower endpoint is $\max\{\eps,a_\lambda\}$ and the upper endpoint is $\min\{\lambda,b_\lambda\}$.  
The simultaneous equalities $a_\lambda=\eps$ and $b_\lambda=\lambda$ reduce to $\lambda^2-3\lambda+1=0$, whose root in $(0,1/2)$ is $\rho=(3-\sqrt5)/2$.  
Hence this row contributes $[\eps,\lambda]$ for $\lambda\le\rho$ and $[a_\lambda,b_\lambda]$ for $\lambda\ge\rho$.

\item For $(A,B,D)$, the maximum lower candidate is $c_\lambda$ and the minimum upper candidate is $\lambda$.  
Thus the contribution is $[c_\lambda,\lambda]$, nonempty exactly when $1-2\lambda-\lambda^2\le0$, or $\lambda\ge\eta=\sqrt2-1$.

\item For $(B,B,C)$, the maximum lower candidate is $\eps=1-2\lambda$ and the minimum upper candidate is $\lambda^2$.
Hence the contribution is $[\eps,\lambda^2]$, again nonempty exactly when $\lambda\ge\eta$.

\item For $(B,B,D)$, the calculation is the same as in (i), and there is no contribution for $\lambda<1/2$.
\end{enumerate}

It remains only to merge the three possible nonempty intervals.  
For $\rho<\lambda<\eta$, only $[a_\lambda,b_\lambda]$ remains.  
For $\eta\le\lambda<\kappa$, their order is
$$
\eps\le\lambda^2<a_\lambda\le b_\lambda<c_\lambda\le\lambda.
$$
The two strict gaps close simultaneously when
$$
\lambda^2=a_\lambda
\quad\Longleftrightarrow\quad
b_\lambda=c_\lambda
\quad\Longleftrightarrow\quad
3\lambda^2+\lambda-1=0,
$$
namely at $\kappa=(\sqrt{13}-1)/6$.  
For $\lambda\ge\kappa$ the three intervals overlap and fill $[\eps,\lambda]$.  
This independently reproduces all four cases of Proposition~\ref{prop:level-two-first-split}.

\section*{Acknowledgements}
A.~Y. is supported in part by the Natural Sciences and Engineering Research Council of Canada, NSERC (GR030571 and GR030540).
S.S. is supported in part by NSF DMS-2231565
K.T. is supported in part by the Simons Foundation Grant GR137264.

\end{document}